\documentclass[12pt,french,brochure]{bourbaki}

\usepackage{epsfig,graphics}
\usepackage{graphicx}
\usepackage[utf8]{inputenc}
\usepackage{eucal}
\usepackage[obeyspaces]{url}
\usepackage{babel}
\usepackage{pstricks}

\renewcommand{\H}{\mathbf{H}}
\newcommand{\Q}{\mathbf{Q}}
\newcommand{\N}{\mathbf{N}}
\newcommand{\R}{\mathbf{R}}
\newcommand{\T}{\mathbf{T}}
\newcommand{\bP}{\mathbf{P}}
\newcommand{\C}{\mathbf{C}}

\newcommand{\Z}{\mathbf{Z}}
\newcommand{\SL}{\mathrm{SL}}
\newcommand{\PSL}{\mathrm{PSL}}

\newtheorem{ques}[defi]{Question}

\date{Juin 2012}
\bbkannee{64\`eme ann\'ee, 2011-12} 
\bbknumero{1055}
\title{La conjecture des sous-groupes de surfaces}
\subtitle{d'apr\`es Jeremy Kahn et Vladimir Markovic}

\author{Nicolas BERGERON}
\address{Universit\'e Pierre et Marie Curie et \\ Institut Universitaire de France\\
4 place Jussieu \\
F-75005 PARIS}
\email{bergeron@math.jussieu.fr}

\begin{document}
\maketitle

Une vari\'et\'e hyperbolique est une vari\'et\'e riemannienne, lisse, connexe, compl\`ete et de courbure sectionnelle constante \'egale \`a $-1$. On note $\H_3$ l'unique -- \`a isom\'etrie
pr\`es -- vari\'et\'e hyperbolique simplement connexe de dimension $3$. Dans ce rapport, on utilise le mod\`ele du demi-espace $\H_3 = (\C \times \R_+^* , \frac{|dz|^2 + dt^2}{t^2})$.
L'extension de Poincar\'e de l'action par homographies de $\PSL_2 (\C)$ sur $\bP_1(\C)$ fournit une action de $\PSL_2 (\C)$ sur $\H_3$ par isom\'etries et donc -- {\it via} la diff\'erentielle -- une action sur le fibr\'e des rep\`eres. Cette action est simplement transitive; le choix d'un point base
$p_0=(0,1) \in \H_3$ et de deux vecteurs tangents unitaires orthogonaux
$\vec{u}_0 = (0,1)$ et $\vec{n}_0 =(1,0)$ permet donc d'identifier $\PSL_2 (\C)$ au fibr\'e des rep\`eres de $\H_3$ {\it via} l'application $g \mapsto g \cdot (p_0 , \vec{u}_0, \vec{n}_0 , 
\vec{u}_0 \wedge \vec{n}_0)$.
 
Sauf mention contraire, dans tout ce rapport le terme {\it vari\'et\'e} d\'esignera dor\'enavant une vari\'et\'e lisse, compacte, connexe, orientable et sans bord, et {\it surface} une vari\'et\'e de dimension $2$. Un choix indiff\'erent de point base est effectu\'e lorsque l'on parle du groupe fondamental d'un espace connexe par arc, et un choix appropri\'e de points bases est effectu\'e lorsque l'on parle de morphisme entre groupes fondamentaux.   

\section{Introduction : la conjecture VH et ses amies}

Soit $M$ une vari\'et\'e hyperbolique de dimension $3$. On conjecture tour \`a tour les \'enonc\'es suivants (voir \cite[p. 380]{Thurston})~:

\begin{enumerate}
\item Le groupe fondamental de $M$ contient un sous-groupe isomorphe au groupe fondamental d'une surface de genre au moins $2$. 
\item La vari\'et\'e $M$ poss\`ede un rev\^etement fini qui contient une surface {\it plong\'ee} de sorte que l'inclusion induise une injection au niveau des groupes fondamentaux. 
\item La vari\'et\'e $M$ poss\`ede un rev\^etement fini dont le premier nombre de Betti est non nul.
\item Pour tout entier $n$, la vari\'et\'e $M$ poss\`ede un rev\^etement fini dont le premier nombre de Betti est sup\'erieur \`a $n$.
\item La vari\'et\'e $M$ poss\`ede un rev\^etement fini dont le groupe fondamental se surjecte sur un groupe libre de rang $2$. 
\item La vari\'et\'e $M$ poss\`ede un rev\^etement fini qui fibre sur le cercle.
\end{enumerate}

Il n'est pas difficile de v\'erifier que\footnote{L'implication $(2) \Rightarrow (1)$ est la plus d\'elicate,
voir \cite[Lem. 6.6]{Hempel}.} 
$$\begin{array}{rcl}
(5) \Rightarrow (4)  \Rightarrow & (3) & \Rightarrow (2) \Rightarrow (1) \\
& \Uparrow  & \\
& (6) & 
\end{array} $$

La conjecture VH (pour virtuellement Haken) est l'\'enonc\'e (2). 
L'objet de ce rapport est d'expliquer les grandes id\'ees de la d\'emonstration, par Jeremy Kahn et Vladimir Markovic \cite{KM1}, de la conjecture (1)~:

\begin{theo}[Kahn-Markovic] \label{T1}
Soit $M$ une vari\'et\'e hyperbolique de dimension $3$, alors il existe une surface $S$ de genre $g \geq 2$ et une injection 
$$\pi_1 S \hookrightarrow \pi_1 M.$$
\end{theo}

\subsubsection*{Commentaires}

1. Dans une pr\'epublication r\'ecente, Dani Wise d\'emontre notamment l'implication $(2) \Rightarrow (6)$. Son r\'esultat est plus g\'en\'eral, nous revenons sur ses travaux au \S \ref{par:6.1}. Tout r\'ecemment Ian Agol
a fait circuler une pr\'epublication dans laquelle, en se reposant sur les travaux de Wise et de Kahn et Markovic, il d\'emontre les six conjectures ci-dessus.

2. Dans le cas o\`u $M$ est {\it arithm\'etique}, le th\'eor\`eme \ref{T1} est d\^u \`a Marc Lackenby \cite{Lackenby}.

3. Dans le cas o\`u $M$ est une vari\'et\'e hyperbolique de dimension $3$ de volume fini mais {\it non compacte}, Daryl Cooper, Darren Long et Alan Reid \cite{CLR} d\'emontrent (5) et les r\'esultats
de Wise s'appliquent encore pour d\'emontrer (6).

4. Dans un cadre plus g\'en\'eral, Mikhail Gromov pose~:

\begin{ques}
Soit $\Gamma$ un groupe hyperbolique au sens de Gromov qui ne contient pas un sous-groupe libre d'indice fini. Le groupe $\Gamma$
contient-il un sous-groupe isomorphe au groupe fondamental d'une surface de genre au moins $2$ ?
\end{ques}

On renvoie \`a \cite{Calegari} pour des travaux r\'ecents en lien avec cette question.

\section{Construction d'une surface \`a partir de pantalons}

Dans toute cette partie, $M = \Gamma \backslash \H_3$ est une vari\'et\'e hyperbolique de dimension $3$ uniformis\'ee, o\`u $\Gamma$ est un sous-groupe discret et sans torsion du groupe $\PSL_2 (\C)$. De cette mani\`ere, on identifie le fibr\'e des rep\`eres de $M$ \`a $\Gamma \backslash \PSL_2 (\C)$.

\begin{defi}
Soit $S$ une surface dont on fixe un rev\^etement universel $\widetilde{S}\rightarrow S$, de groupe de rev\^etement $\pi_1 (S)$. Une structure de {\rm $\PSL_2 (\C)$-surface} sur $S$ est la donn\'ee d'une immersion $F : \widetilde{S} \rightarrow \H_3$ et d'une repr\'esentation $\rho \in \mathrm{Hom} (\pi_1 (S) , \PSL_2 (\C))$ telles que $F$ soit  \'equivariante relativement \`a $\rho$. Deux structures de $\PSL_2 (\C)$-surfaces $(F_1 , \rho_1)$ et $(F_2 , \rho_2)$ sur $S$ sont {\rm \'equivalentes} s'il existe un \'el\'ement $g$ de 
$\PSL_2 (\C)$ tel que $F_2 = g F_1$ et $\rho_2 = g \rho_1 g^{-1}$. 
\end{defi}

Par analogie avec la notion de $(G,X)$-structure, on dira que $F$ est l'{\it application d\'eveloppante} et $\rho$ le {\it morphisme d'holonomie} de la structure de $\PSL_2 (\C)$-surface. Le groupe de surface du 
th\'eor\`eme \ref{T1} sera obtenu comme groupe de monodromie d'une structure de $\PSL_2 (\C)$-surface sur une surface $S$ immerg\'ee dans $M$. On aura \'egalement besoin de consid\'erer des surfaces \`a bord; on demande alors que l'application d\'eveloppante envoie chaque composante de bord sur une g\'eod\'esique.

\subsection{L'espace des futals}

Fixons $\Pi$ un pantalon orient\'e (\`a bord). 
On num\'erote $C_1$, $C_2$ et $C_3$ les composantes de bord orient\'ees de $\Pi$
et on choisit un \'el\'ement $c_n \in \pi_1 (\Pi)$ dans la classe de
conjugaison correspondant \`a chaque composante de bord $C_n$ pour $n=1,2,3$, de sorte que 
$$c_1 c_2 c_3 = \mathrm{id}.$$

Rappelons qu'un \'el\'ement $A \in \PSL_2 (\C)$ -- vu comme isom\'etrie de $\H_3$ -- a une {\it longueur de translation complexe} $\ell (A) \in (\R_+^* + i \R) / 2i\pi \Z$, telle que\footnote{On prendra garde au fait que la trace de $A$ n'est d\'efinie qu'au signe pr\`es et que la demi-longueur $\ell (A)/2$ n'est d\'efinie que modulo $i\pi$.} $\mathrm{trace} (A) = \pm 2 \cosh ( \ell (A)/2)$. 

\'Etant donn\'e un morphisme $\rho : \pi_1 (\Pi ) \rightarrow \PSL_2 (\C)$, on note $\ell_n= \ell_n (\rho)$ les longueurs de translation complexes des $\rho (c_n)$ pour $n=1,2,3$.

Kourouniotis \cite[Prop. 1.6]{Kourouniotis} montre que trois nombres complexes $\sigma_n \in (\R_+^* + i \R) / 2i\pi \Z$ o\`u $n=1,2,3$, d\'eterminent deux hexagones gauches \`a angles droits isom\'etriques, mais d'orientations oppos\'ees, dont trois c\^ot\'es non-adjacents sont de longueurs complexes $\hat{\sigma}_1$, $\hat{\sigma}_2$, $\hat{\sigma}_3$ pour un choix de $\hat{\sigma}_n \in \{
\sigma_n , \sigma_n + i \pi \}$. En recollant ces deux hexagones on munit le pantalon $\Pi$ d'une structure de $\PSL_2 (\C)$-surface de repr\'esentation d'holonomie $\rho : \pi_1 (\Pi) \rightarrow \PSL_2 (\C)$ telle que $\ell_n (\rho) = 2 \sigma_{n}$ pour $n=1,2,3$.
R\'eciproquement, si $\rho$ est le morphisme d'holonomie d'une structure de $\PSL_2 (\C)$-surface sur $\Pi$ et si les extr\'emit\'es des axes de translation des $\rho (c_n )$ pour $n=1,2,3$ sont deux \`a deux distinctes alors $\rho$ est conjugu\'ee \`a une repr\'esentation d'holonomie associ\'ee comme ci-dessus \`a trois {\it demi-longueurs} $\sigma_n = \sigma_n (\rho ) \in (\R_+^* + i \R) / 2i\pi \Z$ o\`u $n=1,2,3$. 

\begin{rema} \label{rem:1}
1. Quitte \`a conjuguer $\rho$ dans $\PSL_2 (\C)$, on peut supposer que 
l'axe de translation de $\rho (c_1)$ est la g\'eod\'esique (orient\'ee) $i \R_+^*$ de $\H_3$ qui va de $0$ \`a l'infini. Les perpendiculaires communes \`a $i\R_+^*$ et aux axes de translation des $\rho (c_n)$ o\`u $n=2,3$, d\'eterminent deux \'el\'ements $v_-$ et $v_+$, de points bases $p_-$ et $p_+$, du fibr\'e unitaire normal \`a $i \R_+^*$ que l'on num\'erote de sorte que l'orientation du segment $[p_- , p_+]$ co\"{\i}ncide avec l'orientation de la g\'eod\'esique $i\R_+^*$. Alors l'homographie $z \mapsto e^{\sigma_1} z$ est l'unique isom\'etrie directe de $\H_3$ qui pr\'eserve l'axe $i\R_+^*$ et envoie $v_-$ sur $v_+$. Cela d\'etermine l'\'el\'ement $\sigma_1 \in (\R_+^* + i \R) / 2i\pi \Z$, on proc\`ede de m\^eme pour $\sigma_2$ et $\sigma_3$. 

2. Toute repr\'esentation $\rho$ comme ci-dessus admet un relev\'e $\tilde{\rho}$ \`a $\SL_2 (\C)$ tel que 
$\mathrm{trace} (\tilde{\rho}(c_n) ) = -2 \cosh \sigma_n$ pour $n=1,2,3$. Les $\sigma_n$ d\'eterminent la classe de $\SL_2 (\C)$-conjugaison de $\tilde{\rho}$.
\end{rema}

\begin{defi}
1. Un {\rm futal} dans $M$ est la classe de conjugaison $\mathbf{\Pi} = [\rho]$ dans 
$\Gamma$ d'un morphisme injectif $\rho : \pi_1 (\Pi^0 ) \rightarrow \Gamma \subset \PSL_2 (\C )$ tel que $\rho$ est l'holonomie d'une structure de $\PSL_2 (\C)$-surface sur $\Pi$, les extr\'emit\'es des axes de translation des $\rho (c_n )$ pour $n=1,2,3$ sont deux \`a deux distinctes et $\rho$ v\'erifie  
\begin{equation} \label{eq:1}
\sigma_n (\rho) = \frac{1}{2} \ell_n (\rho), \quad n=1, 2,3.
\end{equation}
Dans la suite on choisira toujours des repr\'esentants $\ell_n \in \R_+^* + i \R$ tels que $-\pi <  \mathrm{Im} (\ell_n ) \leq \pi$. 

2. Un {\rm futal marqu\'e} dans $M$ est la donn\'ee $(\mathbf{\Pi} , g^*)$ d'un futal et d'une g\'eod\'esique ferm\'ee orient\'ee $g^*$ dans $M$ qui repr\'esente la classe
de conjugaison de $\rho (c_n)$ pour un certain $n=1,2,3$. 
\end{defi}

On note $\mathcal{P}$ l'ensemble des futals marqu\'es dans $M$ muni de la topologie discr\`ete. Le groupe $\Gamma$ op\`ere, \`a gauche diagonalement et par conjugaison, sur $\Gamma^3$ et un futal marqu\'e $(\mathbf{\Pi} , g^*)$ est uniquement d\'etermin\'e par la classe de conjugaison du triplet ordonn\'e $(\gamma_1 , \gamma_2 , \gamma_3 ) = ( \rho (c_n) , \rho (c_{n+1}) , \rho (c_{n+2})) \in \Gamma^3$, o\`u l'entier $n$, consid\'er\'e modulo $3$, est tel que $g^*$ repr\'esente la classe de conjugaison de $\rho (c_n)$. Un tel triplet v\'erifie en outre la relation 
\begin{equation} \label{eq:21}
\gamma_1 \gamma_2 \gamma_3 = \mathrm{id}
\end{equation}
qui est pr\'eserv\'ee par l'action par conjugaison de $\Gamma$. Notons 
$$\widehat{\mathcal{P}} = \Gamma \backslash \left\{ (\gamma_1 , \gamma_2 , \gamma_3 ) \in \Gamma^3 \; : \; \gamma_1 \gamma_2 \gamma_3 = \mathrm{id} \right\}.$$
On identifiera dor\'enavant $\mathcal{P}$ \`a un sous-ensemble de $\widehat{\mathcal{P}}$.

L'ensemble $\widehat{\mathcal{P}}$ est naturellement muni d'une {\it involution} $\mathcal{R}$ d\'efinie par:
$$\mathcal{R} (\gamma_1,  \gamma_2 , \gamma_3) = (\gamma_1^{-1} , \gamma_3^{-1} , \gamma_2^{-1} )$$
et d'un {\it tricycle} $\mathrm{rot}$ d\'efinie par
$$\mathrm{rot} (\gamma_1 , \gamma_2 , \gamma_3) = (\gamma_2 , \gamma_3 , \gamma_1 ).$$

\begin{rema} 
On peut penser \`a l'involution $\mathcal{R}$ comme \`a l'application qui envoie un futal marqu\'e $(\mathbf{\Pi} , g^* )\in \mathcal{P}$ sur le futal $\mathbf{\Pi}$ muni de l'\og orientation oppos\'ee \fg \ et marqu\'e par la g\'eod\'esique $-g^*$ (c'est-\`a-dire $g^*$ munie de l'orientation oppos\'ee). Le tricycle $\mathrm{rot}$ pr\'eserve \'egalement le sous-ensemble $\mathcal{P}$; il correspond \`a un changement cyclique
du marquage.
\end{rema}

Soit $(\mathbf{\Pi} , g^*) = [\gamma_1 , \gamma_2 , \gamma_3] \in \mathcal{P}$ un futal marqu\'e. On note 
$\sigma = \sigma (\mathbf{\Pi} , g^*) \in (\R_+^* + i \R ) / 2i\pi \Z$ 
la demi-longueur complexe de la composante de bord marqu\'ee~: $\sigma = \frac{1}{2} \ell (\gamma_1 )$.

\subsection{\'Etiquetage des futals et construction d'une surface}

Soit $\mathcal{E}$ un ensemble fini.

\begin{defi}
Un {\rm \'etiquetage l\'egal} est un triplet $(E , \mathcal{R}_E , \mathrm{rot}_E)$, que par abus nous noterons juste $E$, constitu\'e de trois applications $E:\mathcal{E} \rightarrow \mathcal{P}$, $\mathcal{R}_E : \mathcal{E} \rightarrow \mathcal{E}$ et 
$\mathrm{rot}_E : \mathcal{E} \rightarrow \mathcal{E}$ telles que 
\begin{enumerate}
\item l'application $\mathcal{R}_E$ est involutive et v\'erifie  
$$\mathcal{R} \circ E = E \circ \mathcal{R}_E,$$
\item l'application $\mathrm{rot}_E$ est d'ordre $3$ et v\'erifie
$$\mathrm{rot} \circ E = E \circ \mathrm{rot}_{E}.$$
\end{enumerate}
\end{defi}

\'Etant donn\'e un \'etiquetage 
l\'egal $E$ et un \'el\'ement $e \in \mathcal{E}$, on note $(\mathbf{\Pi}_e , g_e^*)$ le futal marqu\'e $E(e)$.

\begin{defi}
Une involution $\tau : \mathcal{E} \rightarrow \mathcal{E}$ est dite {\rm admissible}, relativement \`a un \'etiquetage 
l\'egal $E$, si, pour tout $e$ dans $\mathcal{E}$, on a~:
$$g_{\tau (e)}^* =  - g_e^* .$$
\end{defi}

On associe \`a un \'etiquetage l\'egal $E:\mathcal{E} \rightarrow \mathcal{P}$ et \`a une involution admissible $\tau : \mathcal{E} \rightarrow \mathcal{E}$ un graphe $\mathcal{G}$ trivalent: l'ensemble des sommets est
$\mathcal{E} / \langle \mathrm{rot}_E \rangle$ et deux sommets sont reli\'es par une ar\^ete si et seulement s'ils ont des repr\'esentants dans 
$\mathcal{E}$ \'echang\'es par $\tau$. Quitte \`a ne consid\'erer qu'une composante connexe choisie arbitrairement, nous supposerons dor\'enavant que $\mathcal{G}$ est connexe. Le bord d'un \'epaississement de $\mathcal{G}$ est alors une surface $S$ qui fibre en cercles au-dessus de $\mathcal{G}$ et les cercles au-dessus des milieux des ar\^etes d\'ecoupent $S$ en pantalons orient\'es. On note $\Pi_e$ ($e \in \mathcal{E}$) le pantalon correspondant au sommet $[e]$ dans $\mathcal{E} / \langle \mathrm{rot}_E \rangle$ dont on marque la composante de bord associ\'ee
au milieu de l'ar\^ete joignant les sommets $[e]$ et $[\tau (e)]$. 
L'\'etiquetage l\'egal $E$ associe \`a chaque \'el\'ement $e\in \mathcal{E}$ un futal $\mathbf{\Pi}_e=[\rho_e]$, o\`u $\rho_e$ est le morphisme d'holonomie d'une structure de $\PSL_2 (\C)$-surface $(F_e, \rho_e)$ sur $\Pi_e$, ainsi qu'une g\'eod\'esique ferm\'ee orient\'ee $g_e^* \subset M$ correspondant \`a la composante de bord marqu\'ee de $\Pi_e$. Soit $\sigma_e = \sigma (\mathbf{\Pi}_e , g_e^*) \in  (\R_+^* + i \R) / 2i\pi \Z$. Puisque $\tau$ est admissible, on a~:
\begin{equation} \label{sigmatau}
\sigma_{\tau(e)} = \sigma_{e}.
\end{equation}
La surface \`a bord obtenue en recollant $\Pi_e$ et $\Pi_{\tau (e)}$ selon les composantes de bord marqu\'ees est donc naturellement munie d'une structure de $\PSL_2 (\C)$-surface qui induit sur $\Pi_e$ et $\Pi_{\tau (e)}$ des structures de $\PSL_2 (\C)$-surfaces \'equivalentes \`a $(F_e , \rho_e)$ et $(F_{\tau(e)} , \rho_{\tau(e)})$.\footnote{Noter qu'il est n\'ecessaire pour cela que $\sigma_{\tau(e)} = \sigma_{e}$ plut\^ot que $\sigma_{\tau(e)} = \sigma_{e} +i \pi$.} De cette mani\`ere on obtient la proposition suivante.

\begin{prop}
Pour tout \'etiquetage l\'egal $E:\mathcal{E} \rightarrow \mathcal{P}$ et toute involution admissible $\tau : \mathcal{E} \rightarrow \mathcal{E}$, il existe une surface $S$ munie d'une structure de $\PSL_2 (\C)$-surface de repr\'esentation d'holonomie
$$\rho_{E, \tau} : \pi_1 (S ) \rightarrow \Gamma$$
telle que $S$ est d\'ecoup\'ee en pantalons orient\'es $\Pi_e$ avec une composante de bord marqu\'ee et la structure de $\PSL_2 (\C)$-surface sur $S$ induit sur chaque $\Pi_e$ une structure de $\PSL_2 (\C)$-surface \'equivalente \`a $(F_e , \rho_e)$.
\end{prop}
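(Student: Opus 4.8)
Le plan est de globaliser le recollement de deux pantalons d\'ecrit juste avant l'\'enonc\'e. La surface topologique $S$, sa fibration en cercles au-dessus de $\mathcal{G}$ et sa d\'ecoupe en pantalons $\Pi_e$ \'etant d\'ej\`a construites, il reste \`a munir $S$ d'une structure de $\PSL_2(\C)$-surface induisant sur chaque $\Pi_e$ une structure \'equivalente \`a $(F_e,\rho_e)$, puis \`a en identifier l'holonomie $\rho_{E,\tau}$. Je m'appuierais sur la d\'ecomposition de $\pi_1(S)$ en graphe de groupes dual \`a $\mathcal{G}$, les groupes de sommets \'etant les $\pi_1(\Pi_e)$ et les groupes d'ar\^etes les sous-groupes $\Z$ engendr\'es par les courbes de bord. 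Se donner un morphisme $\rho_{E,\tau}:\pi_1(S)\to\Gamma$ revient alors \`a se donner les holonomies de sommets, fournies par les $\rho_e$, et, pour chaque ar\^ete, un \'el\'ement de $\Gamma$ recollant correctement les deux inclusions de groupe d'ar\^ete.

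Fixons une ar\^ete joignant $[e]$ et $[\tau(e)]$ et notons $\gamma_e=\rho_e(c_e)\in\Gamma$ l'holonomie du bord marqu\'e de $\Pi_e$, de sorte que $\sigma_e=\frac{1}{2}\ell(\gamma_e)$ et que $g_e^*$ est la projection dans $M$ de l'axe orient\'e de $\gamma_e$. L'admissibilit\'e de $\tau$ donne $g_{\tau(e)}^*=-g_e^*$~: les \'el\'ements $\gamma_{\tau(e)}$ et $\gamma_e^{-1}$ ont des axes orient\'es se projetant sur la m\^eme g\'eod\'esique ferm\'ee orient\'ee $-g_e^*$ de $M$. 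Leurs relev\'es \`a $\H_3$ appartiennent donc \`a une m\^eme $\Gamma$-orbite, d'o\`u l'existence d'un $h\in\Gamma$ envoyant l'axe de $\gamma_{\tau(e)}$ sur celui de $\gamma_e^{-1}$. Joint \`a l'\'egalit\'e des demi-longueurs complexes (\ref{sigmatau}), $\sigma_{\tau(e)}=\sigma_e$, qui force l'\'egalit\'e des longueurs de translation, ceci entra\^{\i}ne $h\,\gamma_{\tau(e)}\,h^{-1}=\gamma_e^{-1}$~: c'est exactement la donn\'ee de recollement cherch\'ee, l'inversion traduisant l'opposition des orientations. On voit ici pourquoi il faut $\sigma_{\tau(e)}=\sigma_e$ et non $\sigma_{\tau(e)}=\sigma_e+i\pi$, cette derni\`ere \'egalit\'e d\'ecalant de $\pi$ la partie rotationnelle de la longueur complexe et interdisant l'identification isom\'etrique le long de la g\'eod\'esique commune.

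L'assemblage se fait alors sans difficult\'e suppl\'ementaire. On choisit un arbre maximal $T$ de $\mathcal{G}$ et l'on d\'eveloppe tous les pantalons dans $\H_3$ en propageant le long de $T$, chaque ar\^ete de $T$ d\'eterminant, via l'\'el\'ement $h\in\Gamma$ ci-dessus, la copie de $F_{\tau(e)}$ qui se raccorde \`a $F_e$ le long de l'axe commun; comme chaque $F_e$ est une immersion et que le raccord se fait le long d'une g\'eod\'esique, on obtient une immersion de l'union d\'evelopp\'ee. Les ar\^etes hors de $T$ ne cr\'eent aucune obstruction~: la d\'efinition d'une structure de $\PSL_2(\C)$-surface ne requiert qu'une d\'eveloppante immersive \'equivariante, sans condition de fermeture, de sorte que la lettre stable d'une telle ar\^ete est simplement envoy\'ee sur l'\'el\'ement de recollement $h\in\Gamma$ correspondant. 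Le morphisme $\rho_{E,\tau}$ ainsi obtenu est \`a valeurs dans $\Gamma$, puisque les $\rho_e$ et tous les \'el\'ements de recollement le sont, et sa restriction \`a chaque $\pi_1(\Pi_e)$ co\"{\i}ncide avec $\rho_e$ \`a conjugaison pr\`es dans $\Gamma$. L'unique point r\'eellement d\'elicat est donc la coh\'erence m\'etrique d'un recollement d'ar\^ete, reposant sur la conjonction de l'admissibilit\'e de $\tau$ (qui place les axes dans une m\^eme $\Gamma$-orbite, garantissant $h\in\Gamma$) et de l'\'egalit\'e (\ref{sigmatau}) des demi-longueurs (qui assure que $h$ conjugue bien $\gamma_{\tau(e)}$ sur $\gamma_e^{-1}$); la pr\'esence d'une holonomie non triviale rend ensuite l'assemblage le long des cycles de $\mathcal{G}$ automatique.
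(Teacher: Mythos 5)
Votre plan global est celui du texte~: la surface $S$ est le bord d'un \'epaississement du graphe trivalent $\mathcal{G}$ et la structure s'assemble pantalon par pantalon le long des bords marqu\'es, le formalisme de graphe de groupes et d'arbre maximal que vous explicitez \'etant laiss\'e implicite dans l'article. La construction du recollement $h\,\gamma_{\tau(e)}\,h^{-1}=\gamma_e^{-1}$ avec $h\in\Gamma$ est correcte -- elle d\'ecoule d'ailleurs plus directement du fait que $-g_e^*$ repr\'esente la classe de conjugaison de $\gamma_e^{-1}$; l'\'egalit\'e des longueurs de translation que vous tirez de \eqref{sigmatau} est automatique pour des \'el\'ements conjugu\'es, si bien que vous invoquez \eqref{sigmatau} \`a un endroit o\`u elle est inutile.

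En revanche, votre justification du point r\'eellement d\'elicat -- pourquoi il faut $\sigma_{\tau(e)}=\sigma_e$ et non $\sigma_{\tau(e)}=\sigma_e+i\pi$ -- est erron\'ee, et c'est l\`a un vrai trou. Vous \'ecrivez que l'\'egalit\'e d\'ecal\'ee \og d\'ecale de $\pi$ la partie rotationnelle de la longueur complexe et interdit l'identification isom\'etrique le long de la g\'eod\'esique commune \fg. Or $\ell=2\sigma$ modulo $2i\pi$, donc remplacer $\sigma$ par $\sigma+i\pi$ ne change ni $\ell$ ni l'holonomie du bord~: dans les deux cas l'identification isom\'etrique des bords est possible, et votre argument par graphe de groupes -- qui n'utilise que la conjugaison des holonomies de bord -- passerait tel quel dans le cas $+i\pi$, alors que la conclusion de la proposition y est fausse. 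L'obstruction est plus fine~: d'apr\`es la remarque \ref{rem:1}, la demi-longueur modulo $2i\pi$ (de fa\c{c}on \'equivalente, le choix du relev\'e \`a $\SL_2(\C)$ de trace $-2\cosh\sigma$) encode la position relative des deux pieds du pantalon sur son bord marqu\'e, {\it via} l'isom\'etrie $z\mapsto e^{\sigma}z$ envoyant $v_-$ sur $v_+$; passer de $\sigma$ \`a $\sigma+i\pi$ compose cette isom\'etrie avec le demi-tour autour de l'axe, sans toucher \`a l'holonomie. C'est l'accord de cette donn\'ee, strictement plus fine que l'holonomie, de part et d'autre du revers qui garantit que la structure recoll\'ee induit sur chaque pantalon une structure \'equivalente \`a $(F_e,\rho_e)$, et qui rendra ensuite le param\`etre de d\'ecalage $t_e$ bien d\'efini dans $\C/(\sigma_e\Z+2i\pi\Z)$. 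Comme votre preuve ne v\'erifie jamais le raccord des applications d\'eveloppantes au-del\`a de la conjugaison des holonomies, et que la raison que vous donnez pour ce raccord est fausse, l'\'etape qui fait pr\'ecis\'ement l'objet de la note de bas de page du texte reste non \'etablie.
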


\subsection{Coordonn\'ees de Fenchel-Nielsen} \label{par:FN}

Soit $\gamma$ un \'el\'ement de $\Gamma$ et $g$ la g\'eod\'esique ferm\'ee {\it non orient\'ee} dans $M$ associ\'ee \`a la classe de conjugaison de $\gamma$ dans $\Gamma$. On note $Z_{\gamma}$ le centralisateur de $\gamma$ dans $\PSL_2 (\C)$. La longueur complexe $\ell$ de $\gamma$, vue 
dans $\R_+^* +i \R$ avec $-\pi < \ell \leq \pi$, ne d\'epend que de $g$. On note $\widehat{\T}_{\gamma} = \langle \gamma \rangle \backslash Z_{\gamma}$ et $\T_g$ le tore euclidien $\C / \left( \sigma \Z + 2i\pi \Z \right)$ o\`u $\sigma= \frac12 \ell$. On note enfin $(\T_{M} , d)$
l'espace m\'etrique obtenu en formant la r\'eunion disjointe des $\T_{g}$, o\`u $g$ parcourt l'ensemble des g\'eod\'esiques ferm\'ees (non orient\'ees) dans $M$.

Si $h$ est un \'el\'ement de $\PSL_2 (\C)$ la conjugaison par $h$ induit un diff\'eomorphisme de $\widehat{\T}_{\gamma}$ vers $\widehat{\T}_{h \gamma h^{-1}}$. Mais si 
l'axe (orient\'e) de $\gamma$ est $i \R_+^*$, alors le groupe $Z_{\gamma}$ est \'egal \`a $\big\{ A_{\zeta} = \big( \begin{smallmatrix} e^{\zeta /2} & \\ & e^{-\zeta /2} \end{smallmatrix} 
\big) \; : \; \zeta \in \C \big\}$. L'isomorphisme $A_{\zeta} \mapsto \zeta$ identifie canoniquement
$Z_{\gamma}$ au groupe $\C$; il permet de r\'ealiser 
$\widehat{\T}_{\gamma}$ comme rev\^etement de degr\'e $2$ de $\T_g$.

Consid\'erons maintenant un futal marqu\'e $(\mathbf{\Pi} , g^*) = [\gamma_1 , \gamma_2 , \gamma_3] \in \mathcal{P}$. Comme dans la remarque~\ref{rem:1}~(1), la perpendiculaire commune aux axes de $\gamma_1$ et $\gamma_2$, resp. $\gamma_1$ et $\gamma_3$, d\'etermine un 
\'el\'ement du fibr\'e unitaire normal \`a l'axe de $\gamma_1$ et donc, en prenant d'abord le vecteur tangent de l'axe de translation orient\'e de $\gamma_1$ puis en compl\'etant par leur produit vectoriel, un \'el\'ement du fibr\'e des rep\`eres de $\H_3$. On note $r_-$ et $r_+$ ces deux rep\`eres, 
ordonn\'es de sorte que l'on passe de $r_-$ \`a $r_+$ en suivant l'axe de $\gamma_1$ dans le sens positif. Le groupe $Z_{\gamma_1}$ op\`ere simplement transitivement sur le sous-ensemble constitu\'e des rep\`eres en un point de l'axe de $\gamma_1$. On \'ecrit  $r_- = A_- \cdot (p_0, \vec{u}_0 , \vec{n}_0 , \vec{u}_0 \wedge \vec{n}_0 )$ et $r_+ = A_+ \cdot (p_0, \vec{u}_0 , \vec{n}_0 , \vec{u}_0 \wedge \vec{n}_0 )$ avec $A_-$ et $A_+$ dans $Z_{\gamma_1}$. Il d\'ecoule de la remarque~\ref{rem:1} que $A_+ = A_{\sigma_1} A_-$; dans la suite on identifie les tore euclidiens $\T_g$ et $\langle A_{\sigma_1} \rangle \backslash Z_{\gamma_1}$. On prendra cependant garde au fait que l'action $(\zeta , B) \mapsto A_{\zeta} B$ de $\C$ sur $\langle A_{\sigma_1} \rangle \backslash Z_{\gamma_1}$ d\'epend d'un choix d'orientation de $g$.

\begin{defi}
On appelle {\rm pied} du futal marqu\'e $(\mathbf{\Pi} , g^*)$ l'image commune $p ( \mathbf{\Pi} , g^*)$
de $A_-$ et $A_+$ dans le tore $\T_g$. On note $p:
\mathcal{P} \rightarrow \T_{M}$ l'application qui \`a un futal marqu\'e $(\mathbf{\Pi} , g^*)$ associe son pied $p ( \mathbf{\Pi} , g^*) \in \T_g \subset \T_{M}$.
\end{defi}

Consid\'erons maintenant un \'etiquetage l\'egal $E:\mathcal{E} \rightarrow \mathcal{P}$ et une involution admissible $\tau : \mathcal{E} \rightarrow \mathcal{E}$.
Soit $e\in \mathcal{E}$. On a $\sigma_{\tau(e)} = \sigma_{e}$
mais en g\'en\'eral $p( \mathbf{\Pi}_e , g_e^* ) \neq p (\mathbf{\Pi}_{\tau(e)} , g_{\tau(e)}^*)$. 

\begin{defi}  
On appelle {\rm param\`etre de d\'ecalage} associ\'e \`a la g\'eod\'esique marqu\'ee $g_e^*$ l'\'el\'ement $t_e \in \C/ \left( \sigma_e \Z + 2i\pi \Z \right)$ tel que
$$A_{t_e+i\pi}  p(   \mathbf{\Pi}_e , g_e^* ) =  p (\mathbf{\Pi}_{\tau(e)} , g_{\tau(e)}^*).$$
\end{defi}

\begin{rema}
L'action $(\zeta , B) \mapsto A_{\zeta} B$ de $\C$ sur $\T_{g_e}$ d\'epend du choix d'orientation $g_e^*$, le changer revient \`a changer l'action en 
$(\zeta , B) \mapsto A_{\zeta}^{-1} B$. On en d\'eduit que 
$t_e = t_{\tau(e)}$ et donc que, comme la demi-longueur $\sigma_e$, le param\`etre de d\'ecalage $t_e$ ne d\'epend que de la composante de bord marqu\'ee et pas du futal.
\end{rema}

Les demi-longueurs complexes $(\sigma_e )_{e \in \mathcal{E}}$ d\'eterminent une structure de $\PSL_2 (\C)$-surface sur chaque pantalon $\Pi_e$. Si tous les param\`etres $(\sigma_e , t_e )_{e \in \mathcal{E}}$ sont {\it r\'eels} la repr\'esentation $\rho_{E, \tau}$ est injective et
conjugu\'ee \`a une repr\'esentation dans $\PSL_2 (\R)$. Elle est d\'etermin\'ee, \`a conjugaison pr\`es, par les param\`etres (de Fenchel-Nielsen) $(\sigma_e , t_e/ \sigma_e)_{e \in \mathcal{E}} \in (\R_+^{\times})^{\mathcal{E}} \times \left( \R / \Z\right)^{\mathcal{E}}$. Cette derni\`ere assertion reste vraie lorsque les param\`etres sont complexes~: la repr\'esentation $\rho_{E, \tau} : \pi_1 (S) \rightarrow \PSL_2 (\C)$ est encore 
d\'etermin\'ee, \`a conjugaison pr\`es, par les param\`etres $(\sigma_e , t_e )_{e \in \mathcal{E}}$ qui sont les param\`etres de Fenchel-Nielsen complexes introduits par Kourouniotis \cite{Kourouniotis} et Tan~\cite{Tan}.  

\subsection{Surfaces presque plates}

\begin{defi} 
Soient $R$ et $\varepsilon$ deux r\'eels strictement positifs. Un futal $\mathbf{\Pi} = [\rho]$ dans $M$ est {\em $(R, \varepsilon )$-plat} si, pour $n=1,2,3$, on a:
$$\Big|\sigma_n (\rho) - \frac{R}{2} \Big| \leq \varepsilon.$$
\end{defi}

Dans la partie suivante, nous d\'eduirons le th\'eor\`eme \ref{T1} des deux th\'eor\`emes qui suivent. Le premier -- qui d\'ecoulera du fait que le flot des rep\`eres sur $M$ est exponentiellement m\'elangeant -- 
affirme que si l'on fixe $\varepsilon$ et laisse tendre $R$ vers l'infini, il existe \og beaucoup \fg \ de futals $(R, \varepsilon)$-plats dans $M$ et que  
les pieds de ces futals sont \og bien distribu\'es \fg. On dispose donc d'une grande flexibilit\'e pour construire des surfaces \`a partir de ces futals presque plats. Le
second th\'eor\`eme fournit une recette d'assemblage de futals presque plats de sorte que la repr\'esentation de monodromie de la structure de $\PSL_2 (\C)$-surface obtenue \`a l'issue de l'assemblage soit injective. 

\'Etant donn\'e un espace m\'etrique $(X, d)$, on note $\mathcal{M} (X)$ l'ensemble des mesures bor\'eliennes finies positives \`a support compact dans $X$. Par analogie avec la
m\'etrique de Levy-Prokhorov, voir \cite{Billingsley}, on pose:

\begin{defi} Deux mesures $\mu$, $\nu \in \mathcal{M} (X)$ sont dites {\rm $\delta$-proches}, pour un certain r\'eel strictement positif $\delta$, si 
\begin{enumerate}
\item $\mu (X) = \nu (X)$, et
\item pour tout bor\'elien $A$ de $X$, on a $\mu (A) \leq \nu (V_{\delta} (A))$, o\`u $V_{\delta} (A)$ d\'esigne le $\delta$-voisinage de $A$ dans $X$.
\end{enumerate}
\end{defi}
Noter qu'\^etre $\delta$-proche est une relation sym\'etrique.\footnote{En effet~: 
$\nu (A) \leq \nu (X) - \nu (V_{\delta} (X \setminus V_{\delta} (A)))  \leq \mu (X) - \mu (X \setminus V_{\delta} (A)) = \mu (V_{\delta} (A))$.}

Par abus de notation, on note $\mathcal{M} (\mathcal{P})$ l'espace des mesures bor\'eliennes positives $\mu$ \`a support fini sur $\mathcal{P}$ qui sont pr\'eserv\'ees \`a la fois par 
l'involution $\mathcal{R}$ et le tricycle $\mathrm{rot}$.
L'application $p$ qui \`a un futal marqu\'e $(\mathbf{\Pi} , g^*)$ associe son pied dans $\T_{g}$ permet de pousser une mesure $\mu \in \mathcal{M} (\mathcal{P})$
sur une mesure $p_* \mu \in \mathcal{M} (\T_{M})$. 

\begin{defi}
Soient $\delta$ un r\'eel strictement positif et $\nu$ une mesure dans $\mathcal{M} (\T_{M})$. 
On note $\Lambda_{\nu}$ la moyenne de $\nu$ sous l'action de $\C$, de sorte que~:
$$\Lambda_{\nu} \- {}_{| \T_g} = \left[ \frac{\nu (\T_g )}{\Lambda (\T_g )} \right] \Lambda,$$
o\`u $\Lambda$ est une mesure de Lebesgue sur $\T_g$. On dit que $\nu$ est {\rm $\delta$-\'equidistribu\'ee} si 
$\nu$ est $\delta$-proche de $\Lambda_{\nu}$. On \'ecrira alors $\nu \in \mathcal{M}_{\delta} (\T_M )$.
\end{defi}

\begin{theo} \label{TA}
Il existe des constantes strictement positives $q$ et $D$ telles que pour tout r\'eel $\varepsilon \in \; ]0,1]$, il existe un r\'eel $R_{\varepsilon}$ tel que pour tout 
$R \geq R_{\varepsilon}$, il existe une mesure non nulle $\mu= \mu_{\varepsilon , R} \in \mathcal{M} (\mathcal{P})$ telle que
\begin{enumerate}
\item $\mu$ est support\'ee sur les futals $(R , \varepsilon)$-plats, et 
\item $p_* \mu$ est $DRe^{-qR}$-\'equidistribu\'ee.
\end{enumerate}
\end{theo}

\begin{defi}
Soient $E:\mathcal{E} \rightarrow \mathcal{P}$ un \'etiquetage l\'egal, $\tau : \mathcal{E} \rightarrow \mathcal{E}$ une involution admissible et $\varepsilon$ et $R$ deux 
r\'eels strictement positifs. On dit que la repr\'esentation
 $\rho_{E, \tau}$ est {\rm $(R , \varepsilon)$-plate} si pour tout $e \in \mathcal{E}$,
 \begin{enumerate}
 \item le futal $\mathbf{\Pi}_e$ est $(R , \varepsilon)$-plat, ou encore $|\sigma_e - R/2| \leq \varepsilon$, et 
 \item le param\`etre de d\'ecalage $t_e$ est $\frac{\varepsilon}{R}$-proche de $1$~: $| t_e - 1 | \leq \varepsilon /R$.
\end{enumerate}
\end{defi} 

Si $R$ est fix\'e alors pour $\varepsilon$ suffisamment petit, une repr\'esentation $(R, \varepsilon)$-plate est la monodromie d'une structure de $\PSL_2 (\C)$-surface qui est arbitrairement proche d'\^etre fuchsienne ; en particulier elle est injective. L'int\'er\^et du r\'esultat qui suit est de montrer que puisque l'on a impos\'e aux param\`etres de d\'ecalage d'\^etre proches de $1$, on peut en fait choisir $\varepsilon$ uniforme par rapport \`a $R$.

\begin{theo} \label{TB}
Il existe des r\'eels strictement positifs $\varepsilon_0$ et $R_0$ tels que pour tout \'etiquetage l\'egal $E:\mathcal{E} \rightarrow \mathcal{P}$ et toute involution admissible $\tau : \mathcal{E} \rightarrow \mathcal{E}$ et quel que soit $R \geq R_0$, $\varepsilon \in \; ]0,  \varepsilon_0]$, si la repr\'esentation $\rho_{E, \tau}$ est $(R , \varepsilon)$-plate alors
$\rho_{E, \tau}$ est injective.
\end{theo}

\section{D\'emonstration du th\'eor\`eme \ref{T1}}

Dans cette partie, on admet les th\'eor\`emes \ref{TA} et \ref{TB} et on en d\'eduit le th\'eor\`eme~\ref{T1}. Soient $q$, $D$ et $\varepsilon_0$ les constantes fournies par les th\'eor\`emes \ref{TA} et \ref{TB}. On fixe $\varepsilon \in \; ]0 , \min (\varepsilon_0 , 1) ]$. Soit $R_{\varepsilon}$ le nombre r\'eel strictement positif qui lui est associ\'e par le th\'eor\`eme \ref{TA}. La proposition qui suit garantit l'existence d'une repr\'esentation $(R, \varepsilon)$-plate avec $R\geq R_0$ et $\varepsilon \leq \varepsilon_0$. Le th\'eor\`eme \ref{TB} implique alors que cette repr\'esentation est injective, ce qui d\'emontre le th\'eor\`eme \ref{T1}. 

\begin{prop} \label{PA}
Soit $R > R_{\varepsilon}$. Il existe un \'etiquetage l\'egal $E:\mathcal{E} \rightarrow \mathcal{P}$ et une involution admissible $\tau : \mathcal{E} \rightarrow \mathcal{E}$
tels que pour tout $e \in \mathcal{E}$, on a~:
\begin{enumerate}
\item $|\sigma_e - R/2| \leq \varepsilon$, et 
\item $|t_e -1 | \leq 2D R e^{-q R} $.  
\end{enumerate}
\end{prop}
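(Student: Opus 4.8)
The plan is to use Theorem \ref{TA} to produce a measure $\mu = \mu_{\varepsilon, R} \in \mathcal{M}(\mathcal{P})$ supported on $(R,\varepsilon)$-flat futals whose push-forward $p_* \mu$ is $DRe^{-qR}$-equidistributed, and then to convert this analytic/measure-theoretic statement into the combinatorial data of a legal labeling $E$ together with an admissible involution $\tau$. The key mechanism is that condition (2) of the proposition — $|t_e - 1| \leq 2DRe^{-qR}$ — is exactly a requirement on how the feet of the two futals glued along a marked geodesic are related in the torus $\T_g$. Recall that the shift parameter $t_e$ measures the displacement, via the $\C$-action, between $p(\mathbf{\Pi}_e, g_e^*)$ and $p(\mathbf{\Pi}_{\tau(e)}, g_{\tau(e)}^*)$; so I want to pair up marked futals whose feet are nearly translates of one another by $A_{1}$ (shift close to $1$) inside each torus.

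First I would fix $R > R_\varepsilon$ and invoke Theorem \ref{TA} to obtain the $\mathcal{R}$- and $\mathrm{rot}$-invariant measure $\mu$ on $\mathcal{P}$, supported on $(R,\varepsilon)$-flat futals, with $p_*\mu$ being $DRe^{-qR}$-equidistributed on $\T_M$. The $\mathcal{R}$-invariance of $\mu$ is what will let me build the involution $\tau$: the involution $\mathcal{R}$ sends a marked futal to the same futal marked by the oppositely oriented geodesic $-g^*$, which is precisely the admissibility condition $g_{\tau(e)}^* = -g_e^*$. Because $\mu$ is a finite measure with values that can be taken rational (or at least its atoms counted with integer multiplicities after clearing denominators), I can realize $\mu$ as a finite multiset $\mathcal{E}$ of marked futals: set $E : \mathcal{E} \to \mathcal{P}$ to be the tautological evaluation, let $\mathrm{rot}_E$ be induced by the $\mathrm{rot}$-action (using $\mathrm{rot}$-invariance of $\mu$ to ensure the multiplicities are $\mathrm{rot}$-constant so that $\mathrm{rot}_E$ is well-defined and of order $3$), and let $\mathcal{R}_E$ be induced by $\mathcal{R}$ similarly.

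The heart of the argument is the construction of the admissible pairing $\tau$ satisfying condition (2). On each torus $\T_g$, the equidistribution of $p_*\mu$ means that the feet of the futals marked by $g$ (and those marked by $-g$, which by $\mathcal{R}$-invariance carry the same mass) are $DRe^{-qR}$-close to the Lebesgue average $\Lambda_\nu$. The strategy is to match, within the fiber over each geodesic $g$, a foot coming from one orientation with a foot coming from the opposite orientation that is displaced from it by (approximately) $A_1$ in the torus; equidistribution guarantees that near every target point $A_{1+i\pi}\, p(\mathbf{\Pi}_e, g_e^*)$ there is enough mass of oppositely-marked feet, to within the error $DRe^{-qR}$, to effect a measure-preserving matching. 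Concretely I would apply a Hall-type (marriage) or max-flow/min-cut bipartite matching argument to the two families of feet on $\T_g$: the $\delta$-closeness of both families to the same Lebesgue measure $\Lambda_\nu$, with $\delta = DRe^{-qR}$, furnishes the deficiency condition needed for a perfect matching in which each matched pair is within $2\delta = 2DRe^{-qR}$ of the ideal $A_1$-displacement. This matching defines $\tau$ on $\mathcal{E}$, and the $2DRe^{-qR}$ in the bound is exactly the sum of the two $DRe^{-qR}$ equidistribution errors (one for each orientation class).

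\textbf{The main obstacle} I expect is in making the matching simultaneously \emph{admissible} (an honest involution with $g_{\tau(e)}^* = -g_e^*$) and \emph{compatible with the $\langle\mathrm{rot}_E\rangle$ and $\langle\mathcal{R}_E\rangle$ structure}, while keeping the shift-parameter bound uniform across all tori $\T_g$. The subtlety is that $\tau$ must be an involution on the finite set $\mathcal{E}$ respecting the $\mathcal{R}$-symmetry, rather than merely an abstract bijection of feet; this forces me to perform the matching on the quotient by $\mathcal{R}$ (or to run the bipartite matching between the two oriented fibers in a way that descends to an involution) and to verify that the Levy-Prokhorov-style $\delta$-closeness — which controls only aggregate mass on $\delta$-neighborhoods, not a pointwise pairing — can indeed be upgraded to an explicit measure-preserving transport map with the stated per-pair displacement bound. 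Discretizing $\mu$ to integer multiplicities without destroying the equidistribution estimate, and checking that the marriage condition holds in every torus fiber with the uniform error $DRe^{-qR}$, is where the real work lies.
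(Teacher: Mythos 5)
Your proposal follows essentially the same route as the paper's proof: invoke Theorem \ref{TA}, rationalize the measure to integer multiplicities, read off the tautological legal labeling, and construct $\tau$ fiberwise over each geodesic by a Hall marriage argument pairing the two orientation classes, with the factor $2$ in $2DRe^{-qR}$ arising because $p_*\mu$ and $(A_{1+i\pi}\circ p)_*\mu$ are each $DRe^{-qR}$-close to the same $\C$-invariant average. The two obstacles you flag are exactly the paper's two auxiliary lemmas — rationalization is handled by observing that the closeness condition is a finite linear system with coefficients in $\{0,1\}$, so a real solution can be approximated by rational ones with the same atoms, and the upgrade from Levy--Prokhorov closeness to a pointwise bijection is Proposition \ref{P:Hall}; moreover your parenthetical suggestion of running the bipartite matching between the two oriented fibers ($\mathcal{E}(g^*)^+$ matched to $\mathcal{E}(-g^*)^-$, with $\tau$ defined by $h$ on one side and $h^{-1}$ on the other) is precisely how the paper makes $\tau$ an involution, and no further compatibility of $\tau$ with $\mathrm{rot}_E$ or $\mathcal{R}_E$ is needed since admissibility only requires $g_{\tau(e)}^*=-g_e^*$.
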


On explique maintenant comment d\'eduire la proposition \ref{PA} du th\'eor\`eme \ref{TA}. Il s'agit de passer des mesures aux \'etiquetages. On fixe $R>R_{\varepsilon}$.

\subsection{Une cons\'equence m\'etrique du th\'eor\`eme des mariages de Hall}

Une mesure $\mu \in \mathcal{M} (\mathcal{P})$ est presque un \'etiquetage, la difficult\'e est de construire l'involution admissible $\tau$. L'outil technique est la proposition
g\'en\'erale suivante. 

\begin{prop} \label{P:Hall}
Soient $(X,d)$ un espace m\'etrique, $A$ et $B$ deux ensembles finis de m\^eme cardinal, $f : A \rightarrow X$ et $g: B \rightarrow X$ deux applications et $\delta$ un r\'eel strictement positif. On note $\#_A$, resp. $\#_B$, la mesure de comptage sur $A$, resp. $B$. Et on suppose que les mesures $f_* \#_A$ et  $g_* \#_B$ sont $\delta$-proches.  Alors, il existe une bijection $h : A \rightarrow B$ telle que pour tout $a \in A$, on ait $d(g(h(a)), f(a)) \leq \delta$.
\end{prop}
\begin{proof}
On d\'efinit~:
$$M = \{ (a,b) \in A \times B \; : \; d(f(a) , g(b)) \leq \delta \}$$
comme ensemble des mariages possibles. Il s'agit de construire une bijection $h : A \rightarrow B$ telle que pour tout $a \in A$, on ait $(a, h(a)) \in M$. Le th\'eor\`eme des mariages de Philip Hall \cite{Hall} fournit un crit\`ere pour l'existence
d'une telle {\it application de mariage}~:

\begin{lemm} \label{L:Hall}
Pour qu'il existe une application de mariage, il suffit que pour tout sous-ensemble $C$ de $A$ il y ait au moins $|C|$ \'el\'ements de $B$ qui puissent \^etre mari\'es \`a 
un \'el\'ement de $C$. 
\end{lemm}

Il s'agit donc de montrer que si $C$ est un sous-ensemble de $A$, le sous-ensemble $M_C = \{ b \in B \; : \; \exists \; a \in C, \ (a,b) \in M \}$ de $B$ contient au moins $|C|$ \'el\'ements.
Cela r\'esulte du calcul suivant, reposant sur l'hypoth\`ese que les mesures $f_* \#_A$ et  $g_* \#_B$ sont $\delta$-proches~:
\begin{equation*}
|M_C | = (g_* \#_B) (V_{\delta} (f( C))) \geq (f_* \#_A ) (f( C)) \geq |C|.
\end{equation*}~\end{proof}

\subsection{D\'emonstration de la proposition \ref{PA}}
Le th\'eor\`eme \ref{TA} fournit une mesure $\mu \in \mathcal{M} (\mathcal{P})$ telle que 
$p_* \mu$ soit $DRe^{-qR}$-proche de sa moyenne sous l'action de $\C$. En particulier, on a~:
\begin{equation} \label{RL}
p_* \mu  \mbox{ et } (A_{1+i\pi} \circ p )_* \mu   \mbox{ sont } 2DRe^{-qR}\mbox{-proches} .
\end{equation}
On commence par montrer qu'il existe une mesure {\it rationnelle} v\'erifiant \eqref{RL}.

\begin{lemm}
Il existe une mesure non nulle $\mu^{\rm rat} \in \mathcal{M} (\mathcal{P})$ support\'ee par des futals $(R, \varepsilon)$-plats, v\'erifiant \eqref{RL} et telle que pour tout $(\mathbf{\Pi} , g^*) 
\in \mathcal{P}$, on ait $\mu^{\rm rat} (\mathbf{\Pi} , g^*) \in \Q$.
\end{lemm}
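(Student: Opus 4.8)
The plan is to recast the existence of $\mu^{\mathrm{rat}}$ as the non-triviality of a suitable rational polyhedral cone. Set $I = \mathrm{supp}(\mu) \subseteq \mathcal{P}$. Since $\mu$ is invariant under $\mathcal{R}$ and $\mathrm{rot}$ and supported on $(R,\varepsilon)$-plat futals, the finite set $I$ is stable under $\mathcal{R}$ and $\mathrm{rot}$ and consists of $(R,\varepsilon)$-plat futals. I identify a finitely supported measure $\nu$ carried by $I$ with the vector $x = (x_P)_{P \in I} \in \mathbf{R}^I$ of its atomic masses $x_P = \nu(\{P\})$. Working inside $\mathbf{R}^I$ makes the requirement ``supported by $(R,\varepsilon)$-plat futals'' automatic, so it remains only to express the other conditions as linear conditions on $x$.

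Each of these conditions is linear and \emph{homogeneous}. Positivity of $\nu$ reads $x_P \geq 0$; invariance under $\mathcal{R}$ and $\mathrm{rot}$ reads $x_{\mathcal{R}(P)} = x_P$ and $x_{\mathrm{rot}(P)} = x_P$ for all $P \in I$, these maps being bijections of $I$. For condition~\eqref{RL} I first note that $p_* \nu$ and $(A_{1+i\pi} \circ p)_* \nu$ are supported on the fixed finite set $F = p(I) \cup A_{1+i\pi}(p(I)) \subseteq \T_M$ and automatically share the same total mass $\sum_{P} x_P$. Hence being $\delta$-proche, for $\delta = 2 D R e^{-qR}$, reduces to the finitely many inequalities $(p_* \nu)(A) \leq ((A_{1+i\pi} \circ p)_* \nu)(V_\delta(A))$ obtained as $A$ ranges over the subsets of the finite set $p(I)$: for an arbitrary Borel set $B$ one takes $A = B \cap p(I)$, whence $(p_*\nu)(B) = (p_*\nu)(A)$ while $V_\delta(A) \subseteq V_\delta(B)$, so the inequality for $A$ forces the one for $B$. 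Each of these inequalities has the shape $\sum_P a_P x_P \leq \sum_P b_P x_P$, where $a_P \in \{0,1\}$ records whether $p(P) \in A$ and $b_P \in \{0,1\}$ records whether $A_{1+i\pi}(p(P)) \in V_\delta(A)$.

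Consequently the set $\mathcal{C} \subseteq \mathbf{R}^I$ of admissible vectors is cut out by finitely many homogeneous linear inequalities with integer ($0, \pm 1$) coefficients, i.e. it is a rational polyhedral cone. By construction $\mu \in \mathcal{C}$ and $\mu \neq 0$, so $\mathcal{C} \neq \{0\}$. A rational polyhedral cone is spanned by finitely many rays carried by rational vectors (Minkowski--Weyl), so a non-trivial one contains a non-zero rational vector; clearing denominators keeps it rational and non-zero. Such a vector defines a measure $\mu^{\mathrm{rat}} \in \mathcal{M}(\mathcal{P})$ that is non-zero, carried by $I$ hence by $(R,\varepsilon)$-plat futals, invariant under $\mathcal{R}$ and $\mathrm{rot}$, satisfies~\eqref{RL}, and has $\mu^{\mathrm{rat}}(\mathbf{\Pi}, g^*) \in \mathbf{Q}$ for every $(\mathbf{\Pi}, g^*) \in \mathcal{P}$. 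The only step requiring genuine care is the reduction of the $\delta$-proche relation to finitely many integer-coefficient linear inequalities; granting that, the rationality is a soft consequence of the structure of rational cones.
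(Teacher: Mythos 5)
Your proof is correct, and its engine is the same as the paper's: the relation \eqref{RL} between atomic measures with finite support is equivalent to a finite system of linear inequalities with coefficients in $\{0,1\}$, and rationality then follows from the general fact that a non-empty rational polyhedron contains rational points. The packaging differs in two ways, both to your advantage in terms of completeness. First, the paper takes as unknowns the masses $x_i = p_*\mu(a_i)$ and $y_j = (A_{1+i\pi}\circ p)_*\mu(b_j)$ of the atoms of the \emph{pushforward} measures and perturbs the given real solution to a nearby rational one (density of rational points in a rational polyhedron), whereas you take as unknowns the masses $x_P$ of the futals marqu\'es themselves and invoke Minkowski--Weyl: a non-trivial rational polyhedral cone contains a non-zero rational vector. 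These are two expressions of the same fact, but your choice of variables buys something concrete: positivity, the $\mathcal{R}$- and $\mathrm{rot}$-invariance (needed for $\mu^{\rm rat} \in \mathcal{M}(\mathcal{P})$ in the paper's sense), and the fact that the two pushforwards come from one and the same measure are all built into the cone, whereas the paper's write-up leaves these constraints implicit and glosses over the lifting of rational pushforward values back to a rational measure on $\mathcal{P}$ (a naive proportional rescaling along the fibers of $p$ need not produce rational atoms). Second, your reduction of the $\delta$-proche condition to the finitely many inequalities indexed by $A \subseteq p(I)$, via $A = B \cap p(I)$ and $V_\delta(A) \subseteq V_\delta(B)$, is exactly the unstated justification of the paper's claim that the system is finite with coefficients in $\{0,1\}$. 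The one thing the paper's perturbation argument yields and yours does not is that $\mu^{\rm rat}$ may be taken arbitrarily close to $\mu$; since the sequel only uses non-nullity, support, invariance and \eqref{RL}, nothing is lost.
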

\begin{proof}
Les mesures $p_* \mu$ et $(A_{1+i\pi} \circ p )_* \mu$ sont atomiques \`a support fini. Notons $a_i$ et $b_j$ leurs atomes respectifs. La relation \eqref{RL} est alors \'equivalente \`a 
un syst\`eme  lin\'eaire fini $S$ d'in\'egalit\'es entre les valeurs $x_i = p_* \mu (a_i )$ et $y_j = (A_{1+i\pi} \circ p )_* \mu (b_j )$ dont les coefficients sont entiers (et m\^eme
dans $\{0,1 \}$). Puisque le syst\`eme $S$ a une solution r\'eelle, on peut trouver des solutions rationnelles $x_i^{\rm rat}$, $y_j^{\rm rat}$ arbitrairement proches. On peut
alors remplacer $\mu$ par une mesure $\mu^{\rm rat}$ avec les m\^emes atomes mais rationnelle et telle que $x_i^{\rm rat} = p_* \mu^{\rm rat} (a_i )$ et $y_j^{\rm rat} = (A_{1+i\pi} \circ p )_* \mu^{\rm rat} (b_j )$.
La mesure $\mu^{\rm rat}$ v\'erifie les conclusions du lemme.
\end{proof}

Nous supposons dor\'enavant que $\mu = \mu^{\rm rat}$. Quitte \`a multiplier $\mu$ par un entier suffisamment grand, on peut m\^eme supposer que chaque  $\mu (\mathbf{\Pi} , g^*)$ est entier.
On peut alors \'ecrire $\mu$ comme une somme formelle $\mathcal{R}$-sym\'etrique de futals (non marqu\'es)\footnote{La mesure $\mu$, \'etant $\mathrm{rot}$-invariante,  attribue le m\^eme poids \`a deux futals marqu\'es qui ne diff\`erent que par le marquage.}~:
$$\mu = n_1 (\mathbf{\Pi}_1 + \mathcal{R} (\mathbf{\Pi}_1)) + n_2 (\mathbf{\Pi}_2 + \mathcal{R} (\mathbf{\Pi}_2)) + \ldots + n_m (\mathbf{\Pi}_m + \mathcal{R} (\mathbf{\Pi}_m)), \quad (n_i \in \N^*).$$
Pour chaque $s=1 , \ldots , m$, on fixe un marquage $(\mathbf{\Pi}_s , g_s^*)$. Consid\'erons maintenant l'ensemble $\mathcal{E} = \{ (j,k) \; : \; j=1,2, \ldots , 2(n_1 +n_2 + \ldots +n_m), \ k\in \Z/3 \Z\}$. On associe \`a $\mu$ l'\'etiquetage $E : \mathcal{E} \rightarrow \mathcal{P}$ d\'efini par
$$E(j,k) = \left\{ \begin{array}{l}
\mathrm{rot}^k (\mathbf{\Pi}_s , g_s^*) \quad \mbox{ si } j\mbox{ est impair et } 2(n_1+ \ldots + n_{s-1}) < j \leq 2 (n_1+ \ldots + n_s) \\
\mathcal{R} \circ \mathrm{rot}^k (\mathbf{\Pi}_s , g_s^*) \ \mbox{ si } j\mbox{ est pair et } 2(n_1+ \ldots + n_{s-1}) < j \leq 2 (n_1+ \ldots + n_s).
\end{array} \right.$$
L'ensemble $\mathcal{E}$ est naturellement muni d'une involution $\mathcal{R}_E: (j,k) \mapsto (j+(-1)^{j+1} , k)$ et d'un tricycle $\mathrm{rot}_E : (j,k) \mapsto (j, k+1)$ qui font
de l'application $E$ un \'etiquetage l\'egal. Noter que, puisque $\mu$ est support\'ee par des futals $(R, \varepsilon)$-plats, pour tout $e \in \mathcal{E}$, on a $|\sigma_e -R/2| \leq \varepsilon$.

Il nous reste \`a construire une involution admissible $\tau : \mathcal{E} \rightarrow \mathcal{E}$ telle que, pour tout $e \in \mathcal{E}$, on ait 
$d ( A_{1+i\pi}  p (\mathbf{\Pi}_e , g_e^*) , p (\Pi_{\tau (e)}, g_{\tau (e)}^* ) ) \leq 2D R e^{-q R} $.  Pour cela, on d\'ecompose $\mathcal{E}$ en la r\'eunion disjointe des 
sous-ensembles
$$\mathcal{E} (g^* )^{\pm} = \{ (j,k ) \; : \; E(j,k) = (\cdot , g^* ), \ \pm = (-1)^{j+1} \},$$
o\`u $g^*$ parcourt l'ensemble des g\'eod\'esiques ferm\'ees orient\'ees de $M$. 

\begin{lemm}
Soit $g^*$ une g\'eod\'esique ferm\'ee orient\'ee de $M$. Il existe une bijection $h=h_{g^*} :  \mathcal{E} (g^*)^+ \rightarrow  \mathcal{E} (-g^*)^-$ telle que pour tout $e \in \mathcal{E} (g^*)^+$, 
$$d \left( (A_{1+i\pi}  \circ p \circ E) (h(e)), (p\circ E) (e) \right) \leq 2D R e^{-q R}.$$
\end{lemm} 
\begin{proof} Posons $\alpha^+ = (p \circ E)_* \#_{ \mathcal{E} (g^*)^+}$ et $\alpha^- = (p \circ E)_* \#_{ \mathcal{E} (-g^*)^-}$.
Il d\'ecoule de la $\mathcal{R}$-sym\'etrie de $\mu$ (ou de l'\'etiquetage $E$) que les mesures $\alpha^+$ et $\alpha^-$ co\"{\i}ncident. On a alors 
$p_* \mu _{|\T_g } = 4 \alpha^- = 4 \alpha^+$ et l'\'equation \eqref{RL} implique que les mesures 
$\alpha^+$ et  $(A_{1+i\pi})_* \alpha^-$ sont  $2DRe^{-qR}$-proches. 
Le lemme d\'ecoule donc de la proposition \ref{P:Hall}.
\end{proof}

On d\'efinit alors l'involution $\tau : \mathcal{E} \rightarrow \mathcal{E}$ par 
$$\tau (e) = \left\{ \begin{array}{ll}
h_{g^*} (e) & \mbox{ si } e \in \mathcal{E} (g^*)^+ \\ 
h_{-g^*} ^{-1} (e) & \mbox{ si } e \in \mathcal{E} (g^*)^-.
\end{array} 
\right.$$
La proposition \ref{PA} est d\'emontr\'e et donc aussi le th\'eor\`eme \ref{T1}.

\section{D\'emonstration du th\'eor\`eme \ref{TA}}

On note $K$ le stabilisateur de $p_0$ dans $\PSL_2 (\C)$ et $a_t$ la matrice $\big( \begin{smallmatrix}
e^{t/2} & 0 \\ 0 & e^{-t/2} \end{smallmatrix} \big)$, o\`u $t \in \R$.

\subsection{Le flot des rep\`eres}

La courbe $t \mapsto p_t = (0,e^t) =a_t \cdot p_0$ est la g\'eod\'esique dans $\H_3$ de conditions initiales $(p_0 , \vec{u}_0)$. Le rep\`ere $(p_t , \vec{u}_t , \vec{n}_t , \vec{u}_t \wedge \vec{n}_t )$ associ\'e \`a la matrice $a_t$ est obtenu par transport parall\`ele de $(p_0 , \vec{u}_0 , \vec{n}_0 , \vec{u}_0 \wedge \vec{n}_0 )$ pendant un temps $t$ le long de la g\'eod\'esique $(p_t)_{t\in \R}$. Et pour tout $g \in \PSL_2 (\C)$, la g\'eod\'esique $(g\cdot p_t)_{t \in \R}$ a pour condition initiale $g \cdot (p_0 , \vec{u}_0 )$. L'action du flot g\'eod\'esique sur le fibr\'e des 
rep\`eres par transport parall\`ele pendant un temps $t$ correspond donc -- dans $\PSL_2 (\C)$ -- \`a la multiplication \`a droite par la matrice $a_t$. Cette action induit une action 
-- appel\'ee {\it flot des rep\`eres} -- sur $\Gamma \backslash \PSL_2 (\C)$. La mesure de Haar sur $\PSL_2 (\C )$ induit une mesure finie -- invariante
sous l'action du flot des rep\`eres -- sur $\Gamma \backslash \PSL_2 (\C)$; on note $\nu$ la mesure de probabilit\'e correspondante. 
Le th\'eor\`eme suivant -- d\^u \`a Calvin Moore \cite{Moore} -- affirme que le flot des rep\`eres est exponentiellement m\'elangeant relativement \`a la mesure de Haar; 
on donne les grandes id\'ees de sa d\'emonstration en appendice.

\begin{theo} \label{ME}
Il existe une constante $q=q(M)>0$ telle que si $f$ et $g$ sont deux fonctions de classe $C^{\infty}$ de $\Gamma \backslash \PSL_2 (\C )$ dans $\R$, alors il existe une constante 
$C$, fonction continue des normes de Sobolev de $f$ et de $g$, telle que pour tout $t \in \R$,
$$\Big| \int_{\Gamma \backslash \PSL_2 (\C)} g(x a_t ) f(x) d\nu (x) - \Big( \int_{\Gamma \backslash \PSL_2 (\C)} f d\nu \Big)  \Big( \int_{\Gamma \backslash \PSL_2 (\C)} g d\nu \Big) \Big|
\leq C e^{-q|t|}.$$
\end{theo}

\subsection{Une application du m\'elange}

On munit le groupe $\PSL_2 (\C)$ d'une m\'etrique riemannienne $d$ invariante \`a gauche sous l'action de $\PSL_2 (\C)$, invariante \`a droite sous l'action de $K$ et qui rel\`eve la m\'etrique hyperbolique sur $\H_3$. 
Soit $\varepsilon$ un r\'eel strictement positif. On fixe une fonction $\chi_{\varepsilon}$ positive, de classe $C^{\infty}$ sur $\PSL_2 (\C)$, support\'ee
dans la boule de rayon $\varepsilon/2$ centr\'ee en l'identit\'e et v\'erifiant:
\begin{enumerate}
\item $\chi_{\varepsilon} (\mathrm{Id}) >0$, et
\item $\int_{\PSL_2 (\C )} \chi_{\varepsilon} (x) d\nu(x) =1$. 
\end{enumerate}
On associe \`a la fonction $\chi_{\varepsilon}$ le noyau $k_{\varepsilon} (x,y) = \chi_{\varepsilon} (x^{-1} y)$ sur $\PSL_2 (\C )$. 

\begin{defi}
Soit $r$ un r\'eel strictement positif. On pose~:
$$\forall  \; x,y \in \PSL_2 (\C ), \quad a_{\varepsilon , r} (x,y) = \int_{\PSL_2 (\C )} k_{\varepsilon} (x, g a_r) k_{\varepsilon} (y,g) d\nu (g)$$
et 
$$\forall \; x,y \in \PSL_2 (\C ), \quad A_{\varepsilon , r} (x,y) = \sum_{\gamma \in \Gamma} a_{\varepsilon , r} (x,\gamma y).$$
\end{defi}
Le noyau $a_{\varepsilon, r}$ est un noyau {\it invariant} sur $\PSL_2 (\C)$~: $a_{\varepsilon , r}
(gx,gy) = a_{\varepsilon , r} (x,y)$, pout tous $x,y,g \in \PSL_2 (\C )$. Le noyau 
$A_{\varepsilon , r}$ d\'efinit lui un noyau $C^{\infty}$ sur $\Gamma \backslash \PSL_2 (\C )$; il mesure
si deux rep\`eres dans $M$ sont proches d'\^etre envoy\'es l'un sur l'autre par l'action du flot des rep\`eres pendant un temps $r$.

\begin{prop} \label{P:ME}
Il existe une constante 
$C=C(\varepsilon,M)$ telle que pour tous $x,y \in \PSL_2 (\C)$, on a~:
$$| A_{\varepsilon , r} (x,y) -1 | \leq C e^{-qr}.$$
\end{prop}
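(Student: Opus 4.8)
The plan is to show that $A_{\varepsilon,r}(x,y)$ is, up to the claimed exponential error, equal to the total mass of the measure $\nu$, which is $1$. The starting observation is that $A_{\varepsilon,r}(x,y)$ is a $\Gamma$-periodic kernel, hence descends to $\Gamma\backslash\PSL_2(\C)$, and that the integral of $a_{\varepsilon,r}$ against $\nu$ can be rewritten as a correlation of the mixing theorem. Concretely, unfolding the sum over $\Gamma$ and using that $k_\varepsilon(x,z)=\chi_\varepsilon(x^{-1}z)$ depends only on $x^{-1}z$, I would interpret
$$A_{\varepsilon,r}(x,y) = \int_{\Gamma\backslash\PSL_2(\C)} \Big(\sum_{\gamma\in\Gamma}\chi_\varepsilon\big((\bar x)^{-1}\bar g a_r\big)\Big)\,\chi_\varepsilon\big((\bar y)^{-1}\bar g\big)\,d\nu(\bar g),$$
so that $A_{\varepsilon,r}(x,y)$ becomes a correlation integral $\int F(\bar g a_r) G(\bar g)\,d\nu(\bar g)$ on $\Gamma\backslash\PSL_2(\C)$ for suitable smooth bump functions $F$ and $G$ built from left-translates of $\chi_\varepsilon$.

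\textbf{First} I would make precise the two functions on $\Gamma\backslash\PSL_2(\C)$ to which Theorem \ref{ME} is applied. Set $\varphi_x(\bar g)=\sum_{\gamma\in\Gamma}\chi_\varepsilon(x^{-1}\gamma g)$ (the $\Gamma$-periodization of the left-translate $g\mapsto\chi_\varepsilon(x^{-1}g)$) and similarly $\varphi_y$; these are smooth functions on the quotient. The normalization condition $\int_{\PSL_2(\C)}\chi_\varepsilon\,d\nu=1$ guarantees $\int_{\Gamma\backslash\PSL_2(\C)}\varphi_x\,d\nu=\int_{\Gamma\backslash\PSL_2(\C)}\varphi_y\,d\nu=1$ by unfolding. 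With the correct bookkeeping of which variable carries the $a_r$, one checks that $A_{\varepsilon,r}(x,y)$ equals the correlation $\int \varphi_x(\bar g a_r)\,\varphi_y(\bar g)\,d\nu(\bar g)$ (here I am using that $a_{\varepsilon,r}$ is left-invariant, so the dependence is genuinely on the pair $(x,y)$ through these two periodizations). Then Theorem \ref{ME} gives
$$\Big|A_{\varepsilon,r}(x,y) - \Big(\int\varphi_x\,d\nu\Big)\Big(\int\varphi_y\,d\nu\Big)\Big| \leq C'e^{-qr},$$
and the product of the two integrals is exactly $1\cdot 1 = 1$, which yields the proposition.

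\textbf{The hard part will be} controlling the constant $C$ uniformly in $x$ and $y$. Theorem \ref{ME} produces a constant that depends continuously on the Sobolev norms of the two test functions, and a priori these norms vary as $x,y$ range over $\PSL_2(\C)$. The key point to verify is that the Sobolev norms of $\varphi_x$ and $\varphi_y$ are bounded independently of $x,y$: since $\varphi_x$ is the $\Gamma$-periodization of a fixed profile $\chi_\varepsilon$ composed with a left-translation by $x^{-1}$, and since the metric $d$ was chosen left-invariant under $\PSL_2(\C)$, the left-translation is an isometry and preserves all the relevant derivative norms. Because $\chi_\varepsilon$ has support in a ball of radius $\varepsilon/2$, the number of lattice translates $\gamma$ contributing to the periodization at any point is bounded (the injectivity radius of $M$ being positive), so the periodized function has Sobolev norms bounded by a constant depending only on $\varepsilon$ and $M$, uniformly in $x$. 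This uniformity is exactly what lets one absorb everything into a single constant $C=C(\varepsilon,M)$, completing the argument.
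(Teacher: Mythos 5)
Your proof is correct and follows essentially the same route as the paper: you unfold $A_{\varepsilon,r}(x,y)$ into the correlation $\int_{\Gamma\backslash\PSL_2(\C)} K_{\varepsilon}(x,\cdot\, a_r)\,K_{\varepsilon}(y,\cdot)\,d\nu$ of the two $\Gamma$-periodized bumps, apply Theorem~\ref{ME} after checking by unfolding that each has mean $1$, exactly as in the paper's proof. Your treatment of uniformity is a slight (and welcome) sharpening: where the paper simply invokes continuity of the constant $c(x,y)$ together with compactness of $M$, you bound the Sobolev norms of $K_{\varepsilon}(x,\cdot)$ uniformly in $x$ via left-invariance of the metric and the bounded multiplicity of the periodization, which yields the same conclusion.
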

\begin{proof} Soient $x,y \in \PSL_2 (\C)$ et $D \subset \PSL_2 (\C)$ un domaine fondamental pour l'action de $\Gamma$. On a~:
\begin{equation*}
\begin{split}
A_{\varepsilon , r} (x,y) & = \sum_{\gamma \in \Gamma} \int_{\PSL_2 (\C)} k_{\varepsilon} (x, g a_r) k_{\varepsilon} (y , \gamma^{-1} g) d\nu(g) \\
& = \sum_{\gamma , \gamma ' \in \Gamma} \int_{D} k_{\varepsilon} (x, \gamma' g a_r ) k_{\varepsilon} (y , \gamma^{-1} \gamma ' g) d\nu(g) \\
& = \int_{\Gamma \backslash \PSL_2 (\C )} K_{\varepsilon} (x, ga_r ) K_{\varepsilon} (y , g) d\nu(g),
\end{split}
\end{equation*}
o\`u $K_{\varepsilon} (x,y) = \sum_{\gamma \in \Gamma} k_{\varepsilon} (x, \gamma y)$ d\'efinit une fonction de classe $C^{\infty}$ sur $\Gamma \backslash \PSL_2 (\C )\times \Gamma \backslash \PSL_2 (\C )$.
Le th\'eor\`eme \ref{ME}\footnote{Noter que l'on a~:
$$\int_{\Gamma \backslash \PSL_2 (\C )} K_{\varepsilon} (x, g) d\nu(g) = \int_{\PSL_2 (\C )} k_{\varepsilon} (x,  g) d\nu(g) =1.$$} appliqu\'e aux fonctions 
$f= K_{\varepsilon} (y , \cdot)$ et $g=K_{\varepsilon} (x , \cdot)$, fournit alors une constante $c(x,y)$ telle que  $| A_{\varepsilon , r} (x,y) -1 | \leq c(x,y) e^{-qr}$.
Enfin, par compacit\'e de $M$, on peut choisir $C$ majorant uniform\'ement $c(x,y)$.
\end{proof}

\begin{rema} 
Il d\'ecoule de la proposition \ref{P:ME} que pour $r$ suffisamment grand, il existe
$\gamma \in \Gamma$ tel que $a_{\varepsilon , r} (x, \gamma x)$ soit strictement positif. 
Il existe alors $g \in \PSL_2 (\C)$ tel que $d(g \gamma^{-1} g,  a_r) \leq \varepsilon$. 
Et la g\'eod\'esique ferm\'ee orient\'ee dans $M$ associ\'ee \`a $\gamma$ a donc pour longueur (complexe) un nombre $\ell$ tel que
$|\ell - r | < \varepsilon$. Ce r\'esultat -- ainsi que le principe de sa d\'emonstration -- est d\^u \`a Gregori Margulis \cite{Margulis}.
\end{rema}

\subsection{Des rep\`eres aux futals}

Soit $x \in \PSL_2 (\C)$; on note $(p, \vec{u} , \vec{n}, \vec{u} \wedge \vec{n}) = x\cdot (p_0 , \vec{u}_0 , \vec{n}_0 , \vec{u}_0 \wedge \vec{n}_0)$.
\'Etant donn\'e un r\'eel $r$, on associe \`a $x$ les \'el\'ements
$$x_{(k,r)} = x R_{\frac{2ik\pi}{3}} a_{r/4} \in \PSL_2 (\C)  \quad \mbox{pour } k =0, 1 , 2 ,$$
o\`u $R_{\theta} = \big( \begin{smallmatrix} \cos (\theta /2) & - i \sin (\theta /2) \\ - i \sin (\theta/2) & \cos (\theta /2) \end{smallmatrix} \big)$ est l'\'el\'ement de $\PSL_2 (\C)$ d'extension de Poincar\'e la rotation d'angle $\theta$ dans le 
sens direct autour de la g\'eod\'esique passant par $p_0$ et dirig\'ee par le vecteur $\vec{n}_0$.\footnote{On prendra garde au fait que $R_{\theta}$ et $a_t$ n'ont pas le m\^eme axe et ne commutent donc pas en g\'en\'eral.}  Soit $\omega^k (\vec{u})$ le vecteur obtenu
par rotation directe d'angle $\frac{2k\pi}{3}$ de $\vec{u}$ autour du vecteur $\vec{n}$. Le rep\`ere $x_{(k,r)} \cdot  (p_0 , \vec{u}_0 , \vec{n}_0 , \vec{u}_0 \wedge \vec{n}_0)$ est l'image de $(p, \omega^k 
(\vec{u}) , \vec{n} , \omega^k 
(\vec{u}) \wedge \vec{n})$ par le flot des rep\`eres pendant un temps~$r/4$. 

\begin{figure}[ht]      
\begin{center}
\includegraphics[scale=.3]{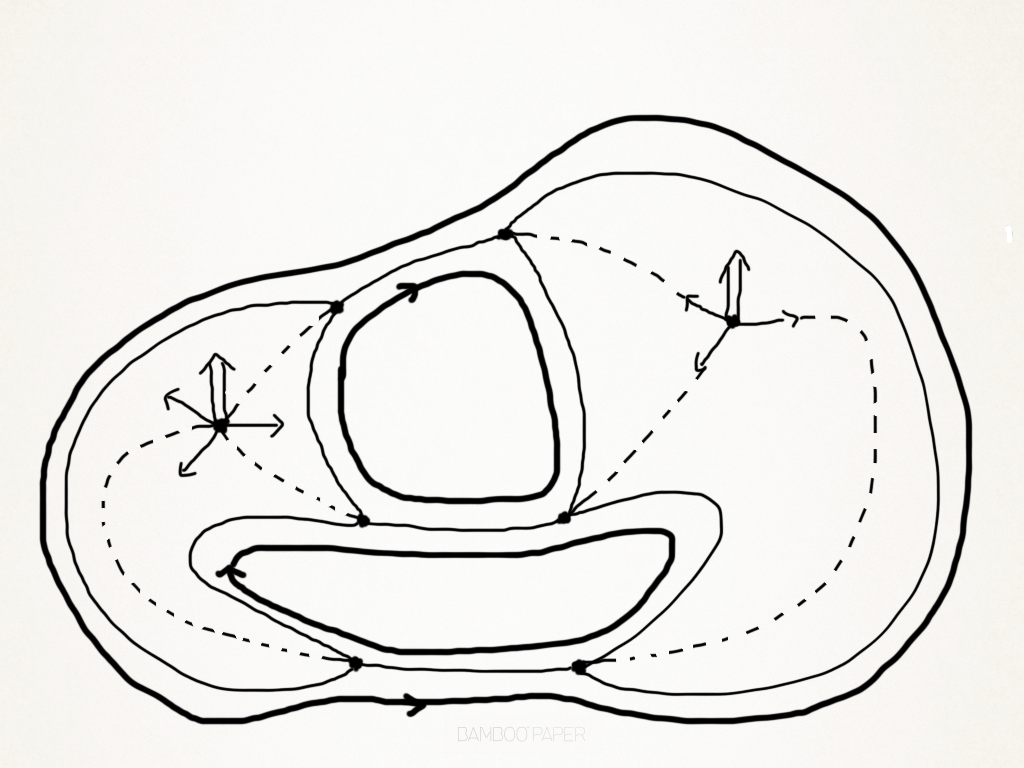}
\put(-180,10){$\scriptstyle{\gamma_3}$}
\put(-83,145){$\scriptstyle{\vec{m}}$}
\put(-82,127){$\scriptstyle{\omega^2 (\vec{v} )}$}
\put(-103,133){$\scriptstyle{\vec{v}}$}
\put(-95,117){$\scriptstyle{\omega (\vec{v} )}$}
\put(-145,25){$\scriptstyle{y_{(2,r)}}$}
\put(-180,140){$\scriptstyle{\gamma_{1}}$}
\put(-230,55){$\scriptstyle{\gamma_{2}}$}
\put(-235,96){$\scriptstyle{\omega (\vec{u} )}$}
\put(-247,89){$\scriptstyle{\vec{u}}$}
\put(-272,103){$\scriptstyle{\omega^2 (\vec{u} )}$}
\put(-238,122){$\scriptstyle{\vec{n}}$}
\put(-200,25){$\scriptstyle{x_{(1,-r)}}$}
\put(-198,69){$\scriptstyle{x_{(2,-r)}}$}
\put(-148,70){$\scriptstyle{y_{(1,r)}}$}
\put(-215,145){$\scriptstyle{x_{(0,-r)}}$}
\put(-151,163){$\scriptstyle{y_{(0,r)}}$}
\put(-133,120){\footnotesize{$\scriptstyle{\ell (r)}$}}
\put(-215,110){\footnotesize{$\scriptstyle{\ell (r)}$}}
\put(-170,73){\footnotesize{$\scriptstyle{r/2}$}}
\put(-180,157){\footnotesize{$\scriptstyle{r/2}$}}
\caption{Des rep\`eres aux futals} 
\end{center}
\end{figure}

Soit $g_r = a_{-r/4} R_{\frac{2i\pi}{3}} a_{r/4}$; on a
$x_{(k+1 ,r)}  =  x_{(k,r)}   g_r$ pour $k = 0, 1,2 $.
On \'ecrit la d\'ecomposition de Cartan $g_r = R_{\theta_1 ( r )+\pi} a_{\ell ( r)} R_{\theta_2 ( r)}$ avec $r \mapsto \ell ( r)$ impaire. Un calcul simple montre alors qu'il existe une constante universelle $C>0$ telle que pour tout $r>0$, les trois nombres
\begin{equation} \label{eq:l}
|\ell ( r) - \frac{r}{2} + \log  \frac{4}{3} | ,  \ | \theta_1 (r )| \mbox{ et } |\theta_2 ( r)| \mbox{ sont inf\'erieurs \`a } C e^{- \frac{r}{4}} .
\end{equation}

\begin{defi} \label{def:bc}
\`A tout triplet $A= (A_k)_{k=0,1,2} \in \Gamma^{3}$
on associe la repr\'esentation 
$\rho_{A} : \pi_1 (\Pi ) \rightarrow \Gamma$ donn\'ee par les formules~: 
$$\rho_{A} (c_n ) = A_{n-1} A_n^{-1}  \quad \mbox{pour } n=1,2,3 .$$
\end{defi}

L'image de $(\gamma_1 , \gamma_2 , \gamma_3) = (\rho_{A} (c_1) , \rho_{A} (c_2) , \rho_{A} (c_3))$ dans $\widehat{\mathcal{P}}$ est uniquement d\'etermin\'ee par la classe de $A$ dans le quotient 
$\Gamma \backslash \Gamma^{3}$. 

\begin{defi}
Soit $dn$ est la mesure de comptage sur $\Gamma^{3}$.
On note $\widehat{\beta}_{\varepsilon , r}$ la mesure sur $\PSL_2 (\C) \times \PSL_2 (\C) \times \Gamma^{3}$ d\'efinie par
$$\prod_{k\in \Z/3 \Z} a_{\varepsilon , r/2} (x_{(-k,-r)} , A_k y_{(k ,r)}) \; d\nu(x) \otimes d\nu (y) \otimes dn(A).$$
\end{defi}

La mesure $\widehat{\beta}_{\varepsilon , r}$ est $\Gamma$-invariante \`a gauche (pour l'action diagonale) et non triviale d'apr\`es la proposition \ref{P:ME}. Elle induit donc une mesure 
non nulle $\beta_{\varepsilon, r}$ sur le quotient $\Gamma \backslash (
\PSL_2 (\C) \times \PSL_2 (\C) \times \Gamma^{3})$. On note 
$$\Phi : \Gamma \backslash (
\PSL_2 (\C) \times \PSL_2 (\C) \times \Gamma^{3}) \rightarrow \widehat{\mathcal{P}}$$
l'application qui \`a la classe de $(x,y,A)$ associe $[\rho_{A}]$.

\begin{prop} \label{P:RP}
Il existe une constante $D>0$ telle que pour tout $\varepsilon >0$, il existe un r\'eel $r_{\varepsilon}$ tel que pour tout $r \geq r_{\varepsilon}$ la mesure $\Phi_* \beta_{\varepsilon , r}$ sur $\widehat{\mathcal{P}}$ est support\'ee dans l'ensemble des futals $(2r - 2\log \frac{4}{3} , D \varepsilon)$-plats. 
\end{prop}
\begin{proof} Soit $(x,y, A)$ dans le support de $\widehat{\beta}_{\varepsilon , r}$.
Il s'agit de calculer les demi-longueurs complexes des \'el\'ements $\gamma_n = \rho_{A} (c_n)$ pour $n=1,2,3$. 
Consid\'erons le cas $n=1$. Puisque $a_{\varepsilon , r/2} (x_{(0, - r)} , A_0 y_{(0,r)})$ est non nul,  il existe $g_0 \in \PSL_2 (\C)$ tel que $d(x_{(0,-r)} , g_0a_{r/2})$ et $d(A_0y_{(0,r)} , g_0)$ soient inf\'erieurs
\`a $\varepsilon/2$. Il existe donc des \'el\'ements $g_{\varepsilon}^{(1)}$ et $g_{\varepsilon}^{(2)}$ dans la boule
de rayon $\varepsilon/2$ autour de l'identit\'e dans $\PSL_2 (\C)$ tels que $x_{(0,-r)} g_{\varepsilon}^{(1)} = g_0 a_{r/2}$ et $g_0 g_{\varepsilon}^{(2)} = A_0 y_{(0,r)}$; on a alors~:
\begin{equation} \label{eq:1}
x_{(0,-r)} g_{\varepsilon}^{(1)} a_{-r/2} g_{\varepsilon}^{(2)} = A_0 y_{(0,r)}.
\end{equation} 
De la m\^eme mani\`ere, puisque 
$a_{\varepsilon , r/2} (x_{(2,-r)}  ,  A_1 y_{(1,r)})$ est non nul, il existe des \'el\'ements $g_{\varepsilon}^{(3)}$ et $g_{\varepsilon}^{(4)}$ dans la boule
de rayon $\varepsilon/2$ autour de l'identit\'e dans $\PSL_2 (\C)$ tels que 
\begin{equation} \label{eq:2}
A_1 y_{(1,r)} g_{\varepsilon}^{(3)} a_{r/2} g_{\varepsilon}^{(4)} = x_{(2,-r)}.
\end{equation}
En utilisant tour \`a tour que $x_{(0,-r)} = x_{(2,-r)} R_{\theta_1 (-r)} R_{\pi} a_{- \ell ( r)} R_{\theta_2 (- r)}$ et que $y_{(1,r)} = y_{(0,r)} R_{\theta_1 ( r)} R_{\pi} a_{\ell ( r)} R_{\theta_2 ( r)}$, on d\'eduit de \eqref{eq:1} et \eqref{eq:2} que
$$A_0 A_1^{-1} g_0 = g_0 ( g_{\varepsilon}^{(2)} R_{\theta_1 ( r)} R_{\pi} a_{\ell ( r)} R_{\theta_2 ( r)} g_{\varepsilon}^{(3 )} a_{r/2} g_{\varepsilon}^{(4)} R_{\theta_1 (-r)} R_{\pi} a_{-\ell ( r)}  R_{\theta_2 (-r)} g_{\varepsilon}^{(1)} a_{-r/2}).$$  
Puisque $R_{\pi} a_t = a_{-t} R_{\pi}$, comme la distance $d$ est $K$-invariante \`a gauche et \`a droite et puisque pour $r$ 
grand les $R_{\theta_i ( r)}$ sont proches de l'identit\'e, on obtient qu'il existe un r\'eel strictement positif $r_{\varepsilon}$ tel que pour tout $r \geq r_{\varepsilon}$ il existe 
des \'el\'ements $h_{\varepsilon}^{(i)}$ pour $i=1, \ldots , 4$, dans la boule
de rayon $\varepsilon$ autour de l'identit\'e dans $\PSL_2 (\C)$, tels que
$$\gamma_1 g_0 = g_0 ( h_{\varepsilon}^{(1)} a_{-\ell ( r)} h_{\varepsilon}^{(2 )} a_{-r/2} h_{\varepsilon}^{(3)} a_{-\ell ( r)} h_{\varepsilon}^{(4)} a_{-r/2}).$$  
Mais un calcul \'el\'ementaire, utilisant \eqref{eq:l}, implique que, quitte \`a augmenter $r_{\varepsilon}$, on peut supposer que pour $r\geq r_{\varepsilon}$, la matrice 
$$h_{\varepsilon}^{(1)} a_{-\ell ( r)} h_{\varepsilon}^{(2 )} a_{-r/2} h_{\varepsilon}^{(3)} a_{-\ell ( r)} h_{\varepsilon}^{(4)} a_{-r/2}$$
est de la forme 
\begin{equation} \label{eq:mat}
\left( \begin{array}{cc}
e^{-2r + 2 \log (4/3)} + O (\varepsilon ) & e^{2r-2\log (4/3)} O(\varepsilon )  \\
O (\varepsilon ) & e^{2r-2\log (4/3)} (1 + O (\varepsilon)) 
\end{array} \right).
\end{equation}
La trace de $\gamma_1$ est donc \'egale \`a $e^{2r-2\log (4/3)} (1+O(\varepsilon))$. En raisonnant de la m\^eme mani\`ere avec $\gamma_2$ et $\gamma_3$ (et en augmentant au besoin $r_{\varepsilon}$), on conclut qu'il existe une constante universelle $D$ telle que pour $r \geq r_{\varepsilon}$ les longueurs complexes de la repr\'esentation $\rho_{A}$ v\'erifient, pour tout $n=1,2,3$,
$$\big| \ell_n (\rho_{A} ) -  2r + 2\log \frac{4}{3} \big| \leq  D \varepsilon .$$
En particulier les \'el\'ements $\rho_{A} (c_n )$ pour $n=1,2,3$ sont loxodromiques. Enfin, puisque la matrice $a_{-2r + 2 \log (4/3)}$ est r\'eelle, un argument de continuit\'e en $\varepsilon$ montre que $\sigma_n (\rho_{A}) = \frac12 \ell_n (\rho_{A} )$ pour $n=1,2,3$. La classe de conjugaison de la repr\'esentation $\rho_{A}$
repr\'esente donc un futal $(2r - 2\log \frac{4}{3} , D\varepsilon)$-plat.
\end{proof}

\begin{rema}\label{rem} 1. Il d\'ecoule des propositions \ref{P:ME} et \ref{P:RP} que pour $R$ suffisamment grand il existe bien des futals $(R, \varepsilon)$-plats. Ce point crucial
est d\^u \`a Lewis Bowen \cite{Bowen} qui \'etait lui aussi motiv\'e par la conjecture des sous-groupes de surfaces.

2. La d\'emonstration montre plus\footnote{C'est d'ailleurs uniquement pour cette raison que l'on a commenc\'e par transporter parall\`element les rep\`eres associ\'es \`a $x$ et $y$ pendant un temps $r/4$, voir la proposition \ref{pPi}.}~: un calcul simple implique en effet que, pour $r$ suffisamment grand, les points fixes $z_{\pm} \in \C \cup \{ \infty \}$ de la matrice \eqref{eq:mat}
v\'erifient $|z_- | = O (\varepsilon )$ et $|z_+ | \geq \frac{1}{O( \varepsilon )}$. Cela montre que -- quitte \`a augmenter les constantes $D$ et $r_{\varepsilon}$ -- on peut supposer que 
pour tout $r \geq r_{\varepsilon}$ l'axe de la matrice \eqref{eq:mat} reste \`a distance au plus $D\varepsilon$ du point base $p_0$ et donc que l'axe de $\gamma_1\in \Gamma$ est \`a distance $\leq D\varepsilon$ de $g_0(p_0)$. Par sym\'etrie on en d\'eduit que l'on peut supposer que l'axe de $\gamma_1$ est \`a distance $\leq D\varepsilon$ des points $x_{(0,-r)} \cdot p_0$, $A_0 y_{(0,r)} \cdot p_0$, $x_{(2 ,-r)} \cdot p_0$ et $A_1 y_{(1,r)} \cdot p_0$ de $\H_3$.    
\end{rema}

\begin{defi} \label{def:mu}
\'Etant donn\'e un r\'eel $\varepsilon >0$ et un r\'eel $R>0$, on pose
$$\mu_{\varepsilon , R} = \Phi_* \beta_{ D^{-1} \varepsilon , \; R/2+ \log \frac{4}{3}}.$$ 
Et on note $R_{\varepsilon} =2 r_{\varepsilon} - 2\log \frac{4}{3}$. 
\end{defi}

Il d\'ecoule de la proposition \ref{P:RP} que pour $R \geq R_{\varepsilon}$, la mesure $\mu_{\varepsilon , R} \in \mathcal{M} (\mathcal{P})$ -- qui est non nulle d'apr\`es la proposition \ref{P:ME} --- est support\'ee
sur les futals $(R, \varepsilon)$-plats. Pour d\'emontrer le th\'eor\`eme \ref{TA}, il reste \`a d\'emontrer la proposition suivante; c'est l'objet des paragraphes qui suivent.

\begin{prop} \label{prop:fin}
Il existe une constante strictement positive $D'$ telle que pour tout $\varepsilon \in \; ]0,1]$, la mesure 
$p_* \mu_{\varepsilon, R}$ sur $\T_M$ est $D'Re^{-qR}$-\'equidistribu\'ee.
\end{prop}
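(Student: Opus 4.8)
We must show that the pushforward measure $p_*\mu_{\varepsilon,R}$ is $D'Re^{-qR}$-equidistributed on $\T_M$, i.e.\ that it is $D'Re^{-qR}$-close to its own average $\Lambda_{p_*\mu_{\varepsilon,R}}$ under the $\C$-action. The plan is to reduce everything to the exponential mixing estimate already encoded in Proposition~\ref{P:ME}. Recall that $\mu_{\varepsilon,R} = \Phi_*\beta_{D^{-1}\varepsilon,\,R/2+\log(4/3)}$, and that $\beta_{\varepsilon,r}$ is built from the invariant kernel $a_{\varepsilon,r/2}$ via the three factors attached to the consecutive edges $x_{(-k,-r)}\mapsto A_k y_{(k,r)}$. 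The foot $p(\mathbf{\Pi},g^*)$ of a marked pants, as defined in \S\ref{par:FN}, records the position in the torus $\T_g$ where the common perpendiculars meet the axis of $\gamma_1$; by Remark~\ref{rem}(2), this axis passes within $O(\varepsilon)$ of the points $x_{(0,-r)}\cdot p_0$ and $A_0 y_{(0,r)}\cdot p_0$, so the foot is essentially the $a_r$-relation datum tracked by the kernel $a_{\varepsilon,r/2}$.

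**The main computation.**
First I would fix a closed oriented geodesic $g$ and compute the restriction $p_*\mu_{\varepsilon,R}\big|_{\T_g}$ explicitly as an integral against the kernels $A_{\varepsilon,r}$. The key point is that integrating $\widehat\beta_{\varepsilon,r}$ over the $x$ and $y$ frame variables, while holding fixed the conjugacy class $[\rho_A]$ (equivalently, fixing the geodesic $g^*$ and tracking the foot), produces a density on $\T_g$ that factors through the invariant kernel $a_{\varepsilon,r/2}(x_{(0,-r)},\,\gamma_1 y_{(0,r)})$. Because $a_{\varepsilon,r/2}$ is a $\PSL_2(\C)$-invariant kernel supported near the $a_{r/2}$-relation, and because the foot map $p$ is equivariant for the $\C=Z_{\gamma_1}$-action, the measure $p_*\mu_{\varepsilon,R}\big|_{\T_g}$ is comparable to the function
\begin{equation*}
\zeta\longmapsto A_{\varepsilon,r}\bigl(A_\zeta\cdot(p_0,\vec u_0,\vec n_0,\vec u_0\wedge\vec n_0),\ (p_0,\vec u_0,\vec n_0,\vec u_0\wedge\vec n_0)\bigr)
\end{equation*}
on the torus $\T_g=\langle A_{\sigma}\rangle\backslash Z_{\gamma_1}$, up to a multiplicative constant coming from the frame integration and up to an $O(\varepsilon)$ error in the foot position inherited from Remark~\ref{rem}(2). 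Proposition~\ref{P:ME} then gives $|A_{\varepsilon,r}(x,y)-1|\le Ce^{-qr}$, so this density is within $Ce^{-qr}$ of the constant density, i.e.\ of a Lebesgue measure $\Lambda$ on $\T_g$.

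**From a near-constant density to $\delta$-closeness.**
Next I would convert the sup-norm bound ``density equals $1+O(e^{-qr})$ on each torus'' into the $\delta$-closeness of Definition of $\delta$-equidistribution. Since $p_*\mu_{\varepsilon,R}\big|_{\T_g}$ has a density bounded between $c(1-Ce^{-qr})$ and $c(1+Ce^{-qr})$ relative to $\Lambda$, its total mass on $\T_g$ determines $\Lambda_{p_*\mu}\big|_{\T_g}=\frac{(p_*\mu)(\T_g)}{\Lambda(\T_g)}\Lambda$, and a density within a factor $1+Ce^{-qr}$ of the correct constant is automatically $\delta$-close to that constant multiple of $\Lambda$, with $\delta$ of order $e^{-qr}$ times the diameter of $\T_g$. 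The diameter of $\T_g=\C/(\sigma\Z+2i\pi\Z)$ with $\sigma\approx R/2$ is $O(R)$, which is precisely where the linear factor $R$ in $D'Re^{-qR}$ comes from. Translating $r=R/2+\log(4/3)$ back gives $e^{-qr}$ comparable to $e^{-qR}$ (after adjusting the constant $q$, or absorbing the fixed factor $e^{-q\log(4/3)}$ into $D'$), so the total error is $O(Re^{-qR})$ as required. I would also need to check the $O(\varepsilon)$ displacement of feet does not spoil this; since $\varepsilon\le 1$ is fixed independently of $R$ while the geodesics have length $\approx R\to\infty$, this displacement affects the density comparison only through the smoothing scale $\varepsilon$ of the kernel and is harmless for $R$ large.

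**The main obstacle.**
The delicate step is the second one: identifying $p_*\mu_{\varepsilon,R}\big|_{\T_g}$, after the frame integration, with the invariant kernel $A_{\varepsilon,r}$ evaluated along the $\C$-orbit, \emph{with the correct normalization and a genuinely uniform $O(e^{-qr})$ remainder}. This requires carefully disentangling the three kernel factors defining $\widehat\beta_{\varepsilon,r}$: two of them constrain the pair $(x,y,A)$ to the geometry of the pants, while integrating out the remaining frame freedom must reproduce exactly one copy of the mixing kernel whose deviation from $1$ is controlled by Proposition~\ref{P:ME}. The bookkeeping of how the foot coordinate varies as one moves $\gamma_1$ through its centralizer, and the verification that this variation is linear in $\zeta$ so that Lebesgue measure is genuinely the average $\Lambda$, is where the real work lies; once that identification is in place, the equidistribution and the $R e^{-qR}$ rate follow formally from Proposition~\ref{P:ME} together with the $O(R)$ diameter bound.
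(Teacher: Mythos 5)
Your overall architecture is the right one, and it matches the paper's: two of the three kernel factors pin down the geometry and produce a reference measure, the third factor is summed over $\Gamma$ into the automorphic kernel whose deviation from $1$ is controlled by Proposition~\ref{P:ME} (this is exactly Lemma~\ref{L2} and the sandwich \eqref{eq:leb} in the paper), and the multiplicative bound $1\pm Ce^{-qr}$ is converted into $\delta$-closeness at the cost of the diameter of the torus, which is $O(R)$ since $|\sigma|\leq 2r$ (this is Lemma~\ref{L3}). But there is a genuine gap at the point you yourself flag and then dismiss: the comparison between the true foot and the configuration tracked by the kernels. You concede an $O(\varepsilon)$ displacement of the feet, ``inherited from Remark~\ref{rem}(2)'', and argue it is harmless because $\varepsilon$ is fixed while $R\to\infty$. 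It is not harmless: an $O(\varepsilon)$ displacement of the atoms of $p_*\mu_{\varepsilon,R}$ yields at best $O(\varepsilon)$-closeness to Lebesgue in the Levy--Prokhorov sense, whereas the statement demands $D'Re^{-qR}$-equidistribution, and this exponential precision is genuinely needed downstream --- Proposition~\ref{PA} must produce shear parameters with $|t_e-1|\leq 2DRe^{-qR}\leq \varepsilon/R$, and an $O(\varepsilon)$ error in the feet would destroy the $(R,\varepsilon)$-flatness of $\rho_{E,\tau}$. Relatedly, your claim that ``the foot map $p$ is equivariant for the $\C=Z_{\gamma_1}$-action'' is precisely what fails for the true foot: the measure with all three factors is \emph{not} $Z_{\gamma_1}$-invariant, and no exact equivariance statement is available for $p\circ\Phi$ itself.

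The missing idea is the paper's ``abstract foot'': the map $\varpi_{\gamma}$, defined via the common perpendicular from the axis of $\gamma_1$ to the ideal geodesic determined by the frames $(x,A_0y)$ (running from $A_0yR_{\frac{4i\pi}{3}}(\infty)$ to $xR_{\frac{2i\pi}{3}}(0)$). This map is exactly $Z_{\gamma_1}$-equivariant, and the two-factor measure $\alpha_\gamma$ is exactly $Z_{\gamma_1}$-invariant (the kernels are left-invariant and $z\in Z_\gamma$ commutes with $\gamma$), so $(\varpi_\gamma)_*\alpha_\gamma$ is \emph{exactly} a multiple of Lebesgue --- no $O(\varepsilon)$ error enters the reference measure at all. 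The $O(\varepsilon)$ fuzz is then beaten down to exponential scale by Proposition~\ref{pPi}: the true foot $p\circ\Phi$ and the abstract foot $\varpi$ differ by at most $Ke^{-cr}$ on the support of $\beta_{\varepsilon,r}$, because two geodesics in $\H_3$ that stay within bounded distance of each other for time $r$ are $Ke^{-cr}$-close at time $r/2$. (This is also the reason the construction first flows the frames for time $r/4$ along the geodesic, as noted in the footnote to Remark~\ref{rem}: it places the foot at the midpoint of the fellow-traveling interval.) With Corollary~\ref{Cor} the exponentially small transfer error is absorbed, giving $9rCe^{-qr}$ and hence $D'Re^{-qR}$ after the substitution $r=R/2+\log\frac43$. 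Without some substitute for this exponential foot-comparison, your argument proves only $O(\varepsilon)$-equidistribution, which does not suffice.
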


\subsection{L'application $\varpi$}

Soient $\varepsilon$ et $r$ deux r\'eels strictement positifs.  

Soient $\gamma \in \Gamma$ et $Z=Z_{\gamma}$ son centralisateur dans $\PSL_2 (\C)$. 
Un couple $(x,y) \in \PSL_2 (\C)^2$ d\'etermine une g\'eod\'esique orient\'ee dans $\H_3$ allant de $yR_{\frac{4i\pi}{3}} (\infty) \in \bP_1 (\C)$ \`a $xR_{\frac{2i\pi}{3}} (0) \in \bP_1 (\C)$.\footnote{Noter que $yR_{\frac{4i\pi}{3}} (\infty) = \lim_{r \rightarrow +\infty} y_{(2,r)}$ et $xR_{\frac{2i\pi}{3}} (0) 
= \lim_{r \rightarrow +\infty } x_{(1,-r)}$.}
Si l'axe de translation de $\gamma$ n'intersecte pas cette g\'eod\'esique la perpendiculaire commune d\'etermine un \'el\'ement du fibr\'e unitaire normal \`a l'axe de $\gamma$ et donc, comme au \S \ref{par:FN}, un rep\`ere de $\H_3$ au-dessus de l'axe de $\gamma$. On \'ecrit $\varpi_{\gamma}(x,y) \cdot (p_0, \vec{u}_0 , \vec{n}_0 , \vec{u}_0 \wedge \vec{n}_0)$, avec $\varpi_{\gamma} (x,y) \in Z$, ce rep\`ere. En tant que groupe, $Z$ op\`ere sur lui-m\^eme et diagonalement (\`a gauche) sur $\PSL_2 (\C)^2$. Si $z \in Z$, on a 
$\varpi_{\gamma} (zx,xy) = z \varpi_{\gamma} (x,y)$. L'application $\varpi_{\gamma}$ passe donc au quotient par le groupe $\langle \gamma \rangle$ (dont l'action commute \`a celle de $Z$) en 
une application $Z$-\'equivariante, d\'efinie presque partout, 
$$\varpi_{\gamma} : \langle \gamma \rangle \backslash (\PSL_2 (\C) \times \PSL_2 (\C)) \rightarrow \widehat{\T}_{\gamma} = \langle \gamma \rangle \backslash Z.$$
Noter que si $h \in \PSL_2 (\C)$, 
la conjugaison par $h$ identifie $\widehat{\T}_{\gamma}$ et $\widehat{\T}_{h\gamma h^{-1}}$ et on a~:
\begin{equation} \label{eq:Gequiv}
\varpi_{h\gamma h^{-1}} (hx,hy) = h \varpi_{\gamma} (x,y) h^{-1}.
\end{equation}

\begin{defi}
On note $\alpha_{\gamma}=\alpha_{\varepsilon , r, \gamma}$ la mesure sur $\langle \gamma \rangle \backslash (\PSL_2 (\C) \times \PSL_2 (\C))$ induite par la mesure $\gamma$-invariante (\`a gauche)
$$a_{\varepsilon , r/2} (x_{(0,-r)} , y_{(0,r)}) a_{\varepsilon , r/2} ( \gamma x_{(2,-r)} , y_{(1,r)}) d\nu(x) \otimes d\nu(y).$$
\end{defi}

\begin{figure}[ht]      
\begin{center}
\includegraphics[scale=.3]{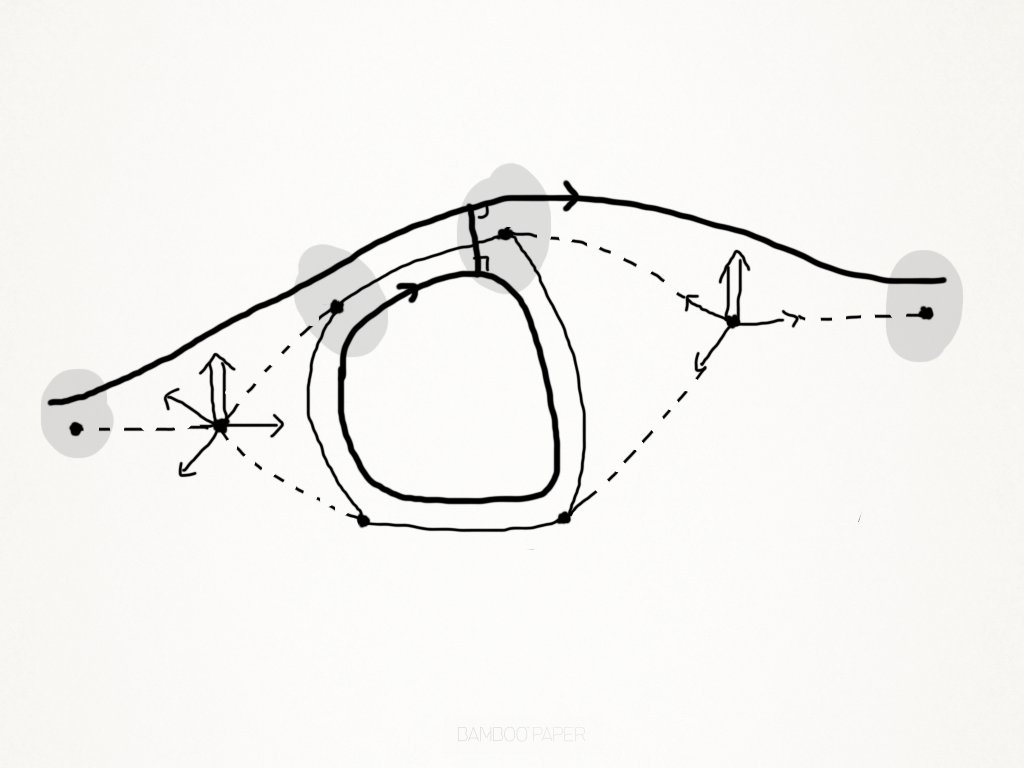}
\put(-83,145){$\scriptstyle{\vec{m}}$}
\put(-82,127){$\scriptstyle{\omega^2 (\vec{v} )}$}
\put(-103,133){$\scriptstyle{\vec{v}}$}
\put(-95,117){$\scriptstyle{\omega (\vec{v} )}$}
\put(-35,150){$\scriptstyle{\rightarrow \  y_{(2,\infty)}}$}
\put(-40,130){$\scriptstyle{y_{(2,r)}}$}
\put(-180,140){$\scriptstyle{\gamma_{1}}$}
\put(-235,96){$\scriptstyle{\omega (\vec{u} )}$}
\put(-247,89){$\scriptstyle{\vec{u}}$}
\put(-272,103){$\scriptstyle{\omega^2 (\vec{u} )}$}
\put(-238,122){$\scriptstyle{\vec{n}}$}
\put(-298,97){$\scriptstyle{x_{(1,-r)}}$}
\put(-198,69){$\scriptstyle{x_{(2,-r)}}$}
\put(-148,70){$\scriptstyle{y_{(1,r)}}$}
\put(-215,145){$\scriptstyle{x_{(0,-r)}}$}
\put(-151,163){$\scriptstyle{y_{(0,r)}}$}
\put(-305,120){$\scriptstyle{x_{(1,-\infty)} \ \leftarrow}$}
\caption{Pied abstrait} \label{Pied}
\end{center}
\end{figure}

Noter que la mesure $\alpha_{\gamma}$ est  $Z$-invariante de sorte que 
\begin{equation} \label{eq:inv}
(\varpi_{\gamma})_* \alpha_{\gamma} = \Lambda_{(\varpi_{\gamma})_* \alpha_{\gamma}}= \left[ \frac{(\varpi_{\gamma})_* \alpha_{\gamma} (\widehat{\T}_{\gamma})}{\Lambda (\widehat{\T}_{\gamma})} \right] \Lambda,
\end{equation}
o\`u $\Lambda$ est la mesure de Lebesgue sur le tore $\widehat{\T}_{\gamma}$. 

\begin{defi}
Soit 
$$\varpi : \Gamma \backslash (\PSL_2 (\C) \times \PSL_2 (\C) \times \Gamma^{3}) \rightarrow \T_{M}$$
l'application qui \`a la classe de $(x,y, A)$ associe l'image de $\varpi_{\gamma_1} (x , A_0 y)$
dans le tore abstrait $\T_g$ o\`u 
$g$ est la g\'eod\'esique ferm\'ee de $M$ associ\'ee \`a $\gamma_1 = A_0 A_1^{-1}$; voir figure~\ref{Pied}.
\end{defi}

Noter que l'application $\varpi$ est bien d\'efinie~: changer $(x,y,A)$ en $\gamma \cdot (x,y, A)$ revient
\`a changer $\varpi_{\gamma_1} (x , A_0 y)$ en $\varpi_{\gamma \gamma_1 \gamma^{-1}} (\gamma x , \gamma A_0 y )$ qui -- en vertu de \eqref{eq:Gequiv} -- a la m\^eme image dans le tore euclidien $\T_g$.

L'application qui \`a un futal marqu\'e associe son pied induit \'egalement une application 
$$p \circ \Phi : \Gamma \backslash (\PSL_2 (\C) \times \PSL_2 (\C) \times \Gamma^{3}) \rightarrow \T_{M}.$$ 

\begin{prop} \label{pPi}
Il existe des constantes $c,K>0$ telles que pour tout $\varepsilon \in \; ]0,1]$, il existe un r\'eel $r_{\varepsilon}'$ tel que pour tout $r \geq r_{\varepsilon}'$ et pour tout \'el\'ement $[x,y,A]$ dans le support de $\beta_{\varepsilon , r}$, on a~:
$$d( p\circ \Phi (x,y,A) , \varpi (x,y,A) ) \leq K e^{-cr}.$$
\end{prop}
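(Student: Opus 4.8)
Proposition \ref{pPi} asserts that two a priori different recipes for the "foot" of a pantalon — the honest geometric foot $p\circ\Phi$ defined via the holonomy representation $\rho_A$, and the "abstract foot" $\varpi$ defined directly from the frames $x,y$ and the element $A_0$ — agree up to an error that decays exponentially in $r$. Here is how I would prove it.

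\medskip

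The plan is to compare both feet inside the normal bundle of the axis of $\gamma_1=A_0A_1^{-1}$, using the quantitative control already furnished by the proof of Proposition \ref{P:RP} and, crucially, by Remark \ref{rem}.2. First I would fix $[x,y,A]$ in the support of $\beta_{\varepsilon,r}$ and recall from the proof of Proposition \ref{P:RP} that $a_{\varepsilon,r/2}(x_{(0,-r)},A_0y_{(0,r)})$ and $a_{\varepsilon,r/2}(\gamma_1 x_{(2,-r)},A_1y_{(1,r)})$ are both nonzero, so there exists $g_0\in\PSL_2(\C)$ with $x_{(0,-r)}$, $A_0y_{(0,r)}$, $x_{(2,-r)}$ and $A_1y_{(1,r)}$ all within $O(\varepsilon)$ (after applying $g_0$) of explicit frames aligned along $a_\bullet$. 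The key geometric input is the second part of Remark \ref{rem}, which says that the axis of $\gamma_1$ passes within distance $D\varepsilon$ of each of the four basepoints $x_{(0,-r)}\cdot p_0$, $A_0y_{(0,r)}\cdot p_0$, $x_{(2,-r)}\cdot p_0$ and $A_1y_{(1,r)}\cdot p_0$. This is precisely the uniform proximity that lets me treat the axis of $\gamma_1$ as the reference geodesic for both foot constructions.

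\medskip

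Next I would unwind the two definitions. The abstract foot $\varpi_{\gamma_1}(x,A_0y)$ is built from the common perpendicular between the axis of $\gamma_1$ and the geodesic running from $A_0yR_{\frac{4i\pi}{3}}(\infty)=\lim_r A_0y_{(2,r)}$ to $xR_{\frac{2i\pi}{3}}(0)=\lim_r x_{(1,-r)}$, whereas the true foot $p(\mathbf{\Pi}_e,g^*)$ comes from the common perpendiculars between the axis of $\gamma_1$ and the axes of $\gamma_2,\gamma_3$, projected to $\T_g$. So I must show that the foot of the common perpendicular to the \emph{limiting endpoint geodesic} and the foot of the common perpendicular to the \emph{axes of the other two boundary curves} land at nearly the same point of $\widehat{\T}_{\gamma_1}$. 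The mechanism is that for large $r$ the frames $x_{(k,\pm r)}$ and $A_ky_{(k,r)}$ are exponentially close to their ideal limits: the finite-$r$ frame differs from its $r\to\infty$ endpoint by a displacement of size $O(e^{-cr})$ along the geodesic flow, since flowing an additional time $t$ moves an endpoint by $O(e^{-t})$ in the visual metric. This is exactly why the construction transported the frames for time $r/4$ at the outset — the footnote to Remark \ref{rem}.2 flags this as the sole purpose of that initial flow.

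\medskip

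The technical heart, and the step I expect to be the main obstacle, is converting "the four basepoints lie within $D\varepsilon$ of the axis, and the defining geodesics are $O(e^{-cr})$-close to the boundary axes" into a genuine $O(e^{-cr})$ bound on the $Z_{\gamma_1}$-coordinate of the foot in $\widehat{\T}_{\gamma_1}$, and thence in $\T_g$. Concretely I would parametrize $Z_{\gamma_1}\cong\C$ via $A_\zeta\mapsto\zeta$ as in \S\ref{par:FN}, express both feet as elements $\zeta_{\mathrm{true}},\zeta_{\mathrm{abs}}\in\C$, and estimate $|\zeta_{\mathrm{true}}-\zeta_{\mathrm{abs}}|$. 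The delicate point is that the foot is the base of a common perpendicular, so one must check that a small perturbation of an ideal endpoint of the transversal geodesic produces only a commensurately small displacement of the foot \emph{along} the axis — a Lipschitz estimate for the common-perpendicular map that could degrade if the transversal nearly met the axis. Remark \ref{rem}.2, giving $|z_-|=O(\varepsilon)$ and $|z_+|\geq 1/O(\varepsilon)$ for the fixed points, guarantees the transversal stays a definite distance from the axis, so this Lipschitz constant is controlled uniformly in $r$. Assembling these estimates and passing to the quotient $\T_g=\langle A_{\sigma_1}\rangle\backslash Z_{\gamma_1}$ (which only decreases distances) yields the bound $d(p\circ\Phi(x,y,A),\varpi(x,y,A))\leq Ke^{-cr}$ for a universal $K$, once $r\geq r'_\varepsilon$ is large enough that all the approximations above are valid.
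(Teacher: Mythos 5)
Your framing is right — reduce to comparing, on the axis of $\gamma_1$, the foot of the common perpendicular coming from the transversal determined by $(x,A_0y)$ with the feet coming from the axes of the other boundary elements, using the support conditions and Remark \ref{rem}.2 — and you correctly flag the stability of the common-perpendicular foot as the technical heart. But your resolution of that heart is wrong, and this is a genuine gap. You claim that the bounds $|z_-|=O(\varepsilon)$, $|z_+|\geq 1/O(\varepsilon)$ of Remark \ref{rem}.2 \og guarantee the transversal stays a definite distance from the axis \fg, so that the foot map is Lipschitz uniformly in $r$. Those bounds say nothing of the sort: they locate the fixed points of $\gamma_1$ so as to show its axis passes \emph{within} $D\varepsilon$ of $g_0(p_0)$ — a proximity statement, not a separation statement. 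In fact the opposite of your claim holds: the transversal and the axis of $\gamma_1$ fellow-travel the same corridor of length $\approx r$ between the two frame clusters, diverging at both ends, so their common perpendicular has length of order $e^{-r/2}$ (compare the distance $2e^{-R/4}+O(e^{-3R/4})$, with $R\approx 2r$, between boundary components of a flat pantalon quoted in the paper). Moreover the degeneracy that actually occurs is not your worry of near-intersection but near-parallelism: for two nearly parallel geodesics at height $\eta$ above the reference axis, perturbing one by $\delta$ can move the foot by roughly $\delta/\eta$, so no uniform-in-$r$ Lipschitz constant exists, and visual-metric closeness of endpoints alone cannot close the argument.

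What saves the statement — and what the paper's proof actually invokes — is the general hyperbolic fact that two geodesics remaining at distance $\leq M$ during a time $r$ are $Ke^{-cr}$-close at time $r/2$ (the paper refers to \cite[Lem. 4.2]{KM1} for details and explicit constants). The transversal and the axis of the boundary element sharing its corridor fellow-travel for a time of order $r$ on \emph{each side} of the foot (this is the whole point of the initial time-$r/4$ transport, which you do cite), so near the foot their mutual distance is exponentially \emph{smaller} than their common height $\approx e^{-r/2}$ above the axis of $\gamma_1$; it is this gain — perturbation much smaller than height — that compensates the degenerating conditioning of the foot map and yields the bound $Ke^{-cr}$. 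Your proposal never establishes this hierarchy of exponential scales, which is the actual content of the proposition. A smaller slip: the abstract foot is close to only one of the two representatives $A_{\pm}$ of the true foot (the one from the axis sharing its corridor), and $A_+=A_{\sigma_1}A_-$, so the two constructions agree in $\T_g$ but not in $\widehat{\T}_{\gamma_1}$ as you assert.
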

\begin{proof} La proposition r\'esulte du fait g\'en\'eral que dans un espace hyperbolique, pour tout $M>0$, il existe des constantes $c,K>0$ telles que deux g\'eod\'esiques qui restent \`a distance $\leq M$ pendant un temps $r$ sont $Ke^{-cr}$-proches au temps $r/2$; voir figure \ref{Pied}. On renvoie \`a \cite[Lem. 4.2]{KM1} pour plus de d\'etails et des constantes  explicites.   
\end{proof} 

\begin{coro} \label{Cor}
Les mesures $(p\circ \Phi)_* \beta_{\varepsilon , r}$ et  $\varpi_* \beta_{\varepsilon , r}$ sont $K e^{-cr}$-proches. 
\end{coro}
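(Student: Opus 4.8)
Le plan est de d\'eduire le corollaire directement de la proposition \ref{pPi} et de la d\'efinition de la relation « \^etre $\delta$-proche ». L'observation centrale est un fait g\'en\'eral : si $F, G$ sont deux applications d'un espace mesur\'e $(Y, \beta)$ vers un espace m\'etrique $(X,d)$, d\'efinies $\beta$-presque partout, et si $d(F(\omega), G(\omega)) \leq \delta$ pour $\beta$-presque tout $\omega$, alors les mesures pouss\'ees $F_* \beta$ et $G_* \beta$ sont $\delta$-proches. J'appliquerais ceci avec $F = p \circ \Phi$, $G = \varpi$, $\beta = \beta_{\varepsilon , r}$ et $\delta = Ke^{-cr}$, o\`u $c$ et $K$ sont les constantes fournies par la proposition \ref{pPi} (pour $r \geq r_{\varepsilon}'$).

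D'abord, je v\'erifierais l'\'egalit\'e des masses totales. Les deux applications \'etant d\'efinies $\beta_{\varepsilon , r}$-presque partout et \`a valeurs dans $\T_M$, on a
$(p\circ \Phi)_* \beta_{\varepsilon , r} (\T_M) = \varpi_* \beta_{\varepsilon , r} (\T_M)$, la masse totale \'etant dans les deux cas celle de $\beta_{\varepsilon , r}$ sur son domaine de d\'efinition. La premi\`ere condition de la d\'efinition de $\delta$-proche est donc acquise.

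Ensuite, pour la seconde condition, je fixerais un bor\'elien $B \subseteq \T_M$ et j'invoquerais l'estimation ponctuelle de la proposition \ref{pPi}. Pour tout \'el\'ement $[x,y,A]$ du support de $\beta_{\varepsilon , r}$ tel que $(p\circ \Phi)(x,y,A) \in B$, la proposition \ref{pPi} donne $d\big( (p\circ \Phi)(x,y,A) , \varpi(x,y,A) \big) \leq Ke^{-cr}$, de sorte que $\varpi(x,y,A) \in V_{Ke^{-cr}}(B)$. Autrement dit, \`a un ensemble $\beta_{\varepsilon , r}$-n\'egligeable pr\`es, on a l'inclusion $(p\circ \Phi)^{-1}(B) \subseteq \varpi^{-1}(V_{Ke^{-cr}}(B))$. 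En poussant $\beta_{\varepsilon , r}$ par cette inclusion, on obtient
$$(p\circ \Phi)_* \beta_{\varepsilon , r}(B) \leq \varpi_* \beta_{\varepsilon , r}\big(V_{Ke^{-cr}}(B)\big),$$
qui est exactement la seconde condition.

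Il n'y a pas ici de v\'eritable obstacle : le corollaire est une cons\'equence formelle de la proposition \ref{pPi} et de la sym\'etrie d\'ej\`a not\'ee de la relation $\delta$-proche. Le seul point demandant un l\'eger soin est de s'assurer que le lieu o\`u $\varpi$ n'est pas d\'efinie est $\beta_{\varepsilon , r}$-n\'egligeable, afin que le passage aux images ne perde aucune masse ; cela r\'esulte de ce que $\varpi_{\gamma_1}$ n'est ind\'efinie que lorsque l'axe de $\gamma_1$ rencontre la g\'eod\'esique associ\'ee au couple consid\'er\'e, configuration de mesure nulle pour $\beta_{\varepsilon , r}$.
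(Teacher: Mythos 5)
Your proof is correct and coincides with the paper's (implicit) argument: the corollary is stated without proof precisely because it is the formal consequence of the pointwise estimate of Proposition \ref{pPi} together with the definition of $\delta$-proximity, exactly as you spell out. Your closing remark about the $\beta_{\varepsilon,r}$-negligibility of the locus where $\varpi$ is undefined is the right (and only) point of care, consistent with the paper's ``d\'efinie presque partout'' caveat for $\varpi_{\gamma}$.
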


\subsection{D\'emonstration de la proposition \ref{prop:fin}}

Soit $\gamma \in \Gamma$. On note $C_{\gamma}$ le sous-ensemble de $\PSL_2 (\C) \times \PSL_2 (\C) \times \Gamma^{3}$ constitu\'e des triplets $(x,y,A)$ tels que $A_0 A_1^{-1} = \gamma$. Soit $\chi : C_{\gamma} \rightarrow \PSL_2 (\C) \times \PSL_2 (\C)$ l'application $(x,y,A) \mapsto (x,A_0^{-1} y)$. Le lemme suivant d\'ecoule de la d\'efinition de $ A_{\varepsilon , r/2}$.

\begin{lemm} \label{L2}
On a~: 
\begin{multline*}
\chi_* (\widehat{\beta}_{\varepsilon , r} |_{C_{\gamma}}) 
\\ = A_{\varepsilon , r/2} (x_{(1,-r)} , y_{(2,r)}) a_{\varepsilon , r/2} (x_{(0,-r)} , y_{(0,r)}) a_{\varepsilon , r/2} ( \gamma x_{(2,-r)} , y_{(1,r)}) d\nu(x) \otimes d\nu(y).
\end{multline*}
\end{lemm}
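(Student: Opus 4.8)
The plan is to unwind the definitions so that both sides are literally the same integral. The key observation is that $\widehat{\beta}_{\varepsilon,r}$ is built from a product over $k \in \Z/3\Z$ of factors $a_{\varepsilon,r/2}(x_{(-k,-r)}, A_k y_{(k,r)})$, while $A_{\varepsilon,r/2}$ is by definition the $\Gamma$-sum $\sum_{\gamma'} a_{\varepsilon,r/2}(\cdot,\gamma'\,\cdot)$ of the invariant kernel $a_{\varepsilon,r/2}$. So the content of the lemma is that summing the $k=2$ factor over the relevant copy of $\Gamma$ reassembles exactly the periodized kernel $A_{\varepsilon,r/2}$, while the change of variable $\chi$ absorbs $A_0$ and identifies the remaining factors.

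Concretely, I would first spell out the three factors of $\widehat\beta_{\varepsilon,r}$ at $k=0,1,2$: they are $a_{\varepsilon,r/2}(x_{(0,-r)},A_0 y_{(0,r)})$, $a_{\varepsilon,r/2}(x_{(-1,-r)},A_1 y_{(1,r)})$ and $a_{\varepsilon,r/2}(x_{(-2,-r)},A_2 y_{(2,r)})$. On the subset $C_\gamma$ we have $A_0 A_1^{-1}=\gamma$ fixed. I would then apply the map $\chi:(x,y,A)\mapsto(x,A_0^{-1}y)$, renaming $A_0^{-1}y$ as the new second variable, call it $y'$. Because $a_{\varepsilon,r/2}$ is an invariant kernel, $a_{\varepsilon,r/2}(x,A_0 y_{(0,r)})=a_{\varepsilon,r/2}(x, (A_0 y)_{(0,r)})$ turns into $a_{\varepsilon,r/2}(x_{(0,-r)},y'_{(0,r)})$ after substitution (the subscript operation $(\cdot)_{(k,r)}$ being right multiplication by a fixed element, which commutes with the substitution); similarly the $k=1$ factor, using $A_1=\gamma^{-1}A_0$ so that $A_1 y=\gamma^{-1}A_0 y=\gamma^{-1}y'$, becomes $a_{\varepsilon,r/2}(x_{(-1,-r)},\gamma^{-1}y'_{(1,r)})$, and by invariance of $a_{\varepsilon,r/2}$ under left translation by $\gamma$ this equals $a_{\varepsilon,r/2}(\gamma x_{(-1,-r)},y'_{(1,r)})$, matching the displayed middle factor after noting the index conventions.

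The heart of the computation is the third factor: pushing forward under $\chi$ forgets $A_2$, but $A_2$ ranges over all of $\Gamma$ subject only to the triple-product relation $A_0 A_1 A_2 = \mathrm{id}$ — no wait, here the $A_k$ are the free variables with $dn$ the counting measure on $\Gamma^3$, so on $C_\gamma$ the variables $A_0$ and $A_2$ still range freely while $A_1$ is pinned by $A_0 A_1^{-1}=\gamma$. The map $\chi$ is constant in $A_2$, so the pushforward integrates (sums) the $k=2$ factor over $A_2\in\Gamma$. Writing $A_2 y_{(2,r)}=A_2 (y')_{(2,r)}$ after the substitution and summing, $\sum_{A_2\in\Gamma} a_{\varepsilon,r/2}(x_{(-2,-r)}, A_2\, y'_{(2,r)})$ is precisely $A_{\varepsilon,r/2}(x_{(-2,-r)},y'_{(2,r)})$ by the definition of $A_{\varepsilon,r/2}$. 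Matching $x_{(-2,-r)}$ with $x_{(1,-r)}$ and $y'_{(2,r)}$ with $y_{(2,r)}$ under the index identification $-2\equiv 1 \pmod 3$ gives the stated factor $A_{\varepsilon,r/2}(x_{(1,-r)},y_{(2,r)})$.

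The step I expect to require the most care is bookkeeping the indices and the left-$\Gamma$-invariance: one must check that after the substitution $y\mapsto A_0^{-1}y$ the remaining $A_0$-dependence disappears (it does, because $A_0$ then only enters through $A_2$ summed over all of $\Gamma$, which is $A_0$-independent, and through the pinned $A_1$ contribution which the invariance of $a_{\varepsilon,r/2}$ converts into a $\gamma$-twist), and that the subscript operations $(\cdot)_{(k,r)}$ — being right translations — interact correctly with the left substitutions and the summation. Once these identifications are made the two expressions coincide termwise, which is the assertion of the lemma.
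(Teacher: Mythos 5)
Your strategy is exactly the paper's intended one: the expos\'e's entire proof of this lemma is the single sentence that it \emph{d\'ecoule de la d\'efinition de} $A_{\varepsilon,r/2}$, i.e.\ precisely your three observations --- the substitution in $y$ absorbs $A_0$, the pinned factor $A_1=\gamma^{-1}A_0$ produces the $\gamma$-twist via left-invariance of $a_{\varepsilon,r/2}$, and the free sum over $A_2$ reassembles the periodized kernel. So in spirit you have reconstructed the intended computation. There are, however, two concrete problems in your write-up.

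First, your computation is internally inconsistent with your declared substitution. If $y'=A_0^{-1}y$, i.e.\ $y=A_0y'$, then the $k=0$ factor is $a_{\varepsilon,r/2}(x_{(0,-r)},A_0y_{(0,r)})=a_{\varepsilon,r/2}(x_{(0,-r)},A_0^{2}\,y'_{(0,r)})$, not $a_{\varepsilon,r/2}(x_{(0,-r)},y'_{(0,r)})$; and your identity $A_1y=\gamma^{-1}A_0y=\gamma^{-1}y'$ tacitly uses $y'=A_0y$. The substitution that makes all three displayed factors come out is $y'=A_0y$ --- note this is also the pairing used in the definition of $\varpi$, which sends $[x,y,A]$ to the class of $\varpi_{\gamma_1}(x,A_0y)$ --- so you are silently computing with the opposite convention from the one you (and the printed definition of $\chi$) state; this should be flagged and fixed, not left ambiguous. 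Second, and more seriously, the step you yourself single out as delicate is resolved incorrectly: you argue that after the substitution \emph{the remaining $A_0$-dependence disappears}, and conclude that only the $A_2$-sum survives. But the fiber of $\chi$ over a point $(x,y')$ is still parametrized by the pair $(A_0,A_2)\in\Gamma^2$ (only $A_1$ is pinned on $C_\gamma$), so the fact that the integrand becomes independent of $A_0$ means the literal fiber sum is an $A_0$-independent quantity summed over all $A_0\in\Gamma$ --- infinite, not the claimed density. What actually eliminates $A_0$ is not the change of variable but the free \emph{gauge} action $\delta\cdot(x,y,A)=(x,\delta^{-1}y,A\delta)$, which preserves the density of $\widehat{\beta}_{\varepsilon,r}$ (since $A_k\delta\,(\delta^{-1}y)_{(k,r)}=A_ky_{(k,r)}$), preserves $C_\gamma$, and leaves $(x,A_0y)$ unchanged. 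Modulo this action --- equivalently on the slice $A_0=\mathrm{id}$, $A_1=\gamma^{-1}$, where only $A_2$ remains free --- the fiber is exactly one copy of $\Gamma$ and your computation then goes through verbatim, giving $A_{\varepsilon,r/2}(x_{(1,-r)},y_{(2,r)})\,a_{\varepsilon,r/2}(x_{(0,-r)},y_{(0,r)})\,a_{\varepsilon,r/2}(\gamma x_{(2,-r)},y_{(1,r)})$. Your proof needs this quotient (or slice) made explicit for the $A_2$-sum to account for the whole fiber; as written, the key accounting step fails.
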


La mesure $\chi_* (\widehat{\beta}_{\varepsilon , r} |_{C_{\gamma}})$ 
sur $\PSL_2 (\C )^2$ est $\gamma$-invariante; on note $\beta_{\gamma}$ la mesure induite sur le quotient $\langle \gamma \rangle \backslash \PSL_2 (\C)^2$. 
On peut maintenant pousser -- \`a l'aide de l'application $\varpi_{\gamma}$ -- les mesures $\alpha_{\gamma}$  et 
$\beta_{\gamma}$ en des mesures sur 
le tore $\widehat{\T}_{\gamma}$. D'apr\`es la d\'efinition de $\alpha_{\gamma}$, le lemme \ref{L2} et la proposition \ref{P:ME}, on a~:
$$(1 - Ce^{-qr}) (\varpi_{\gamma} )_* \alpha_{\gamma} \leq (\varpi_{\gamma} )_* \beta_{\gamma} \leq  (1 + Ce^{-qr}) (\varpi_{\gamma} )_* \alpha_{\gamma} .$$
Mais, pouss\'ee sur le tore abstrait $\T_g$, o\`u $g$ est la g\'eod\'esique ferm\'ee de $M$ associ\'ee \`a $\gamma$, les mesures $(\varpi_{\gamma} )_* \beta_{\gamma}$ 
et $\left[ \varpi_* \beta_{\varepsilon , r} \right] |_{\T_g}$ co\"{\i}ncident.
Il d\'ecoule alors de \eqref{eq:inv} que l'on a~:
\begin{equation} \label{eq:leb}
(1 - Ce^{-qr}) \Lambda_{(\varpi_{\gamma})_* \alpha_{\gamma}} \leq \left[ \varpi_* \beta_{\varepsilon ,  r} \right] |_{\T_g} \leq  (1 + Ce^{-qr})  \Lambda_{(\varpi_{\gamma})_* \alpha_{\gamma}}.
\end{equation}

Le lemme suivant est 
\'el\'ementaire; voir \cite[Lem. 3.1]{KM1}.

\begin{lemm} \label{L3}
Soient $a,b \in \C$ tels que $\T = \C / (a \Z + ib \Z)$ soit un tore. Soit $f$ une fonction continue et positive sur $\T$ et $\delta \in \; ]0, \frac{1}{3} [$ tels que
$$(1- \delta) \Lambda_{f \Lambda} \leq f \Lambda \leq (1+\delta ) \Lambda_{f \Lambda}.$$
Alors, la mesure $f \Lambda$ est $4 \delta (|a| + |b|)$-\'equidistribu\'ee.
\end{lemm}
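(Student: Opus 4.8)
The plan is to verify directly the two conditions defining ``$\delta'$-proche'' for the pair $\mu = f\Lambda$ and $\nu = \Lambda_{f\Lambda}$, with $\delta' = 4\delta(|a|+|b|)$. Condition (1) is immediate: $\nu$ is by construction the $\C$-average of $\mu$, so it has the same total mass. The whole content is condition (2), and I would obtain it by exhibiting a homeomorphism $T:\T\to\T$ with $T_*\mu=\nu$ and $d(x,T(x))\le\delta'$ for every $x$. Indeed, once such a $T$ is available, for any Borel set $A$ one has $\mu(A)=\nu(T(A))\le\nu(V_{\delta'}(A))$, since $T$ is a bijection and $T(A)\subset V_{\delta'}(A)$ (each point moves by at most $\delta'$); this is exactly the required inequality.

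First I would restate the hypothesis pointwise. Writing $c=\mu(\T)/\Lambda(\T)$ for the mean value of $f$, so that $\nu=c\Lambda$, the assumed inequality between the two measures $f\Lambda$ and $c\Lambda$, whose densities with respect to $\Lambda$ are continuous, is equivalent to $(1-\delta)c\le f\le(1+\delta)c$, that is $|f-c|\le\delta c$ on all of $\T$. Next I would fix coordinates adapted to the lattice: via $(s,t)\mapsto sa+t(ib)$ I identify $\T$ with $(\R/\Z)^2$, so that $\Lambda$ becomes a constant multiple of $ds\,dt$, and shifting $s$, resp. $t$, by $\Delta$ displaces a point of $\C$ by $\Delta a$, resp. $\Delta(ib)$.

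I then construct $T$ by an explicit two-step rearrangement. Set $m(s)=\int_0^1 f(s,t)\,dt$, so that $(1-\delta)c\le m(s)\le(1+\delta)c$; define $T_1(s)=\frac1c\int_0^s m$, which makes the $s$-marginal uniform, and $T_2(s,t)=\frac1{m(s)}\int_0^t f(s,t')\,dt'$, which makes each vertical conditional uniform; finally put $T(s,t)=(T_1(s),T_2(s,t))$. Since $f$ is continuous and strictly positive, $T_1$ and each $T_2(s,\cdot)$ are increasing bijections of $[0,1]$ fixing the endpoints, hence $T$ descends to a homeomorphism of the torus, and by disintegration $T_*(f\Lambda)=c\Lambda=\nu$.

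The heart of the matter is the displacement bound, and this is the one place where $\delta<\tfrac13$ is used. Because $\int_0^1(m-c)=0$ and $|m-c|\le\delta c$, one gets $|T_1(s)-s|=\frac1c\bigl|\int_0^s(m-c)\bigr|\le\delta\min(s,1-s)\le\delta/2$; likewise $\int_0^1(f(s,t')-m(s))\,dt'=0$ with $|f(s,\cdot)-m(s)|\le2\delta c$ and $m(s)\ge(1-\delta)c$, whence $|T_2(s,t)-t|\le\frac{\delta}{1-\delta}\le\frac{3\delta}{2}$, the bound $\delta<\tfrac13$ keeping the denominator above $2/3$. Translating back to $\C$, the displacement of $x$ is at most $|T_1(s)-s|\,|a|+|T_2(s,t)-t|\,|b|\le\frac\delta2|a|+\frac{3\delta}2|b|\le 4\delta(|a|+|b|)=\delta'$, which completes the argument. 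The only delicate point is the bookkeeping ensuring that the two successive one-dimensional transports combine into a single map with additive displacement bounds in the two lattice directions; all the rest is a direct computation.
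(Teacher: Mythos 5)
Your proof is correct, and note that it cannot take ``the same route as the paper'' for the simple reason that the paper gives no proof at all: the lemma is declared elementary and delegated to \cite[Lem.~3.1]{KM1}, where Kahn and Markovic verify the defining inequality $\mu(A)\le\nu(V_{\delta'}(A))$ by direct estimates comparing the measures of a set and of its neighborhoods on the torus. Your route is genuinely different and self-contained: you build an explicit triangular (Knothe--Rosenblatt) transport $T$ with $T_*(f\Lambda)=\Lambda_{f\Lambda}$ and a uniform displacement bound, and deduce the neighborhood inequality formally. The steps all check out: the pointwise reformulation $|f-c|\le\delta c$ is legitimate because both densities are continuous (and the degenerate case $f\equiv 0$ is trivial, while otherwise $f\ge(1-\delta)c>0$, justifying your use of strict positivity); $T$ descends to a torus homeomorphism since $f$ is periodic (so $T_2(0,\cdot)=T_2(1,\cdot)$) and $T_1$, $T_2(s,\cdot)$ fix the endpoints; the pushforward identity needs only one-dimensional changes of variable ($t\mapsto T_2(s,t)$ is $C^1$ with derivative $f(s,t)/m(s)$ even for $f$ merely continuous), so no smoothness of $f$ is needed; and the displacement bounds $|T_1(s)-s|\le\delta\min(s,1-s)\le\delta/2$ and $|T_2(s,t)-t|\le\delta/(1-\delta)\le 3\delta/2$ are exactly where the hypothesis $\delta<\frac13$ enters, giving $\frac{\delta}{2}|a|+\frac{3\delta}{2}|b|$, comfortably below the stated $4\delta(|a|+|b|)$ -- so your argument even yields a sharper constant. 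One small streamlining: you do not actually need $T$ to be a bijection or $T(A)$ to be Borel, since $d(x,T(x))\le\delta'$ gives $A\subseteq T^{-1}(V_{\delta'}(A))$, whence $\mu(A)\le\mu\bigl(T^{-1}(V_{\delta'}(A))\bigr)=\nu(V_{\delta'}(A))$ directly. In sum, the direct approach of \cite{KM1} is shorter to state and avoids any construction, while yours trades a bit of bookkeeping for an explicit map, a quantitatively better bound, and a transparent explanation of where each hypothesis (continuity, positivity, $\delta<\frac13$) is used.
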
 

D'apr\`es la proposition \ref{P:RP}, un tore $\T_g = (\R_+^* + i \R)/ (\sigma \Z + 2i\pi \Z)$ 
qui intersecte non trivialement le support de $ \varpi_* \beta_{\varepsilon ,  r}$ v\'erifie $|\sigma | \leq 2r$. Il d\'ecoule donc du lemme \ref{L3} et de l'\'equation \eqref{eq:leb} que la mesure  $\varpi_* \beta_{\varepsilon ,  r}$  est $8rCe^{-qr}$-\'equidistribu\'ee. Le corollaire \ref{Cor} implique alors que pour $r$ sufisamment grand (et en supposant $q\leq c$) les mesures $(p \circ \Phi)_* \beta_{\varepsilon ,  r}$  et $\Lambda_{ (p \circ \Phi)_* \beta_{\varepsilon ,  r}}$ sont $9rCe^{-qr}$-proches. Puisque -- compte tenu de la d\'efinition \ref{def:mu} -- 
on a $p_*\mu_{\varepsilon, R}= (p \circ \Phi)_*  \beta_{D^{-1} \varepsilon , R/2 + \log \frac{4}{3}}$, cela suffit \`a d\'emontrer la  proposition \ref{prop:fin}. 

\section{D\'emonstration du th\'eor\`eme \ref{TB}}

Si $\varepsilon$ est suffisamment petit, une repr\'esentation $(R, \varepsilon)$-plate est la monodromie
d'une structure de $\PSL_2 (\C)$-surface. Si $\varepsilon =0$ cette structure est en fait une structure hyperbolique r\'eelle; nous dirons alors que c'est une {\it surface de type $R$}.    

\subsection{G\'eom\'etrie des surfaces de type $R$}
Une surface $S$ de type $R$ est munie d'une d\'ecomposition en pantalons. On appelle {\it revers} de cette d\'ecomposition les courbes simples ferm\'ees g\'eod\'esiques qui 
d\'ecoupent $S$ en pantalons. Chaque revers est de longueur $R$ et chaque param\`etre de d\'ecalage est \'egal \`a $1$. 

Dans un pantalon hyperbolique dont toutes les composantes de bord sont de longueur $R$, la distance entre deux composantes de bord est \'egale \`a $2e^{-R/4} + O (e^{-3R/4} )$, lorsque $R$ tend vers l'infini. 
Il d\'ecoule donc de la {\it formule du pentagone} \cite[Thm. 2.3.4]{Buser}
que, pour $R$ suffisamment grand, une courbe de longueur $\leq R/5$ joignant deux composantes de bord est homotope, relativement au bord, \`a une g\'eod\'esique minimisante entre ces deux composantes de bord. 

\begin{figure}[ht]      
\begin{center}
\includegraphics[scale=.2]{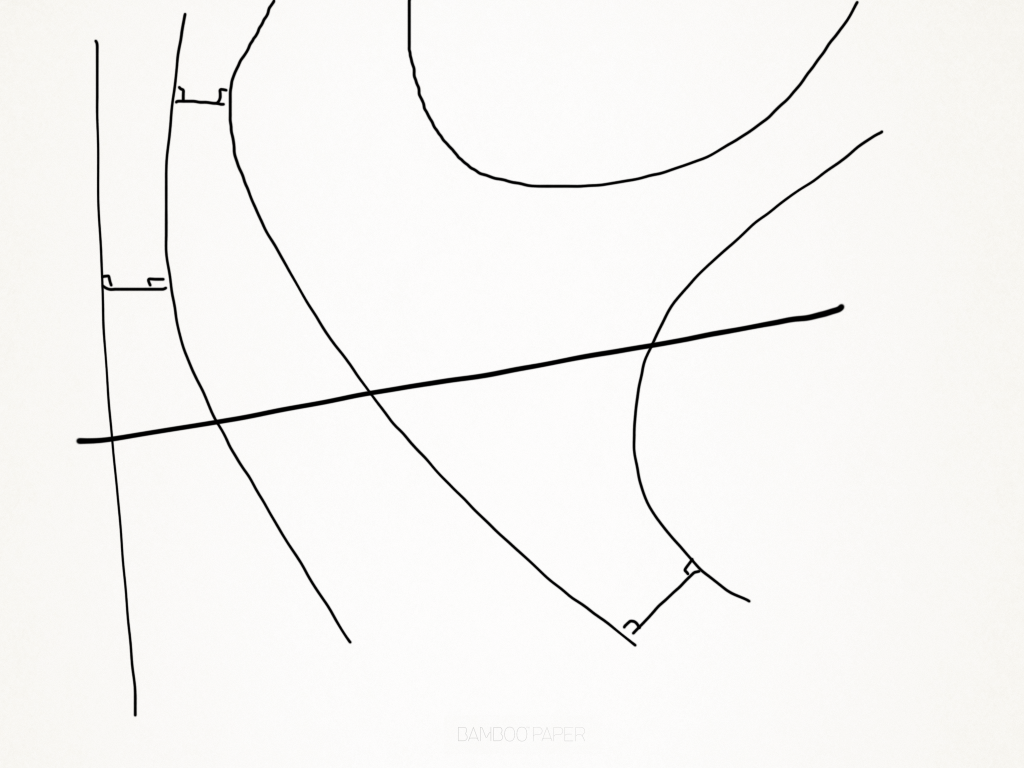}
\put(-180,5){$\scriptstyle{C_0}$}
\put(-135,20){$\scriptstyle{C_1}$}
\put(-78,18){$\scriptstyle{C_2}$}
\put(-53,28){$\scriptstyle{C_3}$}
\put(-100,76){$\scriptstyle{g}$}
\put(-197,95){$\scriptstyle{w_0^-}$}
\put(-170,95){$\scriptstyle{w_0^+}$}
\put(-182,133){$\scriptstyle{w_1^-}$}
\put(-158,133){$\scriptstyle{w_1^+}$}
\put(-92,23){$\scriptstyle{w_2^-}$}
\put(-65,40){$\scriptstyle{w_2^+}$}
\caption{Le segment $g$ et les $C_i$} \label{Fig:5}
\end{center}
\end{figure}

Soient $g$ un segment g\'eod\'esique de $\H_2$ et $C_0, \ldots , C_k$ l'ensemble ordonn\'e des g\'eod\'esiques (bi-infinies) de $\H_2$ qui se projettent sur les revers de $S$
et qui intersectent $g$, voir la figure \ref{Fig:5}. Alors deux g\'eod\'esiques adjacentes $C_{i}$ et $C_{i+1}$
sont soit {\it proches}, \`a distance $\approx 2e^{-R/4}$, soit {\it \'eloign\'ees}, \`a distance $> R/5$. Pour $i=0, \ldots , k$ on note $z_i$ le point d'intersection de $g$ et $C_i$ et $w_i^+$ le
pied dans $C_{i+1}$ de la perpendiculaire commune \`a $C_i$ et $C_{i+1}$. 

Dans le plan hyperbolique, les g\'eod\'esiques s'\'eloignent \`a vitesse lin\'eaire (voir \cite[Thm. 2.3.1]{Buser})~: pour $i=0, \ldots , k-1$, on a
$$d(C_{i} , C_{i+1} ) e^{d(w_i^+ , z_{i+1})} \leq e^{d(z_{i+1} , C_i)}.$$
Le fait que le param\`etre de d\'ecalage le long de chaque revers soit \'egal \`a $1$ implique donc 
que pour $R$ suffisamment grand si, pour $i=1, \ldots , k$, les g\'eod\'esiques $C_{i-1}$ et $C_i$ sont proches et $d(z_{i-1} , z_i) \leq 1$ alors $k\leq R$. En distinguant les sous-segments de $g$ qui correspondent \`a une suite $(z_i )_{i=0 , \ldots k}$ telle que $d(z_{i-1} , z_i ) \leq 1$ et les \og sauts \fg \ correspondants aux sous-segments compl\'ementaires, on obtient la proposition suivante, voir \cite[Lem. 2.3]{KM1}.

\begin{prop} \label{P:SR}
Il existe un r\'eel $R_0$ et une constante $C$ tels que pour tout $R \geq R_0$, tout segment g\'eod\'esique de longueur $\ell$ sur une surface $S$ de type $R$ coupe au plus $CR \ell$ fois les revers.
\end{prop}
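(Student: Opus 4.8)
Le plan est de compter les travers\'ees des revers par un segment g\'eod\'esique de longueur $\ell$ trac\'e sur $S$ \`a l'aide de son rel\`evement \`a $\H_2$. Ce rel\`evement est un segment g\'eod\'esique $g$ de longueur $\ell$, et ses travers\'ees des revers correspondent bijectivement aux intersections $z_0 , \ldots , z_k$ de $g$ avec les g\'eod\'esiques $C_0 , \ldots , C_k$ relevant les revers. J'appellerais \emph{saut} tout indice $i \in \{1 , \ldots , k\}$ tel que $d(z_{i-1} , z_i ) > 1$, et je d\'ecouperais la suite $(z_i )$ en \emph{blocs} maximaux de points cons\'ecutifs dont les espacements successifs sont tous $\leq 1$ : ce sont exactement les composantes obtenues en \^otant les sauts. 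L'observation cruciale est que si $d(z_{i-1} , z_i ) \leq 1$ alors, comme $z_{i-1} \in C_{i-1}$ et $z_i \in C_i$, on a $d(C_{i-1} , C_i ) \leq d(z_{i-1} , z_i ) \leq 1 < R/5$ d\`es que $R$ est assez grand ; la dichotomie proches/\'eloign\'ees force donc $C_{i-1}$ et $C_i$ \`a \^etre proches. Ainsi, \`a l'int\'erieur de chaque bloc, toutes les g\'eod\'esiques adjacentes sont proches et tous les espacements cons\'ecutifs sont $\leq 1$.

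Je majorerais ensuite s\'epar\'ement les deux types de contributions. D'une part, \`a chaque saut correspond un sous-segment de $g$ de longueur $d(z_{i-1} , z_i ) > 1$ ; ces sous-segments \'etant disjoints et contenus dans $g$, le nombre $m$ de sauts v\'erifie $m \leq \ell$. Comme $m$ sauts d\'ecoupent la suite $(z_i)$ en au plus $m + 1$ blocs, il y a au plus $\ell + 1$ blocs. D'autre part, chaque bloc v\'erifie pr\'ecis\'ement les hypoth\`eses de l'\'enonc\'e g\'eom\'etrique \'etabli plus haut --- toutes les g\'eod\'esiques adjacentes proches et tous les espacements $\leq 1$ --- lequel assure qu'un tel bloc comporte au plus $R$ espacements, donc au plus $R + 1$ points. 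En sommant sur les blocs, le nombre total de travers\'ees est au plus $(R+1)(\ell + 1)$, quantit\'e major\'ee par $C R \ell$ (par exemple $C = 4$) d\`es que $\ell \geq 1$ et $R \geq 1$ ; cela \'etablit la proposition dans le r\'egime $\ell \geq 1$ qui seul importe pour la suite (on peut du reste toujours remplacer $CR\ell$ par $CR(\ell+1)$).

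La partie combinatoire ci-dessus est routini\`ere ; la v\'eritable difficult\'e est l'\'enonc\'e g\'eom\'etrique bornant la taille d'un bloc par $R$, \'etabli dans le paragraphe pr\'ec\'edent et que j'invoque ici. Son ressort est l'interaction entre l'estim\'ee de divergence lin\'eaire $d(C_i , C_{i+1} ) \, e^{d(w_i^+ , z_{i+1} )} \leq e^{d(z_{i+1} , C_i )}$ et le fait que le param\`etre de d\'ecalage vaille $1$. Dans un bloc on a $d(C_i , C_{i+1} ) \approx 2 e^{-R/4}$ et $d(z_{i+1} , C_i ) \leq d(z_i , z_{i+1} ) \leq 1$, de sorte que $z_{i+1}$ reste confin\'e \`a distance $O(R)$ du pied $w_i^+$ le long de $C_{i+1}$ ; or le d\'ecalage \'egal \`a $1$ impose que les pieds des perpendiculaires communes progressent d'une quantit\'e d'ordre $1$ le long de chaque revers \`a chaque travers\'ee. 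Accumuler une d\'erive d'ordre $1$ par \'etape tout en restant dans une fen\^etre de taille $O(R)$ borne le nombre d'\'etapes par $O(R)$, le calcul pr\'ecis issu de la formule du pentagone fournissant la constante $R$. C'est donc l\`a, et non dans l'agencement final en blocs et sauts, que r\'eside l'obstacle principal.
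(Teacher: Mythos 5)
Votre d\'emonstration est correcte et suit essentiellement la m\^eme d\'emarche que le texte~: d\'ecoupage de la suite des intersections en blocs \`a espacements $\leq 1$ (dont les g\'eod\'esiques adjacentes sont automatiquement proches, par la dichotomie) et en sauts, majoration du nombre de sauts par $\ell$, puis majoration de la taille de chaque bloc par $R$ via l'estim\'ee de divergence lin\'eaire combin\'ee au param\`etre de d\'ecalage \'egal \`a $1$ --- \'enonc\'e cl\'e que le texte, comme vous, se contente d'affirmer en renvoyant \`a \cite[Lem. 2.3]{KM1} pour les d\'etails. Votre remarque sur le r\'egime $\ell \geq 1$ (ou le remplacement de $CR\ell$ par $CR(\ell +1)$) corrige au passage une impr\'ecision b\'enigne de l'\'enonc\'e, qui est litt\'eralement faux pour $\ell$ tr\`es petit puisqu'un segment de longueur $\approx 2e^{-R/4}$ peut d\'ej\`a couper un revers.
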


\subsection{D\'emonstration du th\'eor\`eme \ref{TB}}

Une repr\'esentation $(R , \varepsilon)$-plate $\rho$ est la monodromie d'une structure de $\PSL_2 (\C)$-surface sur une surface $S$ munie d'une d\'ecomposition en pantalons.
On muni $S$ de la structure hyperbolique de type $R$ associ\'ee \`a cette d\'ecomposition. La surface $S$ est alors uniformis\'ee par le plan hyperbolique $\H_2$ et on note 
$F : \H_2 \rightarrow \H_3$ l'application d\'eveloppante de la structure de $\PSL_2 (\C )$-surface sur $S$ associ\'ee \`a $\rho$. Puisque $F$ est $\rho$-\'equivariante, pour 
d\'emontrer le th\'eor\`eme \ref{TB}, il suffit de montrer qu'il existe des r\'eels strictement positifs
$\varepsilon_0$ et $R_0$ tels que si $R \geq R_0$ et $\varepsilon \in ]0, \varepsilon_0]$ alors $F$ envoie toute g\'eod\'esique de $\H_2$ sur une courbe bi-infinie dans $\H_3$.

Mais l'application $F$ envoie les g\'eod\'esiques $C_i$ de $\H_2$ au-dessus des revers de la surface $S$ (de type $R$) sur des g\'eod\'esiques de $\H_3$. Il suffit donc de montrer que si 
$g$ est une g\'eod\'esique de $\H_2$ transversale aux $C_i$ alors $F(g)$ est une courbe bi-infinie de $\H_3$. Comme dans la d\'emonstration de la proposition \ref{P:SR}, on d\'ecoupe $g$ en deux types de sous-segments~: ceux qui rencontrent une suite de $C_i$ proches et les \og sauts \fg.  L'image d'un saut est un long segment quasi-g\'eod\'esique, dont la constante de quasi-g\'eod\'esie ne d\'epend que de $\varepsilon$ et pas de $R$. La difficult\'e consiste donc \`a contr\^oler les segments g\'eod\'esiques de $C$ qui correspondent \`a une suite de $C_i$ proches. Mais d'apr\`es la proposition \ref{P:SR}, une telle suite contient au plus $R$ \'el\'ements alors que le long des revers l'angle de d\'ecalage est $\leq \varepsilon/R$. L'image d'un tel segment est donc l\`a encore un segment quasi-g\'eod\'esique, dont la constante de quasi-g\'eod\'esie ne d\'epend que de $\varepsilon$ et pas de $R$. Gr\^ace \`a l'hyperbolicit\'e de $\H_3$, on conclut que la courbe $F(g)$ est quasi-g\'eod\'esique; elle
est en particulier bi-infinie.

\begin{rema} Les d\'etails de cette \og d\'emonstration \fg \ sont un peu laborieux \`a \'ecrire. Noter n\'eanmoins que le m\'elange exponentiel permet en fait de construire des surfaces $(R, e^{-dR})$-plates pour une constante $d=d(M)>0$. Il est alors plus facile de faire fonctionner l'approche d\'ecrite ci-dessus; Dragomir Sarik \cite{Sarik} r\'ealise d'ailleurs toute surface $(R, \varepsilon/R)$-plate comme surface pliss\'ee.
\end{rema}

Dans \cite{KM1} Kahn et Markovic d\'emontrent le th\'eor\`eme \ref{TB} \`a l'aide d'une formule, due \`a Caroline Series \cite{Series}, de variation de la distance 
$d(C_0 , C_k )$ le long d'une d\'eformation quasi-fuchsienne $t \mapsto \rho_t$. Ils montrent que le long d'une d\'eformation $(R, \varepsilon)$-plate la d\'eriv\'ee $d(C_0, C_k) '$ est 
un $O(\varepsilon)$. Cela leur permet de montrer que, quitte \`a diminuer $\varepsilon_0$, on peut supposer que l'image du bord $\partial \H_2$, par l'application induite $\partial F: \partial \H_2 \rightarrow \partial \H_3$, est arbitrairement proche d'un vrai cercle dans $\partial \H_3$. Enfin puisque l'on peut d\'emarrer la construction \`a partir de deux rep\`eres appartenant \`a un plan arbitraire de $\H_3$, la d\'emonstration du th\'eor\`eme \ref{TA} implique en fait le th\'eor\`eme suivant.

\begin{theo} \label{Tprinc}
Soit $M=\Gamma \backslash \H_3$ une vari\'et\'e hyperbolique uniformis\'ee de dimension $3$. Pour tout cercle $C$ dans $\partial \H_3$, il existe une suite de $\PSL_2 (\C)$-surfaces $(F_n : \widetilde{S}_n \rightarrow \H_3 )_{n\geq 0}$ dans $M$ telle que $(\partial F_n (\widetilde{S}_n ) )_{n \in \N}$ converge vers $C$, pour la distance de Hausdorff dans $\partial \H_3$.
\end{theo}

\section{Applications et g\'en\'eralisations}

\subsection{Du th\'eor\`eme \ref{T1} \`a la conjecture VH}

Soit $M$ une vari\'et\'e hyperbolique de dimension $3$. 
D'apr\`es le th\'eor\`eme \ref{T1}, il existe une surface $S$ de genre $g\geq 2$ et une immersion $\pi_1$-injective $f : S \rightarrow M$. 

Peter Scott \cite{Scott} donne un crit\`ere alg\'ebrique d'existence d'un rev\^etement fini de $M$ auquel $f$ se rel\`eve en un plongement. 
 
\begin{defi}
Soit $G$ un groupe de type fini. Un sous-ensemble $S$ de $G$ est {\rm s\'eparable} si $S$ est 
ferm\'e dans la topologie profinie de $G$, c'est-\`a-dire si
$S$ est une intersection de classes \`a gauche de sous-groupes d'indice fini de $G$.
\end{defi}
 
\begin{prop}
Supposons $f_* (\pi_1 S)$ s\'eparable dans $\pi_1 M$. Alors $f$ se rel\`eve \`a un rev\^ement fini 
de $M$ en un plongement, ou en une application isotope \`a un plongement.  
\end{prop}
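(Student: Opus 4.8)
The plan is to strip the geometry down to a single application of separability, with all the $3$-manifold topology packaged into the intermediate cover. Write $G=\pi_1 M=\Gamma$ and $H=f_*(\pi_1 S)\cong\pi_1 S$, and let $p\colon M_H\to M$ be the cover with $\pi_1 M_H=H$. Since $f_*(\pi_1 S)=H$, the immersion $f$ lifts to a map $\hat f\colon S\to M_H$ inducing an isomorphism on fundamental groups; as $S$ is compact, $\hat f(S)$ is compact. The goal is then to find a cover of $M$ intermediate between $M_H$ and $M$, of finite degree over $M$, into which $S$ embeds.

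First I would produce an embedded model in $M_H$ itself. The cover $M_H$ is aspherical (it covers the closed hyperbolic, hence aspherical and irreducible, manifold $M$), orientable, with $\pi_1 M_H$ a closed surface group of genus $\geq 2$. By the compact core theorem it admits a compact core $C$, which we may take to contain $\hat f(S)$. A compact orientable irreducible $3$-manifold with nonempty boundary whose fundamental group is a closed orientable surface group is the trivial interval bundle $C\cong S\times[0,1]$, so one horizontal boundary component is an embedded surface $S_0\subset M_H$. Since $M_H\simeq C\simeq S$ is aspherical, the two $\pi_1$-isomorphisms $S\to M_H$ given by $\hat f$ and by the inclusion of $S_0$ are homotopic; thus $\hat f$ is homotopic to the embedding $S_0$ in the (infinite) cover $M_H$, and it remains to push $S_0$ down to a finite cover while keeping it embedded.

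Next I would carry out the combinatorial count. Lift $S_0$ to its full preimage $\widetilde S_0\subset\H_3$, an $H$-invariant set on which $H$ acts cocompactly; the translate $\gamma\widetilde S_0$ depends only on $\gamma H$. Pushing $S_0$ into an intermediate finite cover $M_K$, with $H\leq K\leq G$ of finite index, keeps it embedded exactly when no $k\in K\setminus H$ satisfies $k\widetilde S_0\cap\widetilde S_0\neq\emptyset$, for such a $k$ would identify two distinct points of $S_0$ in $M_K$. Now the set $\{\gamma\in G:\gamma\widetilde S_0\cap\widetilde S_0\neq\emptyset\}$ is invariant under left and right multiplication by $H$, and by proper discontinuity of $\Gamma$ on $\H_3$ together with compactness of a fundamental domain for $H$ on $\widetilde S_0$ it is finite modulo that action. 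Hence it is a finite union of double cosets $H\gamma_0 H,\dots,H\gamma_n H$ with $\gamma_0=\mathrm{id}$ and $\gamma_i\notin H$ for $i\geq 1$.

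Finally I would invoke the hypothesis. Separability of $H$ means that for each $\gamma_i$ with $i\geq 1$ there is a finite-index subgroup $K_i$ with $H\leq K_i\leq G$ and $\gamma_i\notin K_i$; set $K=\bigcap_{i\geq 1}K_i$, of finite index and containing $H$. Because $H\leq K$, a double coset $H\gamma_i H$ meets $K$ if and only if $\gamma_i\in K$, which fails for $i\geq 1$; therefore any $k\in K$ with $k\widetilde S_0\cap\widetilde S_0\neq\emptyset$ lies in $H\gamma_0 H=H$. Consequently $S_0$ projects to an embedded surface in the finite cover $M_K\to M$, and transporting the homotopy $\hat f\simeq S_0$ to $M_K$ exhibits $f$ as lifting there to a map isotopic to an embedding. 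The one genuinely delicate step is the structural reduction in the second paragraph, namely recognising the compact core of $M_H$ as an interval bundle so as to manufacture the embedded model $S_0$; once the problem is rephrased as separating finitely many double cosets from $H$, the separability hypothesis finishes it immediately.
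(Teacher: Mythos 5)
The paper gives no proof of this proposition --- it is quoted from Scott \cite{Scott} --- so your proposal must be measured against the standard argument it cites, and it does match that argument in structure: pass to the cover $M_H$ with $\pi_1 M_H = H$, use the compact core together with the recognition of a compact orientable irreducible $3$-manifold with closed-surface fundamental group as a trivial $I$-bundle to produce an embedded model $S_0$, then use separability to descend to an intermediate finite cover. Your packaging of the separability step is a correct and clean one: the set $\{\gamma \in G : \gamma\widetilde S_0 \cap \widetilde S_0 \neq \emptyset\}$ is indeed a finite union of double cosets $H\gamma_i H$ (cocompactness of the $H$-action on $\widetilde S_0$ plus proper discontinuity of $\Gamma$ on $\H_3$), and since $H \leq K$ one has $H\gamma_i H \cap K \neq \emptyset$ if and only if $\gamma_i \in K$, so intersecting the separating subgroups $K_i$ kills every essential self-intersection. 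This double-coset count is essentially a proof, specialized to the compact set $S_0$, of Scott's more topological lemma that separability lets any compact subset of $M_H$ embed in an intermediate finite cover; the two formulations buy the same thing.

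One step as written is false and needs a (standard) repair: asphericity alone does not make $\hat f$ and the inclusion of $S_0$ homotopic. Two maps between aspherical spaces are homotopic exactly when they induce the same homomorphism up to conjugacy, and your two isomorphisms $\pi_1 S \to H$ may differ by an outer automorphism of the surface group. The fix is to invoke Dehn--Nielsen--Baer (or simply to ask only for a homotopy to \emph{some} embedding): precompose the inclusion $S_0 \hookrightarrow M_H$ with a homeomorphism $S \to S_0$ inducing the discrepancy, after which asphericity does give a homotopy from $\hat f$ to an embedding with image $S_0$, which is all the proposition requires. Two smaller points you should make explicit: that $\partial C \neq \emptyset$, which holds because $M_H$ is noncompact ($H$ has infinite index in $\pi_1 M$, e.g.\ since a closed surface group has cohomological dimension $2$ and cannot have finite index in the fundamental group of a closed aspherical $3$-manifold), so the $I$-bundle theorem applies rather than its closed case; and that the core may be chosen irreducible inside the irreducible $M_H$. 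Finally, what your argument literally produces is a lift homotopic to an embedding; the proposition's ``isotope \`a un plongement'' is the same informal statement, the upgrade from homotopy to isotopy for incompressible embedded surfaces being Waldhausen's theorem.
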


Si de plus $f_* (\pi_1 S)$ est quasi-convexe dans $\pi_1M$, alors il existe un rev\^etement fini $\widehat{M}$ de $M$ qui contient deux \'el\'evations 
plong\'ees et disjointes de $f(S)$ dont la r\'eunion est non s\'eparante, voir par exemple \cite[Th\'eor\`eme 2]{EnsMath}. En particulier le groupe $\pi_1 (\widehat{M})$ se surjecte sur un groupe libre de rang $2$.
En plus des conjectures mentionn\'ees en introduction, il convient donc d'ajouter l'\'enonc\'e suivant\footnote{\'Enonc\'e \'egalement conjectur\'e dans \cite{Thurston}.} qui fait le lien entre la conjecture des sous-groupes de surfaces et la conjecture VH.

\begin{enumerate}
\item[7.] Tout sous-groupe de type fini dans $\pi_1 M$ est s\'eparable.
\end{enumerate}

Scott \cite{Scott} puis Ian Agol, Darren Long et Alan Reid \cite{ALR} montrent qu'un sous-groupe quasi-convexe d'un groupe de reflexions dans un poly\`edre hyperbolique \`a angles droits est s\'eparable. 
Ce r\'esultat est \'etendu au cas des sous-groupes quasi-convexes d'un groupe de Coxeter -- ou d'un groupe d'Artin -- \`a angles droits abstrait par Haglund \cite{Haglund}. Depuis une quinzaine d'ann\'ees, Dani Wise met en {\oe}uvre un vaste programme visant \`a d\'emontrer
la conjecture (7) et plus g\'en\'eralement \`a d\'emontrer la s\'eparabilit\'e des sous-groupes quasi-convexes dans certains groupes $G$ hyperboliques (au sens de Gromov), \`a l'aide de
la m\'ethode de Scott. Un groupe de Coxeter -- ou d'Artin -- \`a angles droits abstrait poss\`ede une r\'ealisation g\'eom\'etrique qui est un complexe cubique \`a courbure n\'egative. La premi\`ere \'etape du programme de Wise passe alors par la \og cubulation \fg \ du groupe $G$. On renvoie \`a \cite{BH} pour une introduction aux complexes cubiques et \`a la notion de courbure 
n\'egative dans ce contexte.

\subsection{Cubulation des vari\'et\'es hyperboliques; groupes sp\'eciaux} \label{par:6.1}

Dans \cite{BW} avec Wise, et ind\'ependamment Guillaume Dufour dans sa th\`ese \cite{Dufour}, on d\'eduit du th\'eor\`eme \ref{Tprinc} et des travaux de Michah Sageev \cite{Sageev} le r\'esultat suivant.

\begin{theo} \label{T:BW}
Soit $M$ une vari\'et\'e hyperbolique de dimension $3$. Alors $\pi_1 M$ op\`ere librement, proprement
avec quotient compact sur un complexe cubique CAT$(0)$ localement fini.
\end{theo}

\begin{defi}
Un complexe cubique $C$ connexe de courbure n\'egative est {\rm sp\'ecial} s'il existe une isom\'etrie locale
de $C$ dans le complexe cubique associ\'e \`a un groupe d'Artin \`a angles droits. On dit alors que
$\pi_1 C$ op\`ere {\rm sp\'ecialement} sur tout rev\^etement universel de $C$.
\end{defi}

La notion de complexe cubique sp\'ecial est due \`a Fr\'ed\'eric Haglund et Dani Wise, voir \cite{HaglundWiseSpecial}
o\`u elle est d\'efinie en termes de configurations interdites pour les hyperplans immerg\'es.
Haglund et Wise d\'emontrent notamment que si $C$ est un complexe cubique sp\'ecial compact et si $\pi_1C$ est un groupe hyperbolique au sens de Gromov, alors tout sous-groupe quasi-convexe de $\pi_1 C$ est s\'eparable.

Ian Agol \cite{Agol} montre par ailleurs que si $M$ est une vari\'et\'e de dimension $3$ dont le groupe
fondamental est isomorphe au groupe fondamental d'un complexe cubique sp\'ecial alors $M$ poss\`ede un rev\^etement fini qui fibre sur le cercle. 

Dans \cite{HaglundWiseSpecial} puis \cite{WiseIsraelHierarchy} Haglund et Wise d\'emontrent des crit\`eres puissants pour qu'un complexe cubique CAT$(0)$ poss\`ede un rev\^etement fini sp\'ecial. Tout r\'ecemment, en se reposant sur ces travaux, Agol \cite{AgolSpecial} a annonc\'e la d\'emonstration du th\'eor\`eme suivant.

\begin{theo}
Soit $G$ un groupe hyperbolique au sens de Gromov qui op\`ere librement, proprement
avec quotient compact sur un complexe cubique CAT$(0)$ localement fini $C$. Alors $G$ 
contient un sous-groupe d'indice fini qui op\`ere sp\'ecialement sur $C$.
\end{theo}

\begin{coro}
Soit $M$ une vari\'et\'e hyperbolique de dimension $3$. Alors $M$ v\'erifie les conjectures (2) -- (7).
\end{coro}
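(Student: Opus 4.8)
Le plan est de faire passer la cubulation du th\'eor\`eme \ref{T:BW} dans le th\'eor\`eme de sp\'ecialisation virtuelle d'Agol qui pr\'ec\`ede, puis d'en r\'ecolter les cons\'equences une \`a une. Comme $M$ est hyperbolique, le groupe $\pi_1 M$ est hyperbolique au sens de Gromov, et le th\'eor\`eme \ref{T:BW} le fait op\'erer librement, proprement et avec quotient compact sur un complexe cubique CAT$(0)$ localement fini $C$. Le th\'eor\`eme d'Agol \cite{AgolSpecial} fournit alors un sous-groupe d'indice fini $G' \subset \pi_1 M$ qui op\`ere sp\'ecialement sur $C$; je note $\widehat{M} \rightarrow M$ le rev\^etement fini correspondant, de sorte que $\pi_1 \widehat{M} = G'$ est isomorphe au groupe fondamental d'un complexe cubique sp\'ecial compact.

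Pour la conjecture (6), j'appliquerais le th\'eor\`eme de fibration d'Agol \cite{Agol} au rev\^etement $\widehat{M}$~: puisque $\pi_1 \widehat{M}$ est isomorphe au groupe fondamental d'un complexe cubique sp\'ecial, $\widehat{M}$ poss\`ede un rev\^etement fini qui fibre sur le cercle, et ce rev\^etement est encore un rev\^etement fini de $M$. Les implications de l'introduction donnent alors $(6) \Rightarrow (3) \Rightarrow (2)$, ce qui couvre d\'ej\`a (2), (3) et (6).

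Pour la conjecture (7), le point de d\'epart est le r\'esultat de Haglund et Wise \cite{HaglundWiseSpecial}~: comme $\pi_1 \widehat{M}$ est hyperbolique et isomorphe au groupe fondamental d'un complexe cubique sp\'ecial compact, tout sous-groupe quasi-convexe de $G'$ y est s\'eparable, et la s\'eparabilit\'e dans un sous-groupe d'indice fini entra\^{\i}ne la s\'eparabilit\'e dans $\pi_1 M$. Il reste \`a couvrir tous les sous-groupes de type fini. Ceux d'indice fini sont trivialement s\'eparables; et par le th\'eor\`eme de domestication (\emph{tameness}) et le th\'eor\`eme de rev\^etement de Canary, un sous-groupe de type fini d'indice infini de $\pi_1 M$ est soit g\'eom\'etriquement fini, donc quasi-convexe et trait\'e ci-dessus, soit une fibre virtuelle. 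Dans ce dernier cas le sous-groupe est un noyau $N = \ker (p : G'' \rightarrow \Z)$ pour un sous-groupe d'indice fini $G''$, et comme $N = \bigcap_{n \geq 1} p^{-1}(n \Z)$ est une intersection de sous-groupes d'indice fini, il est s\'eparable. D'o\`u la conjecture (7).

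Enfin, pour (5), j'utiliserais la surface de Kahn--Markovic du th\'eor\`eme \ref{T1}, qui est quasi-fuchsienne~: le sous-groupe $f_* (\pi_1 S)$ est quasi-convexe dans $\pi_1 M$, donc s\'eparable d'apr\`es (7). Le crit\`ere de Scott \cite{Scott} rel\`eve alors $f$ en un plongement dans un rev\^etement fini, et la construction de \cite{EnsMath} produit un rev\^etement fini contenant deux \'el\'evations plong\'ees, disjointes et de r\'eunion non s\'eparante de $f(S)$, dont le groupe fondamental se surjecte sur un groupe libre de rang $2$~: c'est la conjecture (5). Les implications $(5) \Rightarrow (4) \Rightarrow (3)$ ach\`event de couvrir les conjectures (2)--(7). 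La principale difficult\'e de cet assemblage sera l'\'etape (7)~: Haglund--Wise ne donnent directement que la s\'eparabilit\'e des sous-groupes quasi-convexes, et c'est le recours \`a la domestication pour ramener le cas g\'en\'eral \`a la dichotomie quasi-convexe/fibre virtuelle qui en constitue le c{\oe}ur, les deux th\'eor\`emes d'Agol \'etant par ailleurs les bo\^{\i}tes noires profondes sur lesquelles tout repose.
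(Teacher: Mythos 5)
Votre d\'emonstration est correcte et suit essentiellement la route que le texte laisse implicite~: la cubulation du th\'eor\`eme \ref{T:BW} combin\'ee au th\'eor\`eme de sp\'ecialisation virtuelle d'Agol \cite{AgolSpecial}, puis le th\'eor\`eme de fibration d'Agol \cite{Agol} pour la conjecture (6), la s\'eparabilit\'e des sous-groupes quasi-convexes de Haglund--Wise \cite{HaglundWiseSpecial}, le crit\`ere de Scott \cite{Scott} et la construction de \cite{EnsMath} pour (2) et (5), les conjectures (3) et (4) s'en d\'eduisant par les implications de l'introduction. Le seul point o\`u vous d\'epassez ce que le texte rend explicite est la r\'eduction de la conjecture (7) compl\`ete au cas quasi-convexe, via la domestication et le th\'eor\`eme de rev\^etement de Canary (dichotomie g\'eom\'etriquement fini / fibre virtuelle, les fibres virtuelles \'etant des intersections de sous-groupes d'indice fini, donc s\'eparables)~: c'est bien l'argument standard, d\^u \`a Agol, et votre r\'edaction en est correcte.
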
 

\subsection{D\'enombrement des surfaces incompressibles}

Soit $M$ une vari\'et\'e hyperbolique de dimension $3$. 
De m\^eme que l'argument ergodique de Margulis lui a permis de retrouver (et d'\'etendre en courbure variable) le \og th\'eor\`eme des g\'eod\'esiques premi\`eres \fg \ sur le comptage des g\'eod\'esiques dans une surface hyperbolique, Kahn et Markovic \cite{KM2} d\'eveloppent leurs m\'ethodes ainsi que des r\'esultats ant\'erieurs de Joseph Masters \cite{Masters} pour compter le nombre $c(M,g)$ de classes de conjugaison de sous-groupes de $\pi_1 M$ isomorphes au groupe fondamental d'une surface de genre $g$.  

\begin{theo} On a~:
$$\lim_{g \rightarrow +\infty} \frac{\log c(M,g)}{2g \log g} = 1 .$$
\end{theo}

\subsection{G\'en\'eralisation \`a tous les espaces sym\'etriques et conjecture d'Ehrenpreis}

Soient $G$ un groupe de Lie r\'eel semi-simple connexe de centre trivial et sans facteur compact, $K$ un sous-groupe compact maximal de $G$ et $X=G/K$ l'espace sym\'etrique associ\'e. 

\begin{conj} \label{CLM}
Soit $\Gamma$ un r\'eseau uniforme dans $G$. Alors $\Gamma$ contient un sous-groupe isomorphe
au groupe fondamental d'une surface
de genre au moins $2$.
\end{conj}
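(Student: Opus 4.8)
Le plan naturel consiste \`a transposer la strat\'egie de Kahn et Markovic en rempla\c{c}ant $\H_3$ par $X$, le groupe $\PSL_2 (\C)$ par $G$ et le fibr\'e des rep\`eres $\Gamma \backslash \PSL_2 (\C)$ par $\Gamma \backslash G$. Le r\^ole du flot des rep\`eres serait tenu par l'action, par multiplication \`a droite, d'un sous-groupe \`a un param\`etre $(a_t)_{t \in \R}$ engendr\'e par un \'el\'ement r\'egulier de l'alg\`ebre de Lie de $G$. Comme $G$ est semi-simple, connexe et sans facteur compact, le th\'eor\`eme de Howe-Moore assure la d\'ecroissance des coefficients matriciels~; l'existence d'un trou spectral -- cons\'equence de la propri\'et\'e (T) lorsque $G$ est de rang $\geq 2$, et d'estim\'ees effectives de d\'ecroissance en rang $1$ -- fournit l'analogue du th\'eor\`eme \ref{ME}, \`a savoir le m\'elange exponentiel du flot $(a_t)$ sur $\Gamma \backslash G$ relativement \`a la mesure de Haar.

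La donn\'ee g\'eom\'etrique de base est un plan hyperbolique totalement g\'eod\'esique $\H_2 \hookrightarrow X$, provenant d'un sous-groupe $\SL_2 (\R) \subset G$ associ\'e \`a une racine restreinte. On construit alors des \og pantalons \fg \ presque plats dont les trois composantes de bord ont une m\^eme demi-longueur voisine de $R/2$ et dont les holonomies sont des \'el\'ements loxodromiques de $\Gamma$ conjugu\'es dans ce sous-groupe de racine. Le m\'elange exponentiel, coupl\'e \`a l'analogue de l'application $\varpi$, doit fournir l'\'equivalent du th\'eor\`eme \ref{TA}~: beaucoup de pantalons presque plats dont les pieds s'\'equidistribuent sur les tores appropri\'es (quotients des centralisateurs). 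L'\'etape d'assemblage est ensuite purement combinatoire~: la proposition \ref{P:Hall} (th\'eor\`eme des mariages de Hall) s'applique mot pour mot pour produire un \'etiquetage l\'egal et une involution admissible, donc une surface immerg\'ee dans $M = \Gamma \backslash X$ munie d'une repr\'esentation d'holonomie $\pi_1 S \rightarrow \Gamma$.

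Il reste \`a \'etablir l'injectivit\'e, analogue du th\'eor\`eme \ref{TB}. Le mod\`ele local de la structure est le plan totalement g\'eod\'esique $\H_2 \hookrightarrow X$~; or une sous-vari\'et\'e totalement g\'eod\'esique est isom\'etriquement plong\'ee, de sorte que la distance de $X$ restreinte \`a $\H_2$ co\"{\i}ncide avec la distance hyperbolique. Lorsque $X$ est de rang $1$ -- espace hyperbolique r\'eel, complexe ou quaternionique, ou plan hyperbolique de Cayley -- l'espace $X$ est hyperbolique au sens de Gromov, et l'argument de la d\'emonstration du th\'eor\`eme \ref{TB} -- selon lequel une alternance de \og sauts \fg \ et de segments presque g\'eod\'esiques demeure globalement quasi-g\'eod\'esique -- se transpose sans changement. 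On obtient ainsi la conjecture \ref{CLM} en rang $1$ (Hamenst\"adt).

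La difficult\'e principale appara\^{\i}t en rang sup\'erieur~: l'espace sym\'etrique $X$ n'est plus hyperbolique au sens de Gromov, puisqu'il contient des plats de dimension $\geq 2$, et le principe local-global pour les quasi-g\'eod\'esiques, ressort cach\'e du th\'eor\`eme \ref{TB}, tombe en d\'efaut. Contr\^oler la cambrure de la surface dans les directions transverses au plan de racine devient alors l'obstacle central~: il faudrait remplacer l'hyperbolicit\'e de Gromov par un contr\^ole de type Morse des configurations presque plates, ou organiser l'assemblage de sorte que l'image de l'application d\'eveloppante reste dans un sous-espace quasi-isom\'etriquement plong\'e. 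C'est pr\'ecis\'ement ce verrou que l'approche d\'ecrite ci-dessus ne franchit pas en g\'en\'eral.
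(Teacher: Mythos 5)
Premier point, et le plus important~: l'\'enonc\'e \ref{CLM} figure dans le texte comme une {\it conjecture} -- le rapport n'en donne aucune d\'emonstration, seulement des commentaires sur les obstacles \`a l'extension de la m\'ethode de Kahn et Markovic. Votre texte n'en est pas une non plus~: c'est une esquisse de strat\'egie, honn\^ete, qui se conclut par l'aveu explicite que l'\'etape d\'ecisive manque en rang sup\'erieur. Il n'y a donc pas de preuve \`a comparer, ni dans le papier ni chez vous~; votre proposition ne peut \^etre accept\'ee comme d\'emonstration de \ref{CLM}.

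Second point~: m\^eme comme esquisse, vous localisez mal l'obstruction. Vous placez le verrou uniquement du c\^ot\'e de l'analogue du th\'eor\`eme \ref{TB} (d\'efaut d'hyperbolicit\'e de Gromov en rang sup\'erieur) et affirmez que le m\'elange exponentiel \og doit fournir \fg \ l'analogue du th\'eor\`eme \ref{TA}. Or le texte souligne que la difficult\'e cruciale se situe d\'ej\`a dans \ref{TA}~: l'\'equidistribution des pieds sur les tores $\T_g$ repose sur le fait que le centralisateur d'une isom\'etrie loxodromique poss\`ede un sous-groupe compact {\it connexe} contenant la rotation d'angle $\pi$ autour de l'axe de translation -- c'est ce qui garantit que les futals presque plats s'\'equidistribuent des \og deux c\^ot\'es \fg \ d'une g\'eod\'esique donn\'ee. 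C'est pr\'ecis\'ement ce qui tombe en d\'efaut dans le cas le plus simple apr\`es $\PSL_2 (\C)$, celui d'Ehrenpreis o\`u $\PSL_2 (\R)$ est plong\'e diagonalement dans $\PSL_2 (\R) \times \PSL_2 (\R)$~: le fibr\'e normal \`a une g\'eod\'esique n'y est pas connexe, il peut y avoir plus de futals d'un c\^ot\'e que de l'autre, et Kahn et Markovic doivent corriger ce d\'efaut par un poids dont l'existence est le c{\oe}ur de \cite{KM3}. Sans une telle v\'erification dans $G$ g\'en\'eral, l'\'etape de mariage (proposition \ref{P:Hall}) ne s'applique pas~: rien ne garantit que les mesures de pieds \`a apparier soient $\delta$-proches. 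Enfin, en rang $1$, votre \og se transpose sans changement \fg \ est trop rapide~: pour $\mathrm{SU}(n,1)$ ou $\mathrm{Sp}(n,1)$, les tores pertinents sont des quotients de centralisateurs de dimension sup\'erieure \`a celle du cas $\PSL_2(\C)$, le recollement des pantalons le long des composantes de bord et le contr\^ole de l'holonomie hors du $\SL_2$ de racine demandent un travail substantiel~; invoquer Hamenst\"adt est une citation de travaux post\'erieurs, pas un argument.
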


Il est tentant de chercher \`a \'etendre la m\'ethode de Kahn et Markovic \`a ce cadre et plus g\'en\'eralement, partant d'un plongement $\PSL_2 (\R) \subset G$ correspondant \`a un plongement totalement g\'eod\'esique de $\H_2$ dans $X$, de chercher \`a construire des surfaces immerg\'ees $S \rightarrow \Gamma \backslash X$ dont un relev\'e $\widetilde{S} \rightarrow X$ soit arbitrairement proche de l'image de $\H_2$. Si $X$ est de rang r\'eel \'egal \`a $1$, la d\'emonstration du th\'eor\`eme \ref{TB} s'\'etend naturellement. Le rang sup\'erieur semble plus difficile.
En ce qui concerne le th\'eor\`eme \ref{TA}, notons qu'un aspect crucial de sa d\'emonstration est le fait que le centralisateur d'une isom\'etrie hyperbolique poss\`ede un sous-groupe compact {\it connexe} qui contient la rotation d'angle $\pi$ autour de l'axe de translation; cela permet en effet de s'assurer 
que les futals $(R, \varepsilon)$-plats sont \'equidistribu\'es autour d'une g\'eod\'esique donn\'ee. 

Dans \cite{KM3} Kahn et Markovic consid\`erent cependant le cas o\`u $\PSL_2 (\R)$ est plong\'e diagonalement dans $G=\PSL_2 (\R) \times \PSL_2 (\R)$ et $\Gamma$ est un produit de deux groupes fuchsiens. Ils d\'emontrent le th\'eor\`eme suivant initialement conjectur\'e par Leon Ehrenpreis \cite{Ehrenpreis}.

\begin{theo}
Soient $S$ et $T$ deux surfaces de Riemann compactes connexes de genre au moins $2$. Pour tout $K>1$, il existe des rev\^etements
finis $\widehat{S}$ de $S$ et $\widehat{T}$ de $T$, et un hom\'eomorphisme $f: \widehat{S} \rightarrow \widehat{T}$ tel que $f$ soit $K$-quasi-conforme, c'est-\`a-dire que pour tout $x \in \widehat{S}$, on a~:
$$H_f (x) = \limsup_{r \rightarrow 0} \frac{\sup \{ d( f(z) , f(x)) \; : \; d(x,z) = r \}}{\inf \{ d( f(z) , f(x)) \; : \; d(x,z) = r \}} \leq K.$$ 
\end{theo}

Ils montrent plus pr\'ecisemment qu'\'etant donn\'e une surface hyperbolique $S$ et un r\'eel strictement positif $\varepsilon$, pour tout r\'eel $R$ suffisamment grand, il existe une surface hyperbolique (r\'eelle) $(R,\varepsilon)$-plate immerg\'ee dans $S$. La d\'emonstration est dans un premier temps identique \`a celle du th\'eor\`eme \ref{TA}
mais cette fois le fibr\'e normal \`a une g\'eod\'esique ferm\'ee dans $S$ n'est pas connexe. Il se peut donc
qu'il y ait plus de futals immerg\'es d'un c\^ot\'e de la g\'eod\'esique que de l'autre. Il s'agit alors de
corriger ce d\'efaut par un poids dont l'existence est le c{\oe}ur des arguments de \cite{KM3}, voir \'egalement \cite{KM4} pour un premier r\'esultat sur cette question.

\section*{Appendice~: m\'elange exponentiel du flot des rep\`eres}

Dans cet appendice, $G$ est un groupe de Lie simple r\'eel, connexe, non compacte et de centre trivial dont on fixe une d\'ecomposition d'Iwasawa $G=NAK$ et une d\'ecomposition de Cartan compatible $G=KA^+ K$. On note $\mathfrak{n}$
l'alg\`ebre de Lie de $N$. 

\begin{defi}
1. Une {\rm repr\'esentation unitaire} $\pi$ de $G$ dans un espace de Hilbert (s\'eparable) $\mathcal{H}_{\pi}$ est un morphisme $G \rightarrow U(\mathcal{H}_{\pi})$ tel que pour tout 
$v \in \mathcal{H}_{\pi}$ l'application $G \rightarrow \mathcal{H}_{\pi}$; $g \mapsto \pi (g) v$ soit continue. Si cette application est lisse on dit que $v$ est un {\rm vecteur $C^{\infty}$} de $\pi$; on note $\mathcal{H}_{\pi}^{\infty}$ l'ensemble des vecteurs $C^{\infty}$ de $\pi$. 

2. \'Etant donn\'e deux vecteurs $v, w \in \mathcal{H}_{\pi}$, on appelle {\rm coefficient} de $\pi$ la fonction continue $c_{v,w} :G \rightarrow \C$ d\'efinie par $c_{v,w} : g \mapsto \langle \pi (g) v , w \rangle$. On dit que le coefficient $c_{v,w}$ est {\rm $K$-fini} si les espaces vectoriels engendr\'es par respectivement $\pi (K) \cdot v$ et $\pi (K) \cdot w$ sont de dimension finie.

3. On note $p(\pi)$ la borne inf\'erieure de tous les $p \geq 2$ tels que les coefficients $K$-finis de $\pi$ soient dans $L^p (G)$.

4. Une repr\'esentation unitaire $\sigma$ est {\rm faiblement contenue} dans $\pi$ si tout coefficient de $\sigma$
est limite uniforme sur tout compact de coefficients de $\pi$. On dit que $\pi$ a un {\rm trou spectral} si elle ne contient pas faiblement la repr\'esentation triviale de $G$.
\end{defi}

Le th\'eor\`eme de Howe-Moore affirme que si $\pi$ est une repr\'esentation unitaire de $G$ sans vecteurs invariants alors les coefficients  $g \mapsto c_{v,w} (g)$ de $\pi$ tendent vers $0$ lorsque $g$ tend vers l'infini dans $G$. Michael Cowling \cite{Cowling} montre en fait que si $G$ a la propri\'et\'e $(T)$ il existe un entier $p=p(G) < +\infty$ tel que si $\pi$ est une repr\'esentation unitaire de $G$ sans vecteurs invariants alors $p(\pi) \leq p$, voir aussi \cite[\S 7]{Oh}. Si $G$ est localement isomorphe \`a $\mathrm{SO}(n,1)$ ou $\mathrm{SU} (n,1)$, ce n'est plus vrai mais la classification des repr\'esentations irr\'eductibles unitaires de ces groupes \cite{Hirai,Kraljevic} implique que si $\pi$ est une repr\'esentation unitaire de $G$
qui a un trou spectral, alors $p(\pi) < +\infty$.\footnote{Toute la puissance de la classification des repr\'esentations irr\'eductibles unitaires n'est pas utile ici. Il suffit de consid\'erer la partie de $\pi$ form\'ee des repr\'esentations sph\'eriques.} Le fait suivant est bien connu; on peut en trouver une d\'emonstration dans \cite[Lemma 3]{Bekka}.

\begin{prop} \label{P:sg}
Soit $\Gamma$ un r\'eseau dans $G$. La repr\'esentation r\'eguli\`ere droite $\rho_{\Gamma}^0$ de $G$ dans le sous-espace $L^2_0 (\Gamma \backslash G)$ de $L^2 (\Gamma \backslash G)$ orthogonal
aux fonctions constantes a un trou spectral. 
\end{prop}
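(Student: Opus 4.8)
The plan is to show directly that the trivial representation $1_G$ is not weakly contained in $\rho_\Gamma^0$, i.e. that $\rho_\Gamma^0$ admits no sequence of almost invariant unit vectors. First I would record that $\rho_\Gamma^0$ has no nonzero $G$-invariant vector: by the Howe--Moore theorem the matrix coefficients of $\rho_\Gamma^0$ tend to $0$ at infinity, so the $G$-action on $\Gamma \backslash G$ is mixing, hence ergodic, and the only invariant functions in $L^2(\Gamma \backslash G)$ are the constants, which span the orthogonal complement of $L^2_0 (\Gamma \backslash G)$.

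The key reduction is to pass from the group $G$ to the symmetric space $X = G/K$. Suppose, for contradiction, that $1_G$ were weakly contained in $\rho_\Gamma^0$, and let $(v_n)$ be a sequence of unit vectors in $L^2_0 (\Gamma \backslash G)$ with $\| \rho_\Gamma^0 (g) v_n - v_n \| \to 0$ uniformly on compact subsets of $G$. Averaging over $K$, the vectors $w_n = \int_K \rho_\Gamma^0 (k) v_n \, dk$ are $K$-invariant; since the $v_n$ are almost $K$-invariant one has $\| w_n - v_n \| \to 0$, so for $n$ large the $w_n$ are nonzero and still almost invariant. The $K$-fixed vectors of $L^2_0 (\Gamma \backslash G)$ are exactly $L^2_0 (\Gamma \backslash X)$, the functions of zero mean on $\Gamma \backslash X$. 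After a harmless smoothing (convolving with an approximate identity supported near the identity of $G$) one may assume the $w_n$ smooth, and then for a $K$-fixed vector the Laplace--Beltrami operator $\Delta$ of $\Gamma \backslash X$ satisfies $\langle \Delta w_n , w_n \rangle = \sum_i \| \rho_\Gamma^0 (X_i) w_n \|^2$, where $X_i$ runs over an orthonormal basis of $\mathfrak{p} \subset \mathfrak{g}$; almost invariance in the directions $\exp(t X_i)$ forces $\| \rho_\Gamma^0 (X_i) w_n \| \to 0$, whence $\langle \Delta w_n , w_n \rangle \to 0$.

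It remains to contradict this using the spectral gap of $\Delta$ on $L^2_0 (\Gamma \backslash X)$, which is the classical input. When $\Gamma$ is cocompact, $\Gamma \backslash X$ is a closed manifold, $\Delta$ has discrete spectrum, and the bottom eigenvalue $0$ (the constants) is simple and isolated, so $\langle \Delta \phi , \phi \rangle \geq \lambda_1 \| \phi \|^2$ with $\lambda_1 > 0$ on $L^2_0 (\Gamma \backslash X)$; when $\Gamma$ is non-uniform the essential spectrum of $\Delta$ begins at a positive constant and only finitely many eigenvalues lie below it, so again $0$ is isolated in the spectrum on $L^2_0$. Either way $\langle \Delta w_n , w_n \rangle \geq \lambda_1 > 0$ for all $n$, contradicting $\langle \Delta w_n , w_n \rangle \to 0$. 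Hence $1_G$ is not weakly contained and $\rho_\Gamma^0$ has a spectral gap.

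The main obstacle is precisely the case where $G$ does not have Kazhdan's property $(T)$, i.e. $G$ locally isomorphic to $\mathrm{SO}(n,1)$ or $\mathrm{SU}(n,1)$: when $G$ has $(T)$ the conclusion is immediate, since $1_G$ is then isolated in the entire unitary dual and $\rho_\Gamma^0$ has no invariant vectors. In the rank one non-$(T)$ situation the trivial representation is a limit of spherical complementary series, and the whole difficulty is to prevent the corresponding parameters from accumulating at the trivial representation; this is exactly what the positivity $\lambda_1 > 0$ of the bottom of the spectrum of $\Delta$ on $L^2_0 (\Gamma \backslash X)$ encodes, and it is why the $K$-averaging reduction to the spherical (Laplacian) spectrum is the crucial step.
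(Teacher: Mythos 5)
Your argument is correct, and it is essentially the standard proof lying behind the paper's treatment of this statement: the paper itself gives no proof at all, dismissing the proposition as well known with the citation \cite[Lemma 3]{Bekka}, and the argument in that reference is precisely your reduction --- pass from almost-invariant vectors to $K$-invariant ones (i.e.\ to mean-zero functions on $\Gamma \backslash X$), smooth, identify the Rayleigh quotient $\langle \Delta w , w \rangle = \sum_i \| \rho_{\Gamma}^0 (X_i) w \|^2$ over an orthonormal basis of $\mathfrak{p}$, and contradict $\lambda_1 (\Gamma \backslash X) > 0$. Your smoothing step is the right mechanism and can be made precise by taking the approximate identity $\varphi$ bi-$K$-invariant and writing $\rho_{\Gamma}^0 (X) \rho_{\Gamma}^0 (\varphi) w_n = \int_G (X\varphi)(g) \left( \rho_{\Gamma}^0 (g) w_n - w_n \right) dg$, which tends to $0$ by almost invariance since $X\varphi$ has zero integral. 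Your closing remark correctly locates the difficulty in the non-(T) cases, and the identification of these with $\mathrm{SO}(n,1)$ and $\mathrm{SU}(n,1)$ is legitimate here because the appendix assumes $G$ simple.

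Two caveats. First, your opening appeal to Howe--Moore is both unnecessary and mildly circular (Howe--Moore takes the absence of invariant vectors as hypothesis); that $L^2_0 (\Gamma \backslash G)$ has no invariant vectors is immediate because the right action of $G$ on $\Gamma \backslash G$ is transitive, hence trivially ergodic. Second, be aware that what you call ``the classical input'' is where all the real content sits in the non-uniform case: the isolation of $0$ in the spectrum of $\Delta$ on $L^2 (\Gamma \backslash X)$ is due to Lax and Phillips in rank one, and in general rests on Borel--Garland (finiteness of the discrete spectrum below any bound) together with Langlands's theory of Eisenstein series to bound the continuous spectrum away from zero. Asserting it without reference does not invalidate the proof --- it is a genuine classical theorem --- but your write-up displaces the entire depth of the proposition into that one sentence, exactly as the paper displaces it into the citation of Bekka; a precise reference is needed there for the argument to be complete.
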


 \'Etant donn\'e un \'el\'ement $g=nak \in G$, on pose $H(g) = a$. On appelle {\it fonction d'Harish-Chandra} la fonction $\Xi= \Xi_G : G \rightarrow \R$ d\'efinie par 
$$\Xi (g) = \int_K \rho (H(kg^{-1}))^{-1/2} dk \quad \mbox{o\`u } \rho (a) = \det \- {}_{\mathfrak{n}} (\mathrm{Ad} (a^{-1})).$$
La fonction $\Xi$ d\'ecro\^{\i}t exponentiellement vite le long de $A^+$~: modulo un facteur logarithmique, on a $\Xi (a) \asymp \rho (a)^{-1/2}$. Puisque la mesure de Haar $dg$ est \'egale \`a $\rho (a) dn da dk$, on en d\'eduit en particulier que $\Xi \in L^{2+\varepsilon} (G)$ pour tout r\'eel $\varepsilon >0$.

\begin{exem} Si $G= \PSL_2 (\C)$, on a, pour les choix usuels de groupes $K$, $A$, $N$,
\begin{equation*}
\begin{split}
\Xi (a_r) & = \frac{1}{\pi} \int_{0}^{\pi/2} (e^{-r} \cos^2 \theta + e^{r} \sin^2 \theta )^{-1} \sin (2 \theta ) d \theta \\ 
& = \frac{r}{\pi \sinh r}.
\end{split}
\end{equation*}
\end{exem}

Notons $d$ la dimension de $K$ et fixons une base $\mathcal{B}$ de l'alg\`ebre de Lie $\mathfrak{k}$ de $K$. \'Etant donn\'e une repr\'esentation unitaire
$\pi$ et un vecteur $v \in \mathcal{H}_{\pi}^{\infty}$, on pose 
$$S (v) = \sum_{\mathrm{ord} (D) \leq d+1} ||\pi (D) v ||,$$
o\`u $D$ parcourt l'ensemble des mon\^omes en les \'el\'ements de $\mathcal{B}$ de degr\'e $\leq d+1$ et, si $X_1, \ldots , X_r$ sont des \'el\'ements de $\mathcal{B}$, on a
$\pi (X_1 \cdots X_r)= \pi (X_1) \cdots \pi (X_r)$ et chaque $\pi(X_i)$ op\`ere par d\'erivation.

La proposition suivante -- qui est essentiellement due \`a Michael Cowling, Uffe Haagerup et Roger Howe \cite{CHH} -- appliqu\'ee \`a la repr\'esentation $\rho_{\Gamma}^0$ implique finalement le th\'eor\`eme \ref{ME}. 
\begin{prop}
Soit $\pi$ une repr\'esentation unitaire de $G$ telle que $p(\pi) \leq 2k$ avec $k \in \N^*$. Il existe une constante $C=C(G,k)$ telle que pour tous $v,w \in \mathcal{H}_{\pi}^{\infty}$ et
pour tout $g\in G$, on a~:
\begin{equation} \label{mc}
|\langle \pi (g) v , w \rangle | \leq C  S (v) S (w) \Xi^{1/k} (g).
\end{equation}
\end{prop}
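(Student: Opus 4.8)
The plan is to follow the Cowling--Haagerup--Howe strategy, whose core is the passage from an $L^{2k}$ integrability hypothesis on $K$-finite coefficients to a pointwise decay estimate governed by the Harish-Chandra function $\Xi$. The argument has three movements: a tensor-power reduction to the tempered case, the classical Harish-Chandra majorization for tempered representations, and an elliptic (Sobolev) estimate on $K$ that converts $K$-type dimension factors into the norm $S$.

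First I would reduce to the tempered case by a tensor-power trick. Since $p(\pi)\leq 2k$, every $K$-finite coefficient $c_{v,w}$ of $\pi$ lies in $L^{2k+\varepsilon}(G)$ for all $\varepsilon>0$. Forming the $k$-fold Hilbert tensor power $\sigma=\pi^{\otimes k}$, a $K$-finite coefficient of $\sigma$ is a product $\prod_{i=1}^{k}c_{v_i,w_i}$ of $k$ coefficients of $\pi$, so by H\"older's inequality it lies in $L^{2+\varepsilon}(G)$. Hence the $K$-finite coefficients of $\sigma$ are almost square-integrable, which is precisely the Cowling--Haagerup--Howe characterization of temperedness: $\sigma$ is weakly contained in the regular representation $\lambda_G$ of $G$ on $L^2(G)$.

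The analytic heart is the Harish-Chandra majorization for tempered representations: if $\sigma$ is weakly contained in $\lambda_G$, then for all $K$-finite vectors $\xi,\eta$ one has
$$|\langle \sigma(g)\xi,\eta\rangle|\leq \big(\dim\langle\sigma(K)\xi\rangle\big)^{1/2}\big(\dim\langle\sigma(K)\eta\rangle\big)^{1/2}\,\|\xi\|\,\|\eta\|\,\Xi(g).$$
I would obtain this by comparing coefficients of $\sigma$ with those of $\lambda_G$: the diagonal $K$-bi-invariant coefficient of $\lambda_G$ is, up to normalization, $\Xi$ itself, and weak containment propagates the resulting domination via the Herz majoration principle; the dimension factors come from projecting $\xi$ and $\eta$ onto their finitely many $K$-isotypic components and applying Cauchy--Schwarz over $K$-types, using the exponential decay $\Xi(a)\asymp\rho(a)^{-1/2}$ along $A^+$. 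I expect this step to be the main obstacle, since it carries the full analytic weight of the estimate.

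Finally I would apply this to $\sigma=\pi^{\otimes k}$ with $\xi=v^{\otimes k}$, $\eta=w^{\otimes k}$. Then $\langle\sigma(g)\xi,\eta\rangle=\langle\pi(g)v,w\rangle^{k}$, while $\|\xi\|=\|v\|^{k}$ and the diagonal $K$-action on $v^{\otimes k}$ spans a subspace of dimension at most $(\dim\langle\pi(K)v\rangle)^{k}$; taking $k$-th roots yields, for $K$-finite $v,w$,
$$|\langle\pi(g)v,w\rangle|\leq \big(\dim\langle\pi(K)v\rangle\big)^{1/2}\big(\dim\langle\pi(K)w\rangle\big)^{1/2}\,\|v\|\,\|w\|\,\Xi(g)^{1/k}.$$
To reach arbitrary $C^{\infty}$ vectors I would decompose $v=\sum_{\tau}v_{\tau}$ into $K$-isotypic pieces, sum the previous inequality over pairs of $K$-types (the right-hand side factorizes), and bound $\sum_{\tau}(\dim\tau)^{1/2}\|v_{\tau}\|$. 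Writing $\Omega_K=\sum_{X\in\mathcal{B}}X^{2}$ for the Casimir of $K$ and $\lambda_{\tau}$ for the eigenvalue of $-\Omega_K$ on the $\tau$-component, the Weyl dimension bound gives $\dim\tau\leq c(1+\lambda_{\tau})^{\dim K/2}$, and a Cauchy--Schwarz split inserting powers of $(1+\Omega_K)$ shows that derivatives up to order $d+1=\dim K+1$ make the weight series converge; this bounds the sum by a constant multiple of $S(v)$, and likewise for $w$, producing \eqref{mc} with $C=C(G,k)$.
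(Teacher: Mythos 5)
Your proposal is correct, and its skeleton --- the tensor-power reduction $\sigma=\pi^{\otimes k}$, weak containment of $\sigma$ in the regular representation via the Cowling--Haagerup--Howe criterion (their Theorem 1), a majorization by $\Xi$, then extraction of $k$-th roots on $\xi=v^{\otimes k}$, $\eta=w^{\otimes k}$ --- is exactly the paper's. Where you genuinely diverge is in the passage from temperedness to smooth vectors. The paper never touches $K$-finite vectors or $K$-types: following the reduction in \cite[Thm.~2]{CHH} it passes outright to $\pi=$ the regular representation on $L^2(G)$, replaces $v,w$ by the right-$K$-invariant majorants $\varphi(x)=\sup_{k\in K}|v(xk)|$ and $\psi(x)=\sup_{k\in K}|w(xk)|$, bounds $\|\varphi\|\leq\sqrt{C}\,S(v)$ by the Sobolev lemma on $K$, and concludes with a two-line computation in Iwasawa coordinates (Cauchy--Schwarz in $L^2(NA)$) showing $|\langle\pi(g)\varphi,\psi\rangle|\leq\|\varphi\|\,\|\psi\|\,\Xi(g)$ for positive right-$K$-invariant functions. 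In particular, the step you flag as the main obstacle --- the Harish-Chandra majorization --- becomes elementary once one has reduced to the regular representation; note also that your heuristic that $\Xi$ is ``the diagonal $K$-bi-invariant coefficient of $\lambda_G$'' is not literally correct ($\Xi\notin L^2(G)$, only $\Xi\in L^{2+\varepsilon}(G)$, so $\Xi$ is merely a limit of such coefficients), and the usable fact is precisely the Herz-type inequality above. Your alternative route --- the full CHH inequality with dimension factors for $K$-finite vectors, then $K$-isotypic summation with Casimir weights --- is valid and buys an explicit bound for $K$-finite vectors, at the price of $K$-type bookkeeping; one small slip there: the cyclic span $\langle\pi(K)v_\tau\rangle$ of a vector in the $\tau$-isotypic component can have dimension up to $(\dim\tau)^2$ (its tensor rank is at most $\dim\tau$), so the weight in your series should be $\dim\tau$ rather than $(\dim\tau)^{1/2}$ --- harmless, since the Weyl dimension bound and powers of $1+\Omega_K$ absorb any fixed power of $\dim\tau$, possibly after increasing the number of $K$-derivatives entering $S$.
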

\begin{proof} Quitte \`a remplacer $\pi$ par le produit tensoriel $\pi^{\otimes k}$ on peut supposer que $k=1$; voir \cite[p. 108]{CHH}. Il d\'ecoule alors de \cite[Thm. 1]{CHH} que $\pi$
est faiblement contenue dans dans la repr\'esentation r\'eguli\`ere (droite) $L^2 (G)$. On se ram\`ene alors facilement  \`a d\'emontrer la proposition dans le seul cas o\`u $\pi$
est la repr\'esentation r\'eguli\`ere de $G$ (et $k=1$); voir la d\'emonstration de \cite[Thm. 2]{CHH} pour plus de d\'etails sur cette derni\`ere r\'eduction.

Soient $v$ et $w$ dans  $L^2 (G) \cap C^{\infty} (G)$. Les fonctions $\varphi : x \mapsto \sup_{k \in K} |v (x k)|$ et 
$\psi : x \mapsto \sup_{k \in K} |w (xk)|$ sont positives, $K$-invariantes et $| \langle \pi (g) v , w \rangle | \leq  \int_G \varphi (xg) \psi (x) dx$. Mais il d\'ecoule du lemme de Sobolev qu'il existe une
constante $C$ telle que pour tout $x \in G$, 
$$\varphi (x)^2 \leq C \sum_{\mathrm{ord} (D) \leq d+1}  ||(\pi (D)v)(x \cdot ) ||^2_{L^2 (K)}.$$
En particulier $||\varphi || \leq \sqrt{C} S (v)$ et de m\^eme pour $\psi$. Il nous reste donc \`a v\'erifier que si $\varphi, \psi \in L^2 (G)$
sont des fonctions positives, $K$-invariantes et de norme $1$, alors $| \langle \pi (g) \varphi , \psi \rangle | \leq \Xi (g)$. C'est le calcul d\'etaill\'e p. 106--107 de \cite{CHH} que nous reprenons ici:
\begin{equation*}
\begin{split}
| \langle \pi (g) \varphi , \psi \rangle | & = \int_K \left( \int_{NA} \varphi (na) \psi (nakg^{-1}) \rho (a) dn da \right) dk \\
& \leq || \varphi || \int_K \left( \int_{NA} \psi (naH(kg^{-1}))^2 \rho (a) dn da  \right)^{1/2} dk \\
&  =  ||\varphi || \cdot || \psi || \int_K \rho (H(kg^{-1} ))^{-1/2} dk ,
\end{split}
\end{equation*}
o\`u l'on a utilis\'e l'in\'egalit\'e de Cauchy-Schwarz dans $L^2 (NA)$ et la $K$-invariance de $\varphi$ et $\psi$.
\end{proof}

\medskip

{\it Un grand merci \`a Laurent Clozel, Gilles Courtois, Ruben Dashyan, Bertrand Deroin, Olivier Guichard, Antonin Guilloux, Fr\'ed\'eric Haglund, H\'el\`ene Eynard-Bontemps, Elisha Falbel, Fran\c{c}ois Labourie, 
Fr\'ed\'eric Paulin et Maxime Wolff pour leur aide dans l'\'elaboration de ce texte.}

\bibliography{bibli}

\def\cftil#1{\ifmmode\setbox7\hbox{$\accent"5E#1$}\else
  \setbox7\hbox{\accent"5E#1}\penalty 10000\relax\fi\raise 1\ht7
  \hbox{\lower1.15ex\hbox to 1\wd7{\hss\accent"7E\hss}}\penalty 10000
  \hskip-1\wd7\penalty 10000\box7} \def\cprime{$'$} \def\cprime{$'$}
  \def\polhk#1{\setbox0=\hbox{#1}{\ooalign{\hidewidth
  \lower1.5ex\hbox{`}\hidewidth\crcr\unhbox0}}} \def\cprime{$'$}
  \def\cprime{$'$} \def\polhk#1{\setbox0=\hbox{#1}{\ooalign{\hidewidth
  \lower1.5ex\hbox{`}\hidewidth\crcr\unhbox0}}}
\providecommand{\bysame}{\leavevmode ---\ }
\providecommand{\og}{``}
\providecommand{\fg}{''}
\providecommand{\smfandname}{et}
\providecommand{\smfedsname}{\'eds.}
\providecommand{\smfedname}{\'ed.}
\providecommand{\smfmastersthesisname}{M\'emoire}
\providecommand{\smfphdthesisname}{Th\`ese}
\begin{thebibliography}{10}

\bibitem{AgolSpecial}
{\scshape I.~{Agol}, D.~{Groves} {\normalfont \smfandname} J.~{Manning}} --
  {\og {The virtual Haken conjecture}\fg}, \emph{ArXiv e-prints} (2012),
  math.GT 1204.2810.

\bibitem{ALR}
{\scshape I.~Agol, D.~D. Long {\normalfont \smfandname} A.~W. Reid} -- {\og The
  {B}ianchi groups are separable on geometrically finite subgroups\fg},
  \emph{Ann. of Math. (2)} \textbf{153} (2001), no.~3, p.~599--621.

\bibitem{Agol}
{\scshape I.~Agol} -- {\og Criteria for virtual fibering\fg}, \emph{J. Topol.}
  \textbf{1} (2008), no.~2, p.~269--284.

\bibitem{Bekka}
{\scshape M.~B. Bekka} -- {\og On uniqueness of invariant means\fg},
  \emph{Proc. Amer. Math. Soc.} \textbf{126} (1998), no.~2, p.~507--514.

\bibitem{BW}
{\scshape N.~{Bergeron} {\normalfont \smfandname} D.~T. {Wise}} -- {\og {A
  boundary criterion for cubulation}\fg}, \emph{ArXiv e-prints} (2009), math.GT
  0908.3609 to appear in {A}mer. {J}. {M}ath vol. 134, no. 3, June 2012.

\bibitem{EnsMath}
{\scshape N.~Bergeron} -- {\og Premier nombre de {B}etti et spectre du
  laplacien de certaines vari\'et\'es hyperboliques\fg}, \emph{Enseign. Math.
  (2)} \textbf{46} (2000), no.~1-2, p.~109--137.

\bibitem{Billingsley}
{\scshape P.~Billingsley} -- \emph{Convergence of probability measures}, second
  \smfedname, Wiley Series in Probability and Statistics: Probability and
  Statistics, John Wiley \& Sons Inc., New York, 1999, A Wiley-Interscience
  Publication.

\bibitem{Bowen}
{\scshape L.~{Bowen}} -- {\og {Weak Forms of the Ehrenpreis Conjecture and the
  Surface Subgroup Conjecture}\fg}, \emph{ArXiv Mathematics e-prints} (2004),
  arXiv:math/0411662.

\bibitem{BH}
{\scshape M.~R. Bridson {\normalfont \smfandname} A.~Haefliger} -- \emph{Metric
  spaces of non-positive curvature}, Grundlehren der Mathematischen
  Wissenschaften [Fundamental Principles of Mathematical Sciences], vol. 319,
  Springer-Verlag, Berlin, 1999.

\bibitem{Buser}
{\scshape P.~Buser} -- \emph{Geometry and spectra of compact {R}iemann
  surfaces}, Modern Birkh\"auser Classics, Birkh\"auser Boston Inc., Boston,
  MA, 2010, Reprint of the 1992 edition.

\bibitem{Calegari}
{\scshape D.~Calegari} -- {\og Surface subgroups from homology\fg}, \emph{Geom.
  Topol.} \textbf{12} (2008), no.~4, p.~1995--2007.

\bibitem{CLR}
{\scshape D.~Cooper, D.~D. Long {\normalfont \smfandname} A.~W. Reid} -- {\og
  Essential closed surfaces in bounded {$3$}-manifolds\fg}, \emph{J. Amer.
  Math. Soc.} \textbf{10} (1997), no.~3, p.~553--563.

\bibitem{CHH}
{\scshape M.~Cowling, U.~Haagerup {\normalfont \smfandname} R.~Howe} -- {\og
  Almost {$L^2$} matrix coefficients\fg}, \emph{J. Reine Angew. Math.}
  \textbf{387} (1988), p.~97--110.

\bibitem{Cowling}
{\scshape M.~Cowling} -- {\og Sur les coefficients des repr\'esentations
  unitaires des groupes de {L}ie simples\fg}, Analyse harmonique sur les
  groupes de {L}ie ({S}\'em., {N}ancy-{S}trasbourg 1976--1978), {II}, Lecture
  Notes in Math., vol. 739, Springer, Berlin, 1979, p.~132--178.

\bibitem{Dufour}
{\scshape G.~Dufour} -- {\og Cubulations de vari\'et\'es hyperboliques
  compactes\fg},  (2012), Th\`ese de l'Universit\'e Paris Sud.

\bibitem{Ehrenpreis}
{\scshape L.~Ehrenpreis} -- {\og Cohomology with bounds\fg}, Symposia
  {M}athematica, {V}ol. {IV} ({INDAM}, {R}ome, 1968/69), Academic Press,
  London, 1970, p.~389--395.

\bibitem{Haglund}
{\scshape F.~Haglund} -- {\og Finite index subgroups of graph products\fg},
  \emph{Geom. Dedicata} \textbf{135} (2008), p.~167--209.

\bibitem{HaglundWiseSpecial}
{\scshape F.~Haglund {\normalfont \smfandname} D.~T. Wise} -- {\og Special cube
  complexes\fg}, \emph{Geom. Funct. Anal.} \textbf{17} (2008), no.~5, p.~1
  551--1620.

\bibitem{Hall}
{\scshape P.~Hall} -- {\og On representatives of subsets\fg}, \emph{J. London
  Math. Soc.} \textbf{1} (1935), p.~26--30.

\bibitem{Hempel}
{\scshape J.~Hempel} -- \emph{3-manifolds}, AMS Chelsea Publishing, Providence,
  RI, 2004, Reprint of the 1976 original.

\bibitem{Hirai}
{\scshape T.~Hirai} -- {\og On irreducible representations of the {L}orentz
  group of {$n-{\rm th}$}\ order\fg}, \emph{Proc. Japan Acad.} \textbf{38}
  (1962), p.~258--262.

\bibitem{KM4}
{\scshape J.~{Kahn} {\normalfont \smfandname} V.~{Markovic}} -- {\og {Random
  ideal triangulations and the Weil-Petersson distance between finite degree
  covers of punctured Riemann surfaces}\fg}, \emph{ArXiv e-prints} (2008),
  math.GT 0806.2304.

\bibitem{KM2}
\bysame , {\og {Counting Essential Surfaces in a Closed Hyperbolic
  3-Manifold}\fg}, \emph{ArXiv e-prints} (2010), math.GT 1012.2828.

\bibitem{KM3}
\bysame , {\og {The good pants homology and a proof of the Ehrenpreis
  conjecture}\fg}, \emph{ArXiv e-prints} (2011), math.GT 1101.1330.

\bibitem{KM1}
\bysame , {\og {Immersing almost geodesic surfaces in a closed hyperbolic three
  manifold}\fg}, \emph{Ann. of Math. (2)} \textbf{175} (2012), no.~3,
  p.~1127--1190.

\bibitem{Kourouniotis}
{\scshape C.~Kourouniotis} -- {\og Complex length coordinates for
  quasi-{F}uchsian groups\fg}, \emph{Mathematika} \textbf{41} (1994), no.~1,
  p.~173--188.

\bibitem{Kraljevic}
{\scshape H.~Kraljevi{\'c}} -- {\og Representations of the universal convering
  group of the group {${\rm SU}(n,\,1)$}\fg}, \emph{Glasnik Mat. Ser. III}
  \textbf{8(28)} (1973), p.~23--72.

\bibitem{Lackenby}
{\scshape M.~Lackenby} -- {\og Surface subgroups of {K}leinian groups with
  torsion\fg}, \emph{Invent. Math.} \textbf{179} (2010), no.~1, p.~175--190.

\bibitem{Margulis}
{\scshape G.~A. Margulis} -- {\og Certain applications of ergodic theory to the
  investigation of manifolds of negative curvature\fg}, \emph{Funkcional. Anal.
  i Prilo\v zen.} \textbf{3} (1969), no.~4, p.~89--90.

\bibitem{Masters}
{\scshape J.~D. Masters} -- {\og Thick surfaces in hyperbolic 3-manifolds\fg},
  \emph{Geom. Dedicata} \textbf{119} (2006), p.~17--33.

\bibitem{Moore}
{\scshape C.~C. Moore} -- {\og Exponential decay of correlation coefficients
  for geodesic flows\fg}, Group representations, ergodic theory, operator
  algebras, and mathematical physics ({B}erkeley, {C}alif., 1984), Math. Sci.
  Res. Inst. Publ., vol.~6, Springer, New York, 1987, p.~163--181.

\bibitem{Oh}
{\scshape H.~Oh} -- {\og Uniform pointwise bounds for matrix coefficients of
  unitary representations and applications to {K}azhdan constants\fg},
  \emph{Duke Math. J.} \textbf{113} (2002), no.~1, p.~133--192.

\bibitem{Sageev}
{\scshape M.~Sageev} -- {\og Ends of group pairs and non-positively curved cube
  complexes\fg}, \emph{Proc. London Math. Soc. (3)} \textbf{71} (1995), no.~3,
  p.~585--617.

\bibitem{Scott}
{\scshape P.~Scott} -- {\og Subgroups of surface groups are almost
  geometric\fg}, \emph{J. London Math. Soc. (2)} \textbf{17} (1978), no.~3,
  p.~555--565.

\bibitem{Series}
{\scshape C.~Series} -- {\og An extension of {W}olpert's derivative
  formula\fg}, \emph{Pacific J. Math.} \textbf{197} (2001), no.~1, p.~223--239.

\bibitem{Tan}
{\scshape S.~P. Tan} -- {\og Complex {F}enchel-{N}ielsen coordinates for
  quasi-{F}uchsian structures\fg}, \emph{Internat. J. Math.} \textbf{5} (1994),
  no.~2, p.~239--251.

\bibitem{Thurston}
{\scshape W.~P. Thurston} -- {\og Three-dimensional manifolds, {K}leinian
  groups and hyperbolic geometry\fg}, \emph{Bull. Amer. Math. Soc. (N.S.)}
  \textbf{6} (1982), no.~3, p.~357--381.

\bibitem{Sarik}
{\scshape D.~{{\v S}ari{\'c}}} -- {\og {Complex Fenchel-Nielsen coordinates
  with small imaginary parts}\fg}, \emph{ArXiv e-prints} (2012), math.GT
  1204.5778.

\bibitem{WiseIsraelHierarchy}
{\scshape D.~T. Wise} -- {\og The structure of groups with a quasiconvex
  hierarchy\fg}, p.~1--200, Preprint 2009.

\end{thebibliography}

\bibliographystyle{smfplain}

\end{document}